\theoremstyle{plain}
\newtheorem{thm}{Theorem}[section]
\newtheorem{prop}[thm]{Proposition}
\newtheorem{cor}[thm]{Corollary}
\newtheorem{lem}[thm]{Lemma}
\newtheorem{exe}[thm]{Example}
\newtheorem{facts}[thm]{Facts}
\theoremstyle{definition}
\newtheorem{defi}[thm]{Definition}
\theoremstyle{remark}
\newtheorem{rmk}{Remark}
\begin{document}
	
	\title{Meromorphic Projective Structures,\\ Opers and Monodromy}
	\author{\emph{Titouan Sérandour}\footnote{Université Côte d'Azur, CNRS, Laboratoire J.-A. Dieudonné, Parc Valrose, F-06108 Nice Cedex~2, France \url{titouan.serandour@univ-cotedazur.fr}}}
	\date{September 2023}%
	\maketitle
	
	\begin{abstract}
		The complex projective structures considered is this article are compact curves locally modeled on $\mathbb{CP}^1$. To such a geometric object, modulo marked isomorphism, the monodromy map associates an algebraic one: a representation of its fundamental group into $\operatorname{PGL}(2,\mathbb{C})$, modulo conjugacy. This correspondence is neither surjective nor injective. Nonetheless, it is a local diffeomorphism [Hejhal, 1975]. We generalize this theorem to projective structures admitting poles (without apparent singularity and with fixed residues): the corresponding monodromy map (including Stokes data) is a local biholomorphism.\\
		
		Keywords and phrases: \emph{meromorphic projective structure, monodromy, Stokes phenomenon, quadratic differential, oper, meromorphic linear connection, moduli space, irregular Riemann-Hilbert correspondence, isomonodromic deformation}.\\
		
		Mathematics Subject Classification 2020: 30Fxx, 34M40, 34M45.%

	\end{abstract}
	
	\tableofcontents

\section{Introduction}

The projective structures considered is this article are surfaces locally modeled on open subsets of the Riemann sphere $\mathbb{P}^1$. Since the automorphism group of the latter is equal to the group of Möbius transformations,
this amounts to saying they are Riemann surfaces, but with a constraint stronger than the mere holomorphy of the coordinate changes: precisely, they must be linear fractional transformations.

Let us consider a projective structure $P$. The analytic continuations of an initial projective chart $\varphi_0$ along paths traveling across the surface give rise to a multivalued function, that can be uniformized by passing to the universal covering. We thus obtain a holomorphic immersive function $f$ with values in the Riemann sphere, called developing map, that "globalizes" the initial chart. This developing map is equivariant with respect to the monodromy representation \[\rho_f:\pi_1(P)\longrightarrow\operatorname{Aut}(\mathbb{P}^1)\simeq\operatorname{PGL}(2,\mathbb{C})\] of $\varphi_0$. The latter "globalizes" the projective changes of coordinates.

The developing map $f$, and in turn the associated representation $\rho_f$, depend on the initial projective chart $\varphi_0$. However, the conjugacy class of the monodromy representation does not. In this way, a projective structure determines a conjugacy class of representations of its fundamental group into $\operatorname{PGL}(2,\mathbb{C})$.

Given an oriented connected compact smooth real surface $S$, we generally consider the set of projective structures on $S$ (i.e. inducing a differentiable structure and an orientation compatible with the one of $S$), each element of which defines a representation of the group $\pi_1(S)$, up to conjugacy. This amounts to fix a genus $g$ and an orientation. Two projective structures on $S$ can be isomorphic even if their monodromy representations are not conjugated: in that sense, the monodromy is not an invariant. To remedy this and define a true invariant, we introduce the notion of an isomorphism (or equivalence) of marked projective structure: it is an isomorphism that is moreover isotopic to the identity of $S$. We denote by $\mathcal{P}(S)$ the set of equivalence classes of marked projective structures on $S$. This allows us to define the monodromy map \[\operatorname{Mon}_S:\mathcal{P}(S)\longrightarrow\mathcal{R}(S):=\operatorname{Hom}(\pi_1(S),\operatorname{PGL}(2,\mathbb{C}))/\operatorname{PGL}(2,\mathbb{C}).\]

To a geometric object, this map associate an algebraic one. It builds a bridge between two types of mathematical objects of different natures, and we may hope that adopting the most adequate perspective will make the resolution of some problems easier, that is, by crossing the bridge if only it is possible to do so in both directions. Having this picture in mind, two questions arise naturally:
\begin{enumerate}
	\item \emph{Under which condition an equivalence class of representations in $\mathcal{R}(S)$ arises from a monodromy representation of a complex projective structure on $S$?}
	\item \emph{Are two marked projective structures defining equivalent monodromy representations necessarily equivalent?}
\end{enumerate}

The first question concerns the characterization of the image of the map $\operatorname{Mon}_S$. It has been solved by Gallo, Kapovich and Marden \cite{MR1765706} when $S$ has genus $g\geq 2$. The second question, of whether $\operatorname{Mon}_S$ is one-to-one, has in general a negative answer. Thus, the monodromy representation does not characterize completely the complex projective structure to which it is associated. However, several weaker injectivity results are known. For example, it appeared as early as in the work of Poincaré that for all complex structure $C$ on $S$, the monodromy map $\operatorname{Mon}_S$ is injective in restriction to the set of projective structures compatible with $C$.
Alternatively, one can endow the sets $\mathcal{P}(S)$ and $\mathcal{R}(S)$ with a topology, or even with a real or complex differentiable structure, and ask a local version of the second question (and thus of the first one, in a sense)
The monodromy map is regular, and Hejhal showed in 1975 that if the genus $g$ of $S$ is at least $2$, then it is a local $\mathcal{C}^\infty$-diffeomorphism \cite[Thm 1 p. 20]{MR463429}. A few years later, Earle and Hubbard showed via a different approach that it is a local biholomorphism \cite{MR624807,MR624819}. The genus~$1$ case has to be treated differently, but leads to a similar statement; we refer to \cite[Sec. 1.4]{MR2647972}. Let us mention that a new proof of Hejhal's theorem for genus~$2$ surfaces was given in \cite[Sec. 8.4.2]{MR3941853}.

The main result of our paper (Theorem \ref{thm:local-injectivity}) is a generalization of Hejhal's theorem to meromorphic projective structures.

\paragraph{$(G,X)$-structures.} A complex projective structure can be equivalently defined as a $(G,X)$-structure, for the group $G=\operatorname{Aut}(\mathbb{P}^1)$ and the variety $X=\mathbb{P}^1$. The monodromy map can be defined in the more general setting of $(G,X)$-structures (for other $G$ and $X$) and the arguments of Ehresmann and Thurston \cite[Prop. 5.1]{Thurston} show that it is also a local homeomorphism. See also \cite[Thm. 7.2.1]{Goldman} and \cite[Sec. 7]{MR2827816} and the references therein.

\paragraph{Quadratic differentials.} Projective structures where studied as soon as in the \textsc{xix}\textsuperscript{th} century in relation with Schwarzian equations and their monodromy, and also with the uniformization problem (see \cite{MR3494804}). Schwarzian equations are the homogeneous linear equations of order two of the form
\begin{equation*}
y''+\frac{q(x)}{2}y=0,
\end{equation*}
where $q$ is a meromorphic function over a domain of the complex plane. A brief computation shows that the quotient $\varphi=y_1/y_2$ of two linearly independent solutions of such an equation satisfies
\begin{equation}\label{equation-intro-anglais}
\mathcal{S}(\varphi):=\left(\frac{\varphi''}{\varphi'}\right)'-\frac{1}{2}\left(\frac{\varphi''}{\varphi'}\right)^2=q
\end{equation}
where $\mathcal{S}(\varphi)$ is the Schwarzian derivative of $\varphi$. From the basic properties of the latter, we deduce that two solutions differ by a linear fractional transformation acting on the target: thus, such functions $\varphi$ define a projective atlas.

Those basic properties are the following: first, the Schwarzian derivative of an immersive function vanishes identically if and only if this function is the restriction of a linear fractional transformation. Intuitively, this derivative measures the failure of an immersion to be the restriction of a linear fractional transformation. In addition, the composition rule $\mathcal{S}(f\circ g)=(g')^2\mathcal{S}(f)\circ g+\mathcal{S}(g)$ it satisfies, applied to projective changes of coordinates $g$ (for which $\mathcal{S}(g)\equiv 0$), becomes exactly the change of variable formula for global quadratic differential forms.

In fact, the set of projective structures on a given compact Riemann surface $C$ is endowed with an affine space structure for the vector space of quadratic differential forms on $C$. The projective charts of the translation of $P$ by $\phi$ are defined as follows: if $x$ is a chart of $P$ with respect to which $\phi=\frac{q(x)}{2}dx\otimes dx$, then the charts of $P+\phi$ are the solutions of the equation \eqref{equation-intro-anglais}. On the other hand, the difference between two projective structures on $C$ is measured by a quadratic differential form defined locally as the Schwarzian derivative of the difference between two charts, one for each of those projective structures. It is this relation between projective structures and quadratic differential forms that allows us to endow the moduli space $\mathcal{P}(S)$ with a smooth complex structure (more precisely: with an holomorphic affine bundle structure over the Teichmüller space $\mathcal{T}(S)$, for the vector bundle of quadratic differential forms).

Thus, a projective structure might be seen as a "global scalar differential equation on a manifold", that is, as a connection.

\paragraph{Meromorphic projective structures and opers.} The viewpoint on projective structures that turns out to be the most fruitful in the setting of this paper is the one of opers (as defined by Beilinson and Drinfeld). To a projective structure on $S$, we can associate in a canonical way a $G$-oper on the underlying Riemann surface $C$, for the group $G=\operatorname{PGL}(2,\mathbb{C})$, and conversely. It is a triple $(\pi:Q\rightarrow C,\mathcal{F},\sigma)$ composed of a $\mathbb{P}^1$-bundle $\pi$, a holomorphic foliation $\mathcal{F}$ on $Q$ transverse to the fibers of $\pi$ and a section $\sigma$ of the bundle $\pi$ transverse to the foliation $\mathcal{F}$. Such a foliation is said to be Riccati with respect to $\pi$, because it is induced by a Riccati differential equation in any local trivialization of this $\mathbb{P}^1$-bundle. Riccati foliations play a preeminent role in the theory of holomorphic foliations.

A demonstration of Hejhal's theorem based on this viewpoint was sketched by Loray and Marín \cite{MR2647972}; the ideas implemented here are in fact due to Ehresmann and Thurston. We have completed this demonstration and adapted it to the case of meromorphic projective structures.

Let us mention that a generalization (a weak version) of Hejhal's theorem for $G$-opers (without singularity), valid for more general choices of group $G$, was given by Sanders \cite[Thm. 6.3]{https://doi.org/10.48550/arxiv.1804.04716}.

The relation between projective structures and quadratic differential forms is also essential to the precise definition of projective structures with poles. Let us consider a projective structure $P$ on $C$. A meromorphic projective structure on $C$ can be defined as the translation of $P$ by a meromorphic quadratic differential form $\phi$ on $C$ (away from the poles of the latter). The poles $p_i$ of $P$, as well as their orders $n_i$, are by definition the ones of $\phi$.

To meromorphic projective structures correspond meromorphic $\operatorname{PGL}(2,\mathbb{C})$-opers, that is to say triples $(\pi:Q\rightarrow C,\mathcal{F},\sigma)$ as above, except $(\pi:Q\rightarrow C,\mathcal{F})$ is now a singular Riccati foliation. This correspondence is not bijective, though, in contrast with the non-singular case: two opers not holomorphically equivalent can very well define the same meromorphic projective structure. Nonetheless, there exists a unique minimal $\operatorname{PGL}(2,\mathbb{C})$-oper associated with a meromorphic projective structure without apparent singularity, minimal in the sense that its polar divisor is minimal up to bimeromorphic gauge transformation.\\

Defining the monodromy map is a much more delicate task than in the non-singular case. Let us denote by $\Sigma$ the set of poles of a meromorphic projective structure $P$ on $C$. The monodromy representation of $P$ is defined as the one of the non-singular projective structure induced by $P$ on $C^*=C\smallsetminus\Sigma$. It includes the monodromy originating from the topology of $S$, but also local monodromies, around each singular points. Then comes, for poles of order at least $3$, the Stokes data, forming the so-called generalized monodromy data. This additions are inspired by the theory of meromorphic linear rank $2$ connections, of which Riccati foliations are the projectivized versions. Note that the monodromy representation of a meromorphic projective structure is also the one of the associated Riccati foliation.

In fact, every singular Riccati foliation is the projectivization of meromorphic rank $2$ connection $(E,\nabla)$, that is to say a global linear differential system. Knowing this, we can use the existing works on linear connections and deduce some analogous results regarding Riccati foliation. Notably, it is generally in the context of linear connections that can be found, in the literature, the constructions of moduli spaces and isomonodromic deformations. We keep in mind, though, that in our work no mathematical necessity imposes to speak about linear connections.  In a similar way, a $\operatorname{PGL}(2,\mathbb{C})$-oper lifts to a $\operatorname{GL}(2,\mathbb{C})$-oper, that is to say a triple $(E,\nabla,L)$ composed of a rank $2$ vector bundle on $C$, a meromorphic connection $\nabla$ on $E$ and a line subbundle $L$ of $E$ satisfying a transversality condition.

The sort of "bridge" established by the monodromy map is well-known for linear connections: it is the Riemann-Hilbert correspondence. The study of the monodromy allows a qualitative study of differential equations in the complex domain without having to solve them explicitly, which we can rarely achieve. In the case of meromorphic connections, it is necessary, in order to establish such a bijective correspondence, to include additional data to the monodromy representation. Those are notably Stokes matrices, at poles of orders at least $2$. The analysis of singularities reveals a profound dichotomy in the behavior of solutions -- and in turn in the local analytic classification of equations -- according to the orders of those singularities: equal to $1$ or at least $2$ (this is known as the Stokes phenomenon, discovered by Stokes in the \textsc{xix}\textsuperscript{th} century). We say they are regular (or moderate, or tame) singularities and irregular (or wild) singularities, respectively. The moduli spaces of generalized monodromy data are called wild character varieties.

The same goes for meromorphic projective structures (but for pole orders at most $2$ or at least $3$). Allegretti and Bridgeland constructed a smooth (possibly non-Hausdorff) character variety $\mathcal{X}^*(\mathbb{S},\mathbb{M})$ (using Fock-Goncharov coordinates \cite{MR2233852}) containing generalized monodromy data together with a framing, and showed that the monodromy map
\begin{align}
\operatorname{Mon}_{\mathbb{S},\mathbb{M}}:\mathcal{P}^*(\mathbb{S},\mathbb{M})\longrightarrow \mathcal{X}^*(\mathbb{S},\mathbb{M})
\end{align}
is holomorphic \cite{Allegretti_2020}. Here, $\mathcal{P}^*(\mathbb{S},\mathbb{M})$ denotes the moduli space of (equivalence classes of) meromorphic projective structures marked by $(\mathbb{S},\mathbb{M})$, signed, and without apparent singularity. The marking of a meromorphic projective structure on $S$ by a real bordered surface $\mathbb{S}$ with marked points $\mathbb{M}$ is a marking of the surface obtained by the real blow-up of $S$ at each irregular singularity, with marked points corresponding to regular singular points and Stokes directions on this blown-up surface. The signing encodes a choice of fixed point of the local monodromy at regular singularities, needed in the definition of the monodromy map of Allegretti and Bridgeland (in order to define the framings). Except in a few special cases (always in genus $0$ and with a small number of poles counted with multiplicity), $\mathcal{P}^*(\mathbb{S},\mathbb{M})$ is a smooth variety.

In this paper, we fix the value $\lambda_{-1}^{(i)}$ of the residue at each pole $p_i$ (as well as a signing), even at irregular singularities. The associated moduli space \[\mathcal{P}^\circ(\mathbb{S},\mathbb{M},(\lambda_{-1}^{(i)}))\] is again smooth. In particular, the number of pole $d$ and their orders $(n_i)$ are fixed.\\

Several authors have developed independently some alternative constructions of generalized monodromy data and of wild character varieties for linear connections \cite{MR2250948,MR2649335,MR3126570,boalch2015twisted,MR3383320,MR3802126,MR3932256}. In order to obtain a \emph{smooth} moduli space, it is necessary to impose some good conditions on the monodromy data (stability, non-degeneracy...).

On the other hand, Inaba \cite{MR4545855} constructed a \emph{smooth} moduli space for linear connections. Here too, in reality, it is necessary to restrict to connections satisfying a stability condition. We can see $\operatorname{GL}(2,\mathbb{C})$-opers as a special kind of connections, satisfying a special condition: they admit a transverse line subbundle. We are going to show that this condition implies in particular that they are stable in the sense of Inaba. Now, the moduli space of Inaba (when the curve $C$, the polar divisor and the formal data are fixed) is in bijection with a subset of the wild character variety via the irregular Riemann-Hilbert correspondence. Fixing moreover the trace connexion on the determinant bundle, we deduce a smooth moduli space for $\operatorname{PGL}(2,\mathbb{C})$-opers, and we apply the irregular Riemann-Hilbert correspondence, in its version for singular Riccati foliations. This is how we obtain a smooth wild character variety \[\bar{\mathcal{R}}^*(\mathbb{S},\mathbb{M},(\lambda_{-1}^{(i)})),\] without having to formulate a condition directly on the generalized monodromy data.

Note that the monodromy map for meromorphic projective structures differs from the Riemann-Hilbert map, since the complex structure on $S$ is not fixed and since we consider only connection of a special kind, namely opers.\\

Our main result is stated as follows:

\paragraph{\textcolor{blue}{Theorem} \ref{thm:local-injectivity}}
\emph{Assume that if $g=0$, then $|\mathbb{M}|\geq3$, and that if $g=1$, then $|\mathbb{M}|\geq 1$. Then, the monodromy map \[\operatorname{Mon}_{S,(n_i),(\lambda_{-1}^{(i)})}:\mathcal{P}^\circ(\mathbb{S},\mathbb{M},(\lambda_{-1}^{(i)}))\longrightarrow \bar{\mathcal{R}}^*(\mathbb{S},\mathbb{M},(\lambda_{-1}^{(i)}))\] is a local biholomorphism}.\\

Our theorem generalizes and unifies a number of analogous results that where already known before in some specific cases, and which where obtained through various techniques.
Let us cite the works of Bakken \cite{MR508244} (cf. \cite[p. 198]{MR0486867}) in genus $g=0$, with a unique pole of order $n\geq5$; Iwasaki \cite[Thm. 5.9]{MR1146359} for projective structures admitting a fixed number of apparent singularities; Luo \cite{MR1202134} for the case where all singularities are regular (of order exactly $2$) with an index  $\theta\in\mathbb{C}\smallsetminus\mathbb{Z}$ (indices can vary freely in this set), that is to say with non-trivial and non-parabolic local monodromy; Hussenot Desenonges \cite{MR3956187} for a case where all singularities are regular (of order $2$) with a fixed index $\theta=0\in\mathbb{Z}$ (hence a parabolic, in particular non-trivial, local monodromy); Gupta and Mj \cite{MR4232545} for the case where all singularities are irregular (and the index vary freely in $\mathbb{C}$).

Recently, LeBarron Alley \cite{MR4338230} studied meromorphic cyclic $\operatorname{SL}(n,\mathbb{C})$-opers admitting a unique pole on the Riemann sphere and concluded that the associated monodromy map is a holomorphic immersion if the pole order is a multiple of $n$.

\paragraph{Ingredients of the demonstration.} Beside the works of Allegretti, Bridgeland and Inaba, our demonstration rely on the construction of universal isomonodromic deformations by Heu \cite{MR2667785}. Locally on $\mathcal{P}^\circ(\mathbb{S},\mathbb{M},(\lambda_{-1}^{(i)}))$, we construct an analytic family of meromorphic $\operatorname{PGL}(2,\mathbb{C})$-opers that locally embeds in the universal family of singular Riccati equations derived from Inaba's work. Then, the isomonodromic flow allows us to retract two nearby opers having the same generalized monodromy data one on the other, leading to the local injectivity of the monodromy map. \autoref{thm:local-injectivity} then follows from the holomorphy of the monodromy map, thanks to the open mapping theorem.

\paragraph{About the image of the monodromy map.} A natural and related problem (which is the extension of question~$1$), is to determine the image of the monodromy map of meromorphic projective structures. Let us mention that analogs of the Gallo-Kapovich-Marden theorem (with variable residues) where recently proved in a series of work by Faraco, Gupta and Mj \cite{MR4078090,MR4286048,https://doi.org/10.48550/arxiv.2109.04044} and Nascimento \cite{https://doi.org/10.48550/arxiv.2105.07084}. The case of branched complex projective structures with fixed branch divisor was handled by Le Fils \cite{MR4575871}.

\paragraph{An open question in symplectic geometry.}
The question of whether the monodromy map of non-singular projective structures is a symplectomorphism was addressed by Kawai \cite{MR1386110} and Loustau \cite{MR3352248} (see also \cite{MR3735629} and \cite{https://doi.org/10.48550/arxiv.1804.04716}). Indeed, it is well-known since the work of Goldman \cite{MR762512,MR2094117} that the character variety (i.e. the target of the monodromy map) has a natural symplectic structure. On the other hand, the space of projective structures possesses several cotangent symplectic structures depending on the different Schwarzian parametrizations. Loustau gave a necessary and sufficient condition on the parametrization which ensures that the monodromy map is a symplectomorphism. Is the monodromy map of \emph{meromorphic} projective structures a symplectomorphism? The works of \cite{MR1178029} and \cite{MR3888789} gave a positive answer to this question is some specific cases involving only regular singularities.

\paragraph{Structure of the paper.} This article is organized as follows. The first section is dedicated to the introduction of meromorphic projective structures, while section 2 present their relation to meromorphic $\operatorname{PGL}(2,\mathbb{C})$-opers. In particular, we describe their minimal birational models. In section 3, we show that opers belong to Inaba's smooth moduli space. Section 4 contains the proof of the main theorem.

\paragraph{Acknowledgments.} This work was carried out during my PhD thesis~\cite{serandour:tel-04053917} at the University of Rennes, in the laboratory IRMAR - UMR CNRS 6625; I was supported by the Centre Henri Lebesgue program ANR-11-LABX-0020-0; I finished this article after I integrated the laboratory UMPA - UMR 5669 CNRS, at ENS de Lyon.
I would like to express all my gratitude to my PhD advisor, Frank Loray, for its guidance and for all the enriching discussions we had. %
I would also like to thank Tom Bridgeland and Sorin Dumitrescu, who reported my thesis. Their careful reading of my manuscript has led to several improvements.
Finally, let me thank Professor Michi-aki Inaba for answering in great details a question about one of its papers, thus clarifying a result I use in the present article.

\section{Meromorphic projective structures}

We start this first section with an introduction to complex projective structures without poles and their monodromy map. The Reader who is not yet familiar with the subject might profitably consult the surveys \cite{MR2497780,MR2647972} and the book \cite{MR3494804}. In the first section, we explain in particular that the set of projective structures on a fixed complex curve is an affine space for the vector space of quadratic differentials. This is fundamental because it gives rise to a structure of a complex manifold on the moduli space of projective structures of genus $g$. Moreover, it allows us to define meromorphic projective structures. We state Hejhal's theorem, and in the last section we recall the definition of the moduli space of marked meromorphic projective structures of genus $g$ with poles of prescribed (in a somewhat loose sense) orders, constructed by Allegretti and Bridgeland in~\cite{Allegretti_2020}.

\subsection{Complex projective structures (without pole) and their monodromy map}\label{sec:cplx-proj-struct}

Let $S$ be a connected $\mathcal{C}^\infty$-smooth oriented real surface.

\begin{defi}\label{complexprojectivestructure}
	A \emph{complex projective structure on S} is a maximal atlas of charts mapping open sets in $S$ into open sets in the complex projective line $\mathbb{P}^1$ and such that the transition maps are \emph{restrictions} of elements in the group of automorphisms of $\mathbb{P}^1$ as a complex manifold (which we canonically identify with $\operatorname{PGL}(2,\mathbb{C})$).
	
	(It is always implicitly assumed that both the $\mathcal{C}^\infty$-smooth structure and the orientation induced by this complex structure are the ones initially fixed on~$S$).
\end{defi}

This is also the definition of a \emph{$(G,X)$-structure on $S$}, with $G=\operatorname{PGL}(2,\mathbb{C})$ and $X=\mathbb{P}^1$.

Two projective structures on $S$ are said to be \emph{isomorphic} (resp. \emph{marked isomorphic}) if there exists an orientation-preserving diffeomorphism of $S$ (resp. a diffeomorphism of $S$ homotopic to the identity\footnote{Or isotopic, this is equivalent. Moreover, the orientation is automatically preserved.}) pulling back any projective chart of one of them to a projective chart of the other. We will denote by $\mathcal{P}(S)$ the set of marked isomorphism classes of projective structures on $S$. The marking will be needed in the definition of the monodromy map.

Every projective structure on $S$ induces a complex structure on $S$, hence there is a forgetful map $\mathcal{P}(S)\rightarrow\mathcal{T}(S)$ (where $\mathcal{T}(S)$ denotes the Teichmüller space of $S$) and we may as well speak of a projective structure on a complex curve. Every complex structure is induced by a projective one --meaning that the forgetful map is surjective-- as shown by the following example.

\begin{exe} \label{standard-proj-structure}
	Let $U$ be an open subset of the complex projective line and assume that it is invariant under a free and properly discontinuous action of a group of Möbius transformations $\Gamma$. Then, the local inverses of the associated quotient map $U\rightarrow U/\Gamma$, which is a covering map, form a projective atlas on the quotient space.

	The Poincaré-Koebe uniformization theorem ensures that any connected smooth complex curve $C$ is biholomorphic to such a quotient space. The projective structure induced on the latter can be pulled back to the curve $C$, giving a projective structure compatible with the complex one. The resulting structure is called the \emph{canonical projective structure} on $C$.

	The standard atlas on $\mathbb{P}^1$ is also projective, and all projective atlas on $\mathbb{P}^1$ are compatible with it. However, there is an infinite number of distinct projective structures on~$C$ as soon as it is different from $\mathbb{P}^1$.
\end{exe}

Let us denote by $u:\tilde{S}\longrightarrow S$ the universal cover of $S$, and let $C$ be a complex structure on $S$.

\begin{prop}[See Lem. 1 in \cite{MR624819}]\label{developingmap}
	Given a projective structure $P$ on $C$:
	\begin{enumerate}
		\item There exists an analytic map \[f:\tilde{C}\longrightarrow\mathbb{P}^1\] such that on each contractible open set $U\subset C$ on which the composition $f\circ u^{-1}$ is defined (here, $u^{-1}$ denote any local inverse for $u$), it is a projective chart. Any other such map is of the form $\sigma\circ f$ for some $\sigma\in\operatorname{PGL}(2,\mathbb{C})$. We say $f$ is \emph{a developing map} for the projective structure $P$.
		\item To every developing map $f$ there corresponds a unique morphism \[\rho_f:\pi_1(C)\longrightarrow\operatorname{PGL}(2,\mathbb{C})\] (called \emph{the monodromy representation}) such that $\rho_f(\gamma)\circ f = f\circ\gamma$ (\emph{equivariance}) and $\rho_{\sigma\circ f}=\sigma\circ \rho_f\circ\sigma^{-1}$.
	\end{enumerate}
\end{prop}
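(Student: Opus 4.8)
The plan is to build $f$ by analytically continuing a single projective chart over the simply connected cover $\tilde C$, and then to deduce parts 1 and 2 essentially formally.

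For part 1, I would fix $\tilde x_0\in\tilde C$ with $u(\tilde x_0)=x_0$ and a projective chart $\varphi_0$ of $P$ defined on a contractible neighborhood $V_0\ni x_0$; composing $\varphi_0$ with the local inverse of $u$ sending $V_0$ to the sheet through $\tilde x_0$ gives the germ of a candidate $f$ at $\tilde x_0$. To extend this germ to an arbitrary $\tilde x\in\tilde C$, pick a path from $\tilde x_0$ to $\tilde x$, cover its image in $C$ by finitely many chart domains $V_0,V_1,\dots,V_k$ with consecutive ones overlapping, and inductively replace each $\varphi_{j+1}$ by $\sigma_j\circ\varphi_{j+1}$ for a suitable $\sigma_j\in\operatorname{PGL}(2,\mathbb{C})$ so that the charts agree on $V_j\cap V_{j+1}$; this is possible precisely because the transition maps of $P$ lie in $\operatorname{PGL}(2,\mathbb{C})$. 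Composing with the matching local inverses of $u$ yields a chart germ at $\tilde x$. The monodromy theorem applies: the set of points reachable with a germ independent of the chosen path is open and closed, hence all of $\tilde C$ by connectedness and simple connectivity, so $f$ is well defined; being locally a projective chart, it is holomorphic and an immersion. For uniqueness, if $f'$ is another developing map, then on any contractible $U$ the composite $(f'\circ u^{-1})\circ(f\circ u^{-1})^{-1}=f'\circ f^{-1}$ is a transition map of $P$, hence an element of $\operatorname{PGL}(2,\mathbb{C})$; two Möbius transformations agreeing on an open set coincide, so this element is locally constant, hence a single $\sigma\in\operatorname{PGL}(2,\mathbb{C})$ on the connected set $\tilde C$, and $f'=\sigma\circ f$.

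For part 2, identify $\pi_1(C)$ with the deck group of $u$. For $\gamma\in\pi_1(C)$ one has $u\circ\gamma=u$, so for any contractible $U$ and local inverse $u^{-1}$ the map $\gamma\circ u^{-1}$ is again a local inverse of $u$ over $U$; hence $(f\circ\gamma)\circ u^{-1}=f\circ(\gamma\circ u^{-1})$ is again a projective chart of $P$, i.e. $f\circ\gamma$ is a developing map. By the uniqueness in part 1 there is a unique $\rho_f(\gamma)\in\operatorname{PGL}(2,\mathbb{C})$ with $f\circ\gamma=\rho_f(\gamma)\circ f$, and it is genuinely determined since $f$ has open image. The identities $f\circ(\gamma_1\gamma_2)=\rho_f(\gamma_1)\circ\rho_f(\gamma_2)\circ f$ and $(\sigma\circ f)\circ\gamma=(\sigma\rho_f(\gamma)\sigma^{-1})\circ(\sigma\circ f)$ then force, by this same uniqueness, that $\rho_f$ is a homomorphism and that $\rho_{\sigma\circ f}=\sigma\,\rho_f\,\sigma^{-1}$.

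The only genuinely non-formal point is the well-definedness of the analytic continuation in part 1, i.e. the verification that the chart germ obtained at the endpoint is independent of the path. This is where care is needed: one checks that the gluing transformations $\sigma_j$ depend locally constantly on the data, so that a homotopy of the path leaves the terminal germ unchanged, and then invokes simple connectivity of $\tilde C$. Everything after this step is bookkeeping with $\operatorname{PGL}(2,\mathbb{C})$.
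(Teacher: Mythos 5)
Your proof is correct and follows exactly the route the paper indicates: the paper gives no written proof of this proposition, only the citation to Earle--Hubbard and the one-line remark that the developing map is obtained "by analytic continuation of a projective chart by other projective charts", which is precisely the construction you carry out, followed by the standard formal deduction of equivariance and of the conjugation rule from the uniqueness clause of part~1. The only point deserving the care you already flag is the path-independence of the continuation (monodromy theorem on the simply connected $\tilde{C}$, with overlaps $V_j\cap V_{j+1}$ taken connected along the path); everything else is bookkeeping, as you say.
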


\begin{figure}[H]
	\centering
	\def\svgwidth{0.6\textwidth}
	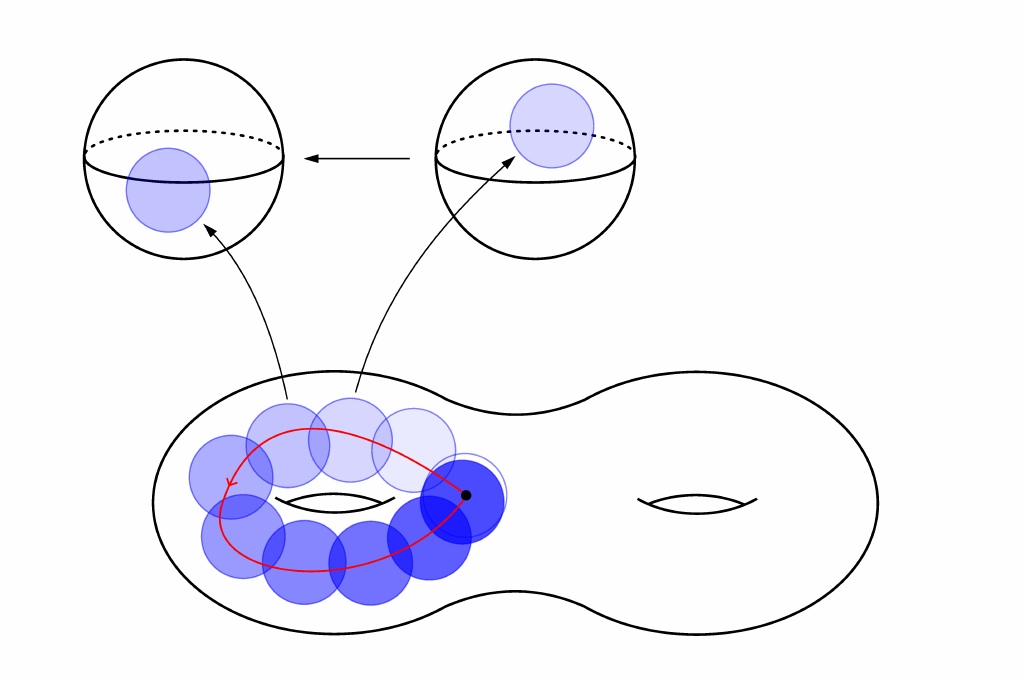
	\caption{If the $\varphi_{ij}:=\varphi_i\circ\varphi_j^{-1}$ denote the change of coordinate charts of $P$ as well as the associated Möbius transformations, then the monodromy along the loop $\gamma$ covered by the charts $U_0,\dots,U_n$ is obtained by composition of the $\varphi_{ij}$:  $\rho_f(\gamma)=\varphi_{n,(n-1)}\circ\cdots\circ\varphi_{2,1}\circ\varphi_{1,0}$.}
\end{figure}

A developing map can be constructed by analytic continuation of a projective chart by other projective charts. In this sense, it globalizes a projective chart, whereas the monodromy representation globalizes the transition maps. A projective structure on $S$ can be described by the data of a $\mathcal{C}^\infty$ immersion $f:\tilde{S}\rightarrow\mathbb{P}^1$ together with a representation $\rho\in\operatorname{Hom}(\pi_1(S),\operatorname{PGL}(2,\mathbb{C}))$ with respect to which it is equivariant: a so-called \emph{development-holonomy pair}\footnote{Here, "holonomy" and "monodromy" are synonyms.} $(f,\rho)$.

\begin{defi}\label{monodromymap}
	Let us denote $\mathcal{R}(S)$ the set of $\operatorname{PGL}(2,\mathbb{C})$-conjugacy classes of representations of the fundamental group of $S$ in $\operatorname{PGL}(2,\mathbb{C})$. Thanks to \autoref{developingmap}, to a marked isomorphism class of projective structures we can associate the conjugacy class of the monodromy representation of one of its developing maps. This defines the so-called \emph{monodromy map} \[\operatorname{Mon}_S:\mathcal{P}(S)\longrightarrow\mathcal{R}(S).\]
\end{defi}

Note that the marking is essential to define this map; it is also key to the smoothness of the Teichmüller space, hence of the moduli space $\mathcal{P}(S)$, that we will discuss latter.

\begin{thm}[Poincaré] \label{Poincare-thm}
	The monodromy map $Mon_S$ is injective in restriction to each fiber of the forgetful map $\mathcal{P}(S)\rightarrow\mathcal{T}(S)$.
\end{thm}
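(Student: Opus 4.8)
The plan is to work on a fixed complex curve $C$ (a point of $\mathcal{T}(S)$) and to show that two projective structures $P_0, P_1$ compatible with $C$ having conjugate monodromy must coincide. The key is that the set of projective structures compatible with $C$ is an affine space over $H^0(C, K_C^{\otimes 2})$ (the space of holomorphic quadratic differentials), as recalled in the introduction: choosing $P_0$ as base point, the structure $P_1$ corresponds to a quadratic differential $\phi = P_1 - P_0$, and I want to prove $\phi = 0$. After possibly conjugating by an element of $\operatorname{PGL}(2,\mathbb{C})$, I may assume the developing maps $f_0, f_1 : \tilde C \to \mathbb{P}^1$ have \emph{the same} monodromy representation $\rho : \pi_1(C) \to \operatorname{PGL}(2,\mathbb{C})$.

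First I would pass to Schwarzian derivatives relative to a local chart. On any contractible open set of $C$ with holomorphic coordinate $z$, the developing maps $f_0, f_1$ (composed with the local inverse of $u$) are immersions into $\mathbb{P}^1$, and by the defining property of the affine structure one has $\mathcal{S}(f_1) - \mathcal{S}(f_0) = q(z)$, where $\phi = \tfrac{1}{2} q(z)\, dz \otimes dz$ locally. Because $f_0$ and $f_1$ are $\rho$-equivariant for the \emph{same} $\rho$ and because the Schwarzian of a Möbius transformation vanishes, the composition rule $\mathcal{S}(g \circ h) = (h')^2 (\mathcal{S}(g)\circ h) + \mathcal{S}(h)$ shows that $\mathcal{S}(f_0)$ and $\mathcal{S}(f_1)$ transform as quadratic differentials under deck transformations; hence their difference descends to a well-defined holomorphic quadratic differential on $C$, which is precisely $\phi$. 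So far this is just unwinding the affine structure; the content is still to come.

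The main step — and the main obstacle — is to show $\phi \equiv 0$. Here I would exploit that $f_0$ and $f_1$ share the monodromy $\rho$ and run a rigidity/analytic-continuation argument. Consider the holomorphic map $F = (f_0, f_1) : \tilde C \to \mathbb{P}^1 \times \mathbb{P}^1$; it is equivariant for the diagonal action of $\rho(\pi_1(C))$ on $\mathbb{P}^1 \times \mathbb{P}^1$. The classical route (going back to Poincaré, and the one used by Earle–Hubbard and in the surveys cited) is: if $\phi \neq 0$, one obtains two genuinely different equivariant immersions with the same holonomy group $\Gamma = \rho(\pi_1(C))$, and one derives a contradiction from the fact that $\Gamma$ preserves the graph-type relation forced by $F$ — either the image of $F$ is the graph of a $\Gamma$-equivariant biholomorphism of $\mathbb{P}^1$, which would have to be a single Möbius map (forcing $\mathcal{S}(f_0) = \mathcal{S}(f_1)$, i.e. $\phi = 0$), or it is an honest curve/surface in $\mathbb{P}^1 \times \mathbb{P}^1$ invariant under $\Gamma$, and one argues this is impossible because $\Gamma$ is large enough (e.g. non-elementary / contains a loxodromic element with two distinct fixed points, using that $C$ has genus $\ge 1$ — in genus $0$ the statement is vacuous since $\mathcal{P}(\mathbb{P}^1)$ is a single point). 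Concretely: an invariant analytic subset of $\mathbb{P}^1 \times \mathbb{P}^1$ under a loxodromic element is constrained to pass through the four product fixed points, and iterating several independent elements of $\Gamma$ pins the set down to a graph. Delicacies to handle: the case where $\rho$ has elementary (e.g. abelian, or finite) image, and the fact that $f_0, f_1$ need only be immersions, not embeddings, so "graph" must be interpreted at the level of germs and then propagated by analytic continuation over all of $\tilde C$.

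Finally, once $\mathcal{S}(f_0) = \mathcal{S}(f_1)$ on $\tilde C$, the vanishing of the Schwarzian of $f_1 \circ f_0^{-1}$ (locally) gives that $f_1 = \sigma \circ f_0$ for a single $\sigma \in \operatorname{PGL}(2,\mathbb{C})$; equivariance for the same $\rho$ forces $\sigma$ to commute with $\Gamma$, so by non-elementarity (again using $g \ge 1$ and the trivial genus-$0$ case) $\sigma = \mathrm{id}$, whence $f_0 = f_1$ and $P_0 = P_1$ as marked projective structures on $C$. This proves that $\operatorname{Mon}_S$ is injective on each fiber of $\mathcal{P}(S) \to \mathcal{T}(S)$. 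I expect the genuinely hard part to be the rigidity argument of the third paragraph — ruling out a nontrivial $\Gamma$-invariant correspondence in $\mathbb{P}^1 \times \mathbb{P}^1$ — and the careful treatment of elementary monodromy, whereas everything else is formal manipulation of the Schwarzian and the affine structure.
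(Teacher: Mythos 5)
The paper does not actually prove this theorem: it only points to \cite{MR2647972} for ``a short proof'', and that proof is the intersection-theoretic one which the paper reuses later (see Facts~\ref{P1-bundle-facts} and Remark~\ref{rmk:injectivity}). Namely, a projective structure on $C$ with holonomy $\rho$ is the same thing as a section $\sigma$ of the flat bundle $Q_\rho=\tilde C\times_\rho\mathbb{P}^1$ transverse to the horizontal foliation, and transversality forces $\sigma\cdot\sigma=2-2g$. For $g\geq 2$ this is negative, and a $\mathbb{P}^1$-bundle admits at most one section of negative self-intersection (two distinct sections satisfy $\sigma_0\cdot\sigma_0+\sigma_1\cdot\sigma_1=2\,\sigma_0\cdot\sigma_1\geq 0$), so the two structures coincide; genus $0$ is trivial and genus $1$ is handled separately via affine structures. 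The whole content is thus a degree count coming from the compactness of $C$.

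Your first two paragraphs (reduction to showing the holomorphic quadratic differential $\phi=P_1-P_0$ vanishes) and your last one are fine --- in fact once you know $f_1=\sigma\circ f_0$ for a single M\"obius map $\sigma$ you are already done, since developing maps are only defined up to post-composition, so the centralizer discussion is superfluous. The genuine gap is the central rigidity step, which you flag as the hard part and do not carry out, and which I do not think can be carried out as described. The image of $F=(f_0,f_1)$ is an immersed, in general non-closed, holomorphic curve in $\mathbb{P}^1\times\mathbb{P}^1$, so your dichotomy ``graph of an equivariant biholomorphism'' versus ``invariant analytic curve'' omits the generic case where the closure of the image is not analytic at all. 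More seriously, the holonomy of a projective structure need not be discrete: for generic structures $\Gamma=\rho(\pi_1(C))$ is dense in $\operatorname{PGL}(2,\mathbb{C})$, in which case the closure of the $\Gamma$-orbit of a generic point of $\mathbb{P}^1\times\mathbb{P}^1$ is everything, and no contradiction can be extracted from invariance of the closure of $\operatorname{im}(F)$; the loxodromic fixed-point bookkeeping you propose then pins down nothing. Finally, in genus $1$ the holonomy is always abelian, so the ``elementary image'' case you defer to a delicacy is the entire statement there. The missing ingredient is precisely the global, compactness-based count (tangency or self-intersection of sections) that the cited proof supplies; purely local Schwarzian manipulations plus dynamics of $\Gamma$ on $\mathbb{P}^1\times\mathbb{P}^1$ do not suffice.
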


A short proof of this fact is given in \cite{MR2647972}.

\paragraph{Quadratic differentials and affine structure.} Let $C$ denote a (connected) smooth curve (i.e. a Riemann surface). We have seen that the set of complex projective structures over $C$ is non-empty (cf. \autoref{standard-proj-structure}). We can expect it to be large, since being a Möbius transformation is a much stronger condition than being holomorphic. In fact, it can be endowed with an affine space structure for the vector space $H^0(C,(T^*C)^{\otimes 2})$ of global holomorphic quadratic differentials (see \cite[Lem. 4]{MR624819} or \cite[Cor. 2]{MR207978}). We recall bellow this affine structure and introduce some notations. If $C$ is compact, the Riemann-Roch theorem implies that the above vector space is of dimension $3g-3$ (resp. $1$, $0$) if $C$ is of genus $g\geq2$ (resp. $g=1,0$) (see, for example, \cite[Cor. 5.4.2]{MR2247485}).

Let $f$ be a locally injective holomorphic function of one complex variable. The \emph{Schwarzian derivative of $f$} is \[\mathcal{S}(f)=\left(\frac{f''}{f'}\right)'-\frac{1}{2}\left(\frac{f''}{f'}\right)^2.\]
Now let $f,g$ two locally injective holomorphic maps between open sets of the complex plane. The key properties of the Schwarzian derivative are
\begin{itemize}\label{schwarzian-basic-property}
	\item $\mathcal{S}(f\circ g)=(g')^2\mathcal{S}(f)\circ g+\mathcal{S}(g)$,
	\item $\mathcal{S}(g)\equiv 0$ if and only if $g$ is the restriction of a Möbius transformation, i.e. $g(x)=\frac{ax+b}{cx+d}$, $(a,b,c,d)\in\mathbb{C}^4$, $ad-bc\neq 0$.
\end{itemize}
The Schwarzian derivative thus depends on the coordinate it is computed, so we usually specify it by a subscript: $\mathcal{S}_x(f)$.

Let $P_1$ and $P_2$ be two projective structures on $C$, with local coordinates $x_1$ and $x_2$, respectively, and denote\footnote{There is a slight abuse here: it is understood that projective charts take there values in $\mathbb{C}$ and that transition maps are invertible linear fractional functions. This is an alternative definition of a projective atlas.} $\psi=x_2\circ x_1^{-1}$. Then, we define $\phi:=P_2-P_1$ locally as \[\phi:=\frac{\mathcal{S}_{x_1}(\psi)}{2}dx_1^{\otimes 2},\] where $\mathcal{S}_{x_1}(\psi)$ denotes the Schwarzian derivative of $\psi$ with respect to the variable $x_1$. Those local expressions glue together by virtue of the basic properties of the Schwarzian derivative, defining a global holomorphic quadratic differential. Conversely, given a projective structure $P_1$ and a holomorphic quadratic differential $\phi$ written locally as $\phi=\frac{q(x_1)}{2}dx_1^{\otimes 2}$, the charts of $P_2:=P_1+\phi$ are defined to be the solutions of the equation
\begin{equation}\label{schwarzian-equation}
\mathcal{S}_{x_1}(\varphi)=q
\end{equation}
in $\varphi$. Furthermore, according to a theorem of Schwarz, it is equivalent to define projective charts as quotients $\varphi=y_1/y_2$ where $y_1$ and $y_2$ range over all independent solutions of the equation
\begin{equation}\label{second-order-reduced-scalar-eq}
y''+\frac{q(x_1)}{2}y=0.
\end{equation}
Thus, a projective structure may be interpreted as a global generalization of such a second order homogeneous linear scalar equation. For each chart of $C$, we get a coefficient $q$ and their collection forms a \emph{projective Cartan connection} (see \cite{Na2019}). On the intersections, the corresponding second order equations are \emph{projectively equivalent}, in the sense that their sets of quotients of solutions coincide (see \cite[Prop. VIII.3.8]{MR3494804}).

\begin{rmk}
	An equation of the form \eqref{second-order-reduced-scalar-eq} but with opposite sign before $q/2$ is sometimes used in the literature, e.g. in \cite{Allegretti_2020}. Let us apply the projective change of coordinate $\tilde{x_1}=ix_1$ to equation \eqref{second-order-reduced-scalar-eq}. We obtain an equation projectively equivalent to $y''-\frac{q(\tilde{x_1})}{2}y=0$ (see \cite[Prop. VIII.3.8]{MR3494804}).
\end{rmk}

The above affine structure endows the set of projective structures on a fixed curve $C$ with a structure of a complex manifold. The aim of the following paragraph is to describe a natural smooth complex structure on the whole $\mathcal{P}(S)$, i.e. letting the complex structure on $S$ vary.

\paragraph{The moduli space $\mathcal{P}(X/B)$.} For any holomorphic family $f:X\rightarrow B$ of compact smooth curves, let us denote \[\mathcal{P}(X/B):=\{(t,P):t\in B\textmd{ and $P$ is a projective structure on $C_t:=f^{-1}(t)$}\}.\]
Most of the time, there will be no ambiguity regarding the family $f$ to which this notation refers. There is a holomorphic vector bundle \[\mathcal{Q}(X/B)\rightarrow B,\] defined as $\mathcal{Q}(X/B)=f_*((T^*_{X/B})^{\otimes 2})$ (where $T_{X/B}$ stands for the vertical tangent bundle of $X$, relative to $f$), whose fiber over a point $t\in B$ is the vector space of quadratic differential forms $\mathcal{Q}(X/B)_t=H^0(C_t,(T^*C_t)^{\otimes 2})$.

A choice of a section $s$ of the map $\operatorname{pr}_1:\mathcal{P}(X/B)\rightarrow B$ furnishes a "choice of origin" in each fibers, which in turn provides an identification
\begin{align*}
\mathcal{P}(X/B)&\longrightarrow\mathcal{Q}(X/B)\\
(t,P)&\longmapsto(t,P-\operatorname{pr}_2(s(t)))
\end{align*}
(here, $\operatorname{pr}_i$ denotes the restriction to $\mathcal{P}(X/B)$ of the projection on the $i$-th factor). Two such sections $s_1$ and $s_2$ will induce the same complex structure on $\mathcal{P}(X/B)$ if and only if their difference $s_2-s_1$ is a holomorphic section of $\mathcal{Q}(X/B)$. Hubbard showed there is a natural complex structure on $\mathcal{P}(X/B)$, though, which is the only one such that firstly the map $\mathcal{P}(X/B)\rightarrow B$ is holomorphic and secondly the \emph{analytic families of projective structures} given by its holomorphic sections are induced by $\emph{relative projective structures}$ on $f$ \cite[Prop. 1]{MR624819}. The map $\mathcal{P}(X/B)\rightarrow B$ is a holomorphic affine bundle for the vector bundle $\mathcal{Q}(X/B)\rightarrow B$.

\begin{defi}
	A \emph{relative projective atlas} on a holomorphic family $f:X\rightarrow B$ of (compact or not) smooth curves is an atlas $(U_i,\varphi_i)$ for $X$ such that for each $t\in B$, the restriction $(U_i\cap C_t,\varphi_{i|C_t})$ (where the range of $\varphi_{i|C_t}$ is understood to be appropriately restricted as well) forms a projective atlas on $C_t$. A \emph{relative projective structure} is a maximal relative projective atlas.
\end{defi}

\begin{exe}\label{family-genus-1}
	The Teichmüller space of genus $1$ curves is isomorphic to the upper half-plane $\mathbb{H}$. Consider the first projection $p_1:\mathbb{H}\times\mathbb{C}\rightarrow\mathbb{H}$. A relative projective atlas on the holomorphic family \[f:(\mathbb{H}\times\mathbb{C})/\braket{(\tau,z+1),(\tau,z+\tau)}\longrightarrow\mathbb{H}\] of elliptic curves induced by $p_1$ is given by local inverses of the quotient map \[\mathbb{H}\times\mathbb{C}\rightarrow (\mathbb{H}\times\mathbb{C})/\braket{(\tau,z+1),(\tau,z+\tau)},\] composed with the projection on the second factor $\mathbb{C}$, regarded as embedded in $\mathbb{P}^1$.
\end{exe}

The aforementioned natural complex structure on $\mathcal{P}(X/B)$ makes the projection $\mathcal{P}(X/B)\rightarrow B$ into a holomorphic affine bundle for the vector bundle $\mathcal{Q}(X/B)$, with the action $(t,\phi)\cdot(t,P)=(t,P+\phi)$. Note that the key ingredient in Hubbard's proof is the existence, at least locally over the base $B$, of a relative projective structure. It is worth mentioning that not every holomorphic family of curves carry a relative projective structure\footnote{In contrast, the constant rank theorem provides any holomorphic families of curves with a relative complex atlas.}. For example, as showed by Zhao in \cite{Zhao}, a non-isotrivial holomorphic family of compact curves over a compact curve $B$ cannot support any relative projective structures. Hubbard verified the existence of a relative projective structure on a family of genus $g\geq 2$ curves under the assumption that the base $B$ is Stein.

Assume that $B=\mathcal{T}(S)$ is the Teichmüller space of marked curves (i.e. marked Riemann surfaces) on the compact smooth surface $S$, and $f:X\rightarrow B$ is the universal holomorphic family of curves over $B$. Then, the bundle $\mathcal{Q}(X/B)\rightarrow B$ is trivial, because the Teichmüller space is contractible and Stein. Moreover, in this case we can reinterpret elements $(t,P)\in\mathcal{P}(X/B)$ as follow: $P$ is a marked projective structure, with its marking being the only one compatible with the marking of $C_t$. Then, there is a natural bijection $\mathcal{P}(S)\simeq\mathcal{P}(X/B)$ (remember that marked complex curves have no non-trivial automorphisms; this rigidity is key to the smoothness of Teichmüller spaces and the existence of universal family of curves).

In the genus $1$ case, the section $s$ of $\mathcal{P}(S)\rightarrow\mathcal{T}(S)$ provided by the uniformization theorem (as in \autoref{family-genus-1}) induces the natural complex structure on $\mathcal{P}(S)$. However, this is false as soon as $g\geq2$ (note that if $g=1$ the monodromy representation of the canonical projective structure is complex, whereas if $g\geq2$ it is real). A way to construct a relative projective structure in the latter case is to use the simultaneous uniformization theorem of Bers. This leads to a so-called \emph{quasi-Fuchsian section}; for more details about this, see the discussion in \cite[Sec. 3.3]{MR2497780}.

The existence of a relative projective structure locally over the base of an arbitrary holomorphic family of compact curves may then be deduced from the universal property of Teichmüller spaces.

\begin{prop}\label{existence-relative-proj-struct}
	Let $f:X\rightarrow B$ be a holomorphic family of compact smooth curves. For any point $t_0$ in $B$, there exists a neighborhood $B_{t_0}$ of $t_0$ in $B$ such that the holomorphic family $f_{t_0}: f^{-1}(B_{t_0})\rightarrow B_{t_0}$ induced by $f$ supports a relative projective structure.
\end{prop}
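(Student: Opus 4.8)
The plan is to deduce this from the universal property of Teichmüller space together with Hubbard's existence result for relative projective structures over Stein bases (in particular over $\mathcal{T}(S)$ itself, via the quasi-Fuchsian or genus-$1$ uniformization section discussed above). Let $t_0 \in B$ and let $C_{t_0} = f^{-1}(t_0)$; fix a diffeomorphism identifying $C_{t_0}$ with the reference topological surface $S$ of the appropriate genus $g$. The family $f$ then induces a classifying (period) map $\kappa : B \to \mathcal{T}(S)$, defined at least on a neighborhood $B_{t_0}$ of $t_0$ after choosing a continuous family of markings on the fibers $C_t$ (possible locally, since $f$ is a smooth fiber bundle and markings form a torsor under a discrete group); this map $\kappa$ is holomorphic by the universal property of the Teichmüller family $g : X_{\mathcal{T}} \to \mathcal{T}(S)$, and the restricted family $f_{t_0} : f^{-1}(B_{t_0}) \to B_{t_0}$ is the pullback $\kappa^* X_{\mathcal{T}}$, i.e. there is a fiberwise-biholomorphic map $\Phi : f^{-1}(B_{t_0}) \to X_{\mathcal{T}}$ over $\kappa$.

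Next I would transport a relative projective structure along $\Phi$. On the universal family $g : X_{\mathcal{T}} \to \mathcal{T}(S)$ there exists a relative projective structure $\mathcal{P}_{\mathcal{T}}$: for $g \geq 2$ this is the quasi-Fuchsian (Bers) section, for $g = 1$ the uniformization section of Example \ref{family-genus-1}, and for $g = 0$ the fibers are all $\mathbb{P}^1$ with its unique projective structure and $\mathcal{T}(S)$ is a point, so the statement is trivial. Pulling back $\mathcal{P}_{\mathcal{T}}$ by the fiberwise-biholomorphic, holomorphic map $\Phi$ yields a relative projective atlas on $f^{-1}(B_{t_0})$: concretely, if $(U_i,\varphi_i)$ are charts of $\mathcal{P}_{\mathcal{T}}$, then $(\Phi^{-1}(U_i), \varphi_i \circ \Phi)$ are charts on $f^{-1}(B_{t_0})$ whose restriction to each fiber $C_t$ is a composition of a projective chart of $C_{\kappa(t)}$ with a biholomorphism $C_t \to C_{\kappa(t)}$, hence a projective chart of $C_t$; the transition maps remain restrictions of Möbius transformations because $\Phi$ cancels out in $\varphi_i \circ \Phi \circ (\varphi_j \circ \Phi)^{-1} = \varphi_i \circ \varphi_j^{-1}$. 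Taking the maximal relative atlas containing this one gives the desired relative projective structure.

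The main obstacle is making the reduction to Teichmüller space rigorous: one must produce, on a possibly shrunk neighborhood $B_{t_0}$, a continuous (equivalently, after shrinking, smooth) choice of marking for the fibers so that the classifying map $\kappa$ is well-defined and holomorphic, and one must check that $f_{t_0}$ is genuinely the pullback of the universal family — i.e. invoke the precise universal property of $(X_{\mathcal{T}}, g)$ as a fine moduli family of marked curves and the fact that marked curves have no nontrivial automorphisms (so that $\Phi$ exists and is unique). This is standard Teichmüller theory but deserves a careful statement. A secondary point, which is routine once the pullback is set up, is checking that "relative projective atlas" is stable under pullback by a holomorphic map that is a biholomorphism on each fiber; I would spell this out briefly as above. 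Everything else — maximality, the genus-$0$ and genus-$1$ base cases — is immediate.
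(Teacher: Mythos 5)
Your argument is exactly the route the paper intends: the proposition is stated there without proof, preceded only by the remark that it "may be deduced from the universal property of Teichmüller spaces" together with the earlier discussion of the quasi-Fuchsian (resp.\ uniformization) section on the universal family, and your write-up fleshes out precisely this — classify the family locally by a holomorphic map to $\mathcal{T}(S)$, realize it as a pullback of the universal family, and pull back the relative projective structure living there. The proof is correct; the only points you leave implicit (local existence of a continuous marking, and the mild rigidity issue for $g\leq 1$ where one locally rigidifies, e.g.\ by a section) are standard and are equally glossed over in the paper.
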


\paragraph{Hejhal's theorem.} Gunning showed that a compact smooth curve $C$ admits an \emph{affine structure}\footnote{A projective structure such that, in coordinates, transition maps belong to the group $\operatorname{Aff}(\mathbb{C})$.} if and only if its genus is $1$ (this is false for meromorphic projective structures, as it will become apparent latter). Moreover, every projective structure on a genus $1$ curve can be reduced to an affine structure (see \cite[Cor. 3 p. 173, p. 192]{MR0207977} or \cite[Ex. 1.7]{MR2647972}). This implies that the monodromy representation of any developing map of a projective structure on a surface of genus $g\geq 2$ have non-commutative image \cite[Cor. p. 260]{MR624819} (and are irreducible). After choosing a set of generators of the group $\pi_1(S)$, the set of representations $\operatorname{Hom}(\pi_1(S),\operatorname{PGL}(2,\mathbb{C}))$ can be identified with an analytic space in $\operatorname{PGL}(2,\mathbb{C})^{2g}$. The open subset of representations with non-commutative image in $\operatorname{Hom}(\pi_1(S),\operatorname{PGL}(2,\mathbb{C}))$ is a sub-manifold of $\operatorname{PGL}(2,\mathbb{C})^{2g}$ and its quotient $\mathcal{R}^{\operatorname{nc}}(S)$ modulo conjugacy has a unique structure of complex manifold such that the quotient map is analytic \cite[Prop. 4]{MR624819}. It is of complex dimension $6g-6$, just as $\mathcal{P}(S)$.

\begin{thm}\emph{(\cite[Thm. 1 p. 20]{MR463429}, \cite[Cor. 2]{MR624807}, \cite[Thm. p. 272]{MR624819})}
	If $S$ has genus $g\geq2$, then the monodromy map \[\operatorname{Mon}_{S}^{g\geq2}:\mathcal{P}(S)\longrightarrow\mathcal{R}^{\operatorname{nc}}(S)\] is a local biholomorphism.
\end{thm}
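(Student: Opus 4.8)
The plan is to derive the statement from three ingredients: that $\mathcal{P}(S)$ and $\mathcal{R}^{\operatorname{nc}}(S)$ are complex manifolds of the same dimension $6g-6$ (recalled above), that $\operatorname{Mon}_{S}^{g\geq2}$ is holomorphic (this is known -- it follows from Hubbard's description of the complex structure on $\mathcal{P}(S)$ together with the holomorphic dependence of the developing map, equivalently of the solutions of the Schwarzian equation \eqref{schwarzian-equation}, on its coefficients), and local injectivity. Since a locally injective holomorphic map between complex manifolds of equal dimension is a local biholomorphism, it suffices to prove that $\operatorname{Mon}_{S}^{g\geq2}$ is locally injective. By \autoref{Poincare-thm} it is already injective along the fibres of the forgetful map $\mathcal{P}(S)\to\mathcal{T}(S)$; what has to be controlled are the deformations that move the complex structure of the curve.

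I would argue in the oper/Riccati picture. A projective structure $P$ on a curve $C$ is the datum of a triple $(\pi\colon Q\to C,\mathcal{F},\sigma)$ with $\mathcal{F}$ a Riccati foliation transverse to the fibres of $\pi$ and $\sigma$ a holomorphic section transverse to $\mathcal{F}$; its monodromy is the holonomy of $\mathcal{F}$, and $(Q,\mathcal{F})$ is the suspension of that holonomy. Suppose $P_0$ on $C_0$ and $P_1$ on $C_1$ are close in $\mathcal{P}(S)$ and have the same monodromy $[\rho]$. Since $g\geq2$, $\rho$ is irreducible, hence determined up to a unique conjugation, so the suspension $(Q,\mathcal{F})$ of $[\rho]$ is a single flat object, and $P_i$ exhibits it as an oper over $C_i$. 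Thus $[C_0]$ and $[C_1]$ both lie in
\[
Z:=\{\,[C_t]\in\mathcal{T}(S)\ :\ \text{the flat }\mathbb{P}^1\text{-bundle of }[\rho]\text{ over }C_t\text{ carries a holomorphic section transverse to }\mathcal{F}\,\}.
\]
If $Z$ is discrete near $[C_0]$, then $C_1=C_0$ and \autoref{Poincare-thm} forces $P_1=P_0$, which is local injectivity. (This is the static shadow of the isomonodromic-flow argument: the isomonodromic deformation of $(Q,\mathcal{F})$ exists over all of $\mathcal{T}(S)$, but it generically leaves the locus of opers, and $Z$ is precisely where it does not.)

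So everything comes down to the discreteness of $Z$, which I would obtain by computing its Zariski tangent space. The normal bundle of a holomorphic section $\sigma$ of $\pi$ is $N_\sigma=\sigma^{*}T_{Q/C}$, and the two transversality conditions -- of $\sigma$ to $\mathcal{F}$, and of $\mathcal{F}$ to the fibres of $\pi$ -- give, along $\sigma(C)$, two direct-sum decompositions $TQ=N_\sigma\oplus T\mathcal{F}=\sigma_{*}TC\oplus T\mathcal{F}$, hence a canonical isomorphism $N_\sigma\cong TC=K_C^{-1}$. As the complex structure on $C$, and with it that on $Q$, is deformed by a Kodaira--Spencer class $\mu\in H^{1}(C,TC)=T_{[C]}\mathcal{T}(S)$, the obstruction to deforming $\sigma$ so that it remains holomorphic and transverse is a class $\mathbf{o}(\mu)\in H^{1}(C,N_\sigma)$; the crucial point is that, under the identification $N_\sigma\cong K_C^{-1}$ above, the linear map $\mathbf{o}\colon H^{1}(C,K_C^{-1})\to H^{1}(C,K_C^{-1})$ is essentially the identity -- in particular an isomorphism. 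Together with $H^{0}(C,N_\sigma)=H^{0}(C,K_C^{-1})=0$ for $g\geq2$ (so that such a section, when it exists, is unique), this shows that $Z$ is discrete. \emph{Verifying that $\mathbf{o}$ is this canonical map -- equivalently, that opers are infinitesimally rigid under the isomonodromic flow in the Teichmüller directions -- is the main obstacle}: this is exactly the content of Hejhal's differential estimate (and of Earle--Hubbard's computation), and I expect to reprove it by a Čech--Dolbeault computation carried out on the oper filtration $K_C\subset F\subset\operatorname{ad}\rho$ of the flat adjoint bundle, the dimension bookkeeping being done by Riemann--Roch ($h^{0}(K_C^{-1})=0$, $h^{1}(K_C^{-1})=3g-3$); the hypothesis $g\geq2$ also ensures that $\mathcal{R}^{\operatorname{nc}}(S)$ is a complex manifold of dimension $6g-6$.

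Granting the discreteness of $Z$, local injectivity of $\operatorname{Mon}_{S}^{g\geq2}$ follows, and then -- being a holomorphic map between complex manifolds of equal dimension $6g-6$ -- it is a local biholomorphism, as wanted. Two variants would also work and are worth noting: one can package the same computation as the injectivity of the differential $d\operatorname{Mon}_{S}^{g\geq2}\colon T_P\mathcal{P}(S)\to H^{1}(C,\operatorname{ad}\rho)$, read off in one step from the oper filtration of $\operatorname{ad}\rho$ and $H^{0}(C,K_C^{-1})=0$; and, in the spirit of the present paper, one can replace the static argument above by Heu's universal isomonodromic deformation, using the isomonodromic flow to retract two nearby opers with the same monodromy onto one another and then invoking the open mapping theorem -- this is the line followed for \autoref{thm:local-injectivity}. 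The Ehresmann--Thurston deformation of development--holonomy pairs gives yet another proof of local injectivity.
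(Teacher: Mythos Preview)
The paper does not prove this theorem in place; it is cited as a known result. However, the proof of the paper's main \autoref{thm:local-injectivity} specialises (take $d=0$, no poles) to a complete proof of the present statement, and that is the argument one should compare against. That argument is precisely the Ehresmann--Thurston/isomonodromic-flow proof you list as a ``variant'' at the end: one builds a local $\mathcal{C}^\infty$ product structure for the family of Riccati foliations (Lemma~\ref{localproductstructure}), transports $\sigma_2$ into a tubular neighbourhood of $\sigma_1$, and then uses the holonomy of $\mathcal{F}_1$ along its leaves to retract $\tilde\sigma_2$ onto $\sigma_1$, producing a diffeomorphism $C_2\to C_1$ that is an isomorphism of projective structures. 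No infinitesimal computation is performed; local injectivity is obtained by a direct geometric retraction, and the biholomorphy follows from holomorphy plus equality of dimensions.

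Your primary route---showing the locus $Z\subset\mathcal{T}(S)$ is discrete by proving the obstruction map $\mathbf{o}\colon H^1(C,K_C^{-1})\to H^1(C,N_\sigma)$ is an isomorphism---is a genuinely different strategy, essentially the Earle--Hubbard infinitesimal one, and it is sound in principle. But as written it is not a proof: you explicitly flag the identification of $\mathbf{o}$ with the canonical map as ``the main obstacle'' and do not carry out the \v{C}ech--Dolbeault computation. Everything else you write (irreducibility of $\rho$ for $g\ge2$, $N_\sigma\cong K_C^{-1}$, $H^0(C,K_C^{-1})=0$, the reduction to discreteness of $Z$ plus \autoref{Poincare-thm}) is correct, so the gap is localised but real. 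The paper's approach buys you exactly the avoidance of this computation: the product-structure/holonomy argument needs no cohomological estimate, only continuity of the family of opers and transversality of $\sigma$ to $\mathcal{F}$. Conversely, your approach, once completed, would yield slightly more---namely the injectivity of $d\operatorname{Mon}_S$ directly---and is closer in spirit to what one would do for more general $G$-opers.
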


In the genus $1$ case, the monodromy representation always belongs to the set $\mathcal{A}(S):=\operatorname{Hom}(\pi_1(S),\operatorname{Aff}(\mathbb{C}))/\sim$ of affine representation modulo conjugacy. It is also possible to show that the monodromy map $\operatorname{Mon}_{S}^{g=1}:\mathcal{P}(S)\longrightarrow\mathcal{A}(S)$ is a local biholomorphism (see \cite[Sec. 1.4]{MR2647972}). Here, the \emph{affine distortion} $f''/f'$ can play the role of the Schwarzian derivative.

In the genus $0$ case, there is only one projective structure and it is simply connected, so it would not make sense to seek for a similar statement.

\subsection{The space of meromorphic projective structures}\label{section-meromorphic-projective-structures}

The definition of a meromorphic projective structure on $C$ relies on the affine space structure on the set of complex projective structures over a complex curve.

\begin{defi}\label{meromorphicprojectivestructure}
	A \emph{meromorphic projective structure $P$ on a complex curve $C$}
	is a projective structure $P^*$ on the complement $C^*=C\smallsetminus\Sigma$ of a finite subset $\Sigma\subset C$, such that given a holomorphic projective structure $P_0$ on $C$, the quadratic differential $\phi=P^*-P_{0|C^*}$ on $C^*$ extends to a meromorphic quadratic differential on $C$, that is to say a meromorphic section of $(T^*C)^{\otimes 2}$.
\end{defi}

Such a structure $P^*$ is therefore determined by a pair $(P_0,\phi)$. Though it is not unique, every other pair $(\tilde{P_0},\tilde{\phi})$ describing $P$ satisfies $(\tilde{P_0},\tilde{\phi})=(P_0+\phi_{\operatorname{hol}},\phi-\phi_{\operatorname{hol}})$ for some holomorphic quadratic differential $\phi_{\operatorname{hol}}$. Hence the condition on $\phi$ is independent of $P_0$: points in $\Sigma$ are called \emph{poles} of the structure $P$ and the \emph{pole order} of $P$ at a point can be defined as the pole order at that point of one of its so-called \emph{polar differentials} $\phi$. Hence if $n_p$ is the order of $p\in\Sigma$, we can define the (effective) \emph{polar divisor} of $P$ by $D_P=\sum_{p\in\Sigma}n_pp$.

\paragraph{The monodromy representation.} The monodromy representation of a meromorphic projective structure~$P$ is well-defined, up to $\operatorname{PGL}(2,\mathbb{C})$-conjugacy, as the one of $P^*$ in (the notations of \autoref{meromorphicprojectivestructure}). It is a class of representation of the fundamental group of the punctured curve $C\smallsetminus\Sigma$. The \emph{local monodromy} around a pole is the image of a loop encircling it once in the counterclockwise direction.

\begin{rmk}\label{monodromy-scalareq-system}
	If $(y_1,y_2)$ is a fundamental system of solutions of a second order homogeneous scalar linear differential equation, then \[\begin{pmatrix}
	y_1 & y_2\\
	y_1' & y_2'
	\end{pmatrix}\] is a fundamental solution of its companion systems, and reciprocally. As a consequence, the monodromy matrices of the scalar equation and its companion system (both acting by right multiplication) around a pole are the same, say $M$.

	The monodromy of the projective structure associated to an equation of the form \eqref{second-order-reduced-scalar-eq}, however, is given in the homogeneous coordinates $[y_1:y_2]$ by the projectivization of the \emph{transpose} $^\top M$. But it is a classical fact that a matrix with complex coefficients is always conjugated to its transpose (in particular, a matrix has the same eigenvalues as its transpose), thus the local monodromy of the projective structure is, up to conjugacy, the projectivization of the local monodromy of the associated scalar equation and of that of its companion linear system.
\end{rmk}

\paragraph{Regular singularities.}  This terminology comes from second order scalar differential equations (and their companion first order linear differential systems). The term \emph{regular singular} refers to points where solutions of the scalar equation (or the linear system) exhibit at most polynomial growth on some sectors (see~\cite{IlyashenkoEncyOfMath}). For the scalar equation \eqref{second-order-reduced-scalar-eq} (and by definition for projective structures), this is equivalent to $q$ having a pole order at most $2$ (we also say it is a \emph{Fuchsian singularity}), but not for linear systems. On the one hand, a singularity with \emph{Poincaré rank} equal to $0$ is regular singular. On the other, only the \emph{minimal} Poincaré rank (minimal in the equivalence class of the system up to meromorphic gauge transformation\footnote{cf. \autoref{minimal-polar-divisor-linear-connection}.}) at a regular singularity must be equal to $0$. Singularities that are not regular are of course called \emph{irregular singularities}. Though this terminology is well-established, some authors prefer the adjectives \emph{tame} (or \emph{moderate}) and \emph{wild}, which are less confusing.

A regular singularity is called \emph{apparent}\footnote{cf. \cite[Sec. 2.3]{MR4286048} for a review of other definitions appearing in the literature.} if $P^*$ has trivial local monodromy, else it is called \emph{logarithmic}. %

Apparent singularities correspond exactly to \emph{branch points} of the projective structure, i.e. points for which there exists an integer $k\in\mathbb{N}$ such that projective charts read $\varphi(x) = x^k$ in some local coordinates. About branch points, see also \autoref{apparent-sing-and-tangency}. In the present paper, we will mainly focus on projective structures without branch points. Branched projective structures where studied by Mandelbaum \cite{MR288253}.

Let $p$ be a regular singularity of $P$, and write $\phi=\frac{q(x)}{2}dx^{\otimes 2}$ for a polar differential in some coordinate chart $x$ centered at $p$. We define the \emph{residue}\footnote{Here, we follow the terminology and notations of \cite[Sec. IX.1]{MR3494804}, which are a slightly different from the ones in \cite{Allegretti_2020}: our $q$ is equal to their $-2\varphi$, hence their definitions of "leading coefficient", "residue" and "exponent" does not match ours.} (of order $2$) of  $P$ at $p$ to be \begin{equation}\label{def:residue-of-order-two}
\operatorname{res}_2(p)=\lim_{x\to0}x^2q(x).
\end{equation}
This definition is independent of the polar differential and of the coordinate chart. Latter, we will also call "residue at $p$" a quantity different from $\operatorname{res}_2(p)$, but related (see \autoref{rk:residues}).

The \emph{index} $\theta(p)$ of $P$ at $p$, is then defined up to a multiplication by $-1$ as \[\theta(p)=\pm\sqrt{1-2\operatorname{res}_2(p)},\] in other words, writing the Schwarzian equation~\eqref{schwarzian-equation} for a projective chart $\varphi$ around $p$ (in a complex coordinate $x$ centered at $p$) we have \[\mathcal{S}_x(\varphi)=q(x)=\frac{1-\theta(p)^2}{2 x^2}+o(x^{-2}).\] Projective charts and monodromy around regular singularities may be studied according to the value of the index $\theta(p)$. For a detailed analysis, the reader may refer to~\cite[Sec. IX.1]{MR3494804}.

Consider the companion system of the second order equation \eqref{second-order-reduced-scalar-eq} (cf. \autoref{monodromy-scalareq-system}), or more naturally equation \eqref{system},
\begin{equation}\label{companion-system}
dY+\begin{pmatrix}
0 & -1\\
q(x)/2 & 0
\end{pmatrix}Ydx=0.
\end{equation}
Up to conjugacy, the projectivization of the monodromy of this system corresponds to the local monodromy of the projective structure.

\begin{prop}[cf. \cite{Allegretti_2020}, Lem. 5.1]\label{eigenvalues}
	Assume that the system \eqref{companion-system} has a regular singularity at $z=0$. Then, the eigenvalues of its monodromy around $z=0$ are \[\xi^\pm=\exp\left(2i\pi\left(\frac{1\pm\theta}{2}\right)\right),\] where $\theta=\theta(0)$ is defined up to a multiplication by $-1$ as above. In particular, the eigenvalues $\xi^\pm$ are distinct if and only if $\pm\theta\in\mathbb{C}\smallsetminus\mathbb{Z}$.
\end{prop}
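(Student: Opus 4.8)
The plan is to reduce everything to a classical Frobenius analysis of the scalar equation \eqref{second-order-reduced-scalar-eq} and then transfer the conclusion to the companion system \eqref{companion-system} via \autoref{monodromy-scalareq-system}, which tells us that the two share the same monodromy matrix (in particular the same eigenvalues). First I would use the characterization recalled in the paragraph on regular singularities: the system \eqref{companion-system} being regular singular at $z=0$ means that $q$ has a pole of order at most $2$ there, so one may write the convergent Laurent expansion $q(x)=c_{-2}x^{-2}+c_{-1}x^{-1}+\cdots$, and by \eqref{def:residue-of-order-two} the leading coefficient is exactly $c_{-2}=\operatorname{res}_2(0)$. It is worth noting here that one really has to pass through the scalar equation: the companion system of a Fuchsian scalar equation need not itself be Fuchsian as a first-order system (precisely the dichotomy already flagged in the text), so reading exponents off a naive linearization of \eqref{companion-system} at $z=0$ is not legitimate without first performing a meromorphic gauge transformation reducing it to Poincaré rank $0$; working with \eqref{second-order-reduced-scalar-eq} sidesteps this.

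Next I would run Frobenius' method on \eqref{second-order-reduced-scalar-eq}. Substituting a formal solution $y=x^{r}\sum_{k\ge 0}a_k x^k$ with $a_0\neq 0$ and collecting the lowest-order term (of order $x^{r-2}$) yields the indicial equation
\[
r(r-1)+\tfrac{c_{-2}}{2}=0,\qquad\text{i.e.}\qquad r^2-r+\tfrac{1}{2}\operatorname{res}_2(0)=0,
\]
whose roots are
\[
r^{\pm}=\tfrac{1}{2}\bigl(1\pm\sqrt{1-2\operatorname{res}_2(0)}\bigr)=\tfrac{1\pm\theta}{2},
\]
with $\theta=\theta(0)$ as defined just before the proposition (and, consistently, defined only up to sign, which merely swaps $r^+$ and $r^-$). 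Then I would invoke the Fuchs–Frobenius theorem: there is a fundamental system of solutions, convergent near $0$, in which the monodromy operator around $z=0$ (counterclockwise) is upper triangular with diagonal entries $\bigl(e^{2i\pi r^+},e^{2i\pi r^-}\bigr)$; when $r^+-r^-\notin\mathbb{Z}$ the two Frobenius solutions $x^{r^{\pm}}h^{\pm}(x)$ (with $h^{\pm}$ holomorphic, $h^{\pm}(0)\neq 0$) diagonalize it, while when $r^+-r^-\in\mathbb{Z}$ a logarithmic term may have to be added for the smaller exponent. In every case the eigenvalues of the monodromy of \eqref{second-order-reduced-scalar-eq} are
\[
e^{2i\pi r^{\pm}}=\exp\!\left(2i\pi\,\tfrac{1\pm\theta}{2}\right)=\xi^{\pm},
\]
and hence, by \autoref{monodromy-scalareq-system}, so are the eigenvalues of the monodromy of \eqref{companion-system}.

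Finally, for the last assertion: $\xi^+=\xi^-$ exactly when $r^+-r^-=\theta\in\mathbb{Z}$, so $\xi^{\pm}$ are distinct if and only if $\pm\theta\in\mathbb{C}\smallsetminus\mathbb{Z}$. I expect the only genuinely delicate point to be the resonant case $r^+-r^-\in\mathbb{Z}$ (including the degenerate subcases where $q$ has pole order $0$ or $1$, giving $\operatorname{res}_2(0)=0$, $\theta=1$, $\xi^{\pm}=1$): there the monodromy may fail to be semisimple, so one must phrase the argument in terms of a triangularizing basis and of the diagonal of the resulting matrix rather than of an eigenbasis, and also keep track of the fact that the equality of exponents modulo $\mathbb{Z}$ still forces equality of the two eigenvalues $e^{2i\pi r^{\pm}}$. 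Everything else is standard, and the convergence of the Frobenius series is automatic at a regular singular point.
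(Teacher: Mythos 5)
Your proof is correct: the Frobenius/indicial-equation computation for the scalar equation \eqref{second-order-reduced-scalar-eq}, the transfer to the companion system via \autoref{monodromy-scalareq-system}, and the careful handling of the resonant case (triangularizing rather than diagonalizing basis) are all sound, and the indicial roots $r^{\pm}=(1\pm\theta)/2$ match the paper's normalization $q(x)=\frac{1-\theta^2}{2x^2}+o(x^{-2})$. The paper itself offers no proof but simply cites \cite{Allegretti_2020}, Lem.~5.1; your argument is the standard one underlying that reference, so it is essentially the intended route.
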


Several possibilities occur for the local monodromy of the projective structure, depending on the value of the index:

\begin{itemize}
	\item if $\pm\theta\in\mathbb{C}\smallsetminus\mathbb{Z}$, it is non-trivial and non-parabolic,
	\item if $\pm\theta\in\mathbb{Z}$, it is trivial or parabolic.
\end{itemize}

\begin{rmk}\label{rmk:parabolic-local-monodromy}
	In order to construct the \emph{monodromy framed local system} of a projective structure, Allegretti and Bridgeland needed to chose a point of $\mathbb{P}^1$ fixed by the local monodromy of each regular singularity.
	Such a choice is encoded by the notion of \emph{signed} meromorphic projective structure: a signing is just a choice of sign "$+$" or "$-$" at some of the regular singularities. In \cite[Prop. 8.4]{Allegretti_2020}, the signing considered is the one given by a "choice of sign" of the index $\theta(p)=\pm\sqrt{1-2\operatorname{res}_2(p)}$ at each regular singularity $p$, i.e. a determination of the square root. The square root branches over $\theta=0$, so that no sign is chosen in this case, which is fine because the local monodromy is then parabolic~\cite[Sec. IX.1]{MR3494804}, hence it has a unique fixed point. However, the square root does not branch over $\theta\in\mathbb{Z}\smallsetminus\{0\}$, hence a choice is made here although it is not really needed.

	Those points of the moduli space of projective structures where parabolic local monodromy occur (we always exclude apparent singularities) causes a failure in the local injectivity of the monodromy map.
	
	In the introduction, we have mentioned several works generalizing Hejhal's theorem for meromorphic projective structures. Luo \cite{MR1202134} considered the case where all singularities are regular (of order exactly $2$) with an index  $\theta\in\mathbb{C}\smallsetminus\mathbb{Z}$, thus excluding non-trivial and non-parabolic local monodromy. On the other hand, Hussenot Desenonges \cite{MR3956187} considered a case where all singularities are regular (of order $2$) with an index $\theta=0\in\mathbb{Z}$ (hence a parabolic local monodromy). Thus, Luo does not fix the indices but excludes parabolic local monodromies, whereas Hussenot Desenonges considers only parabolic local monodromies but fixes the indices. In the present paper, we will fix the values of the indices and we will not exclude parabolic local monodromies.

	As for Gupta and Mj \cite{MR4232545}, they dealt with projective structures with only irregular singularities, hence this issue does not arise in their work.
\end{rmk}

\begin{exe}\label{hypergeometric-equation}
	The family of Gauss hypergeometric equations \[x(x-1)\frac{d^2y}{dx^2}+[(\alpha+\beta+1)x-\gamma]\frac{dy}{dx}+\alpha\beta y=0,\textmd{ with $\alpha,\beta,\gamma\in\mathbb{C}$ and $x\in\mathbb{C}$}\] induces a family of meromorphic projective structures on $\mathbb{P}^1$ with $3$ regular singularities at $0$, $1$ and $\infty$ \cite[Sec. IX.2]{MR3494804}.		
	
	In fact, every projective structure of this type is induced by a hypergeometric equation. Projective charts take the form \[\varphi(x)=f(x)^\theta\textmd{ or }\varphi(x)=f(x)^\theta+\operatorname{log}(f(x))\] where $\theta\in\mathbb{C}$ depends on $(\alpha,\beta,\gamma)$, and $f$ is a local complex coordinate around a singularity $x_0$, $f(x_0)=0$.
\end{exe}

\begin{exe}
	The Heun equations induces projective structures on $\mathbb{P}^1$ with $4$ regular singularities (see \cite[Sec. IX.3.1]{MR3494804}).
\end{exe}

\begin{exe}
	An example on the torus is provided by the Lamé equation (see \cite[IX.3.3]{MR3494804}).
\end{exe}

The subsequent paragraph is devoted to the description of a moduli space of meromorphic projective structures.

\paragraph{The moduli space $\mathcal{P}(X/B;\mathcal{D})$.} Let $d\in\mathbb{N}$ be an integer and $(n_i)\in\mathbb{N}^d$ be a collection of integers and
\begin{equation*}\xymatrix{
	f:X \ar[r] & B \ar@/_1pc/_-{p_i}[l]
}\end{equation*} %
a holomorphic family of compact smooth curves, together with $d$ disjoint holomorphic sections $p_1,\dots,p_d$. We can define an effective divisor $\mathcal{D}$ as $\mathcal{D}:=\sum_{i=1}^dn_i\mathcal{D}_i$ with $\mathcal{D}_i=\operatorname{im}(p_i)$. Its restriction to $C_t=f^{-1}(t)$, for some $t\in B$, will be denoted $D_t=\sum_{i=1}^dn_ip_i(t)$. There is a holomorphic vector bundle \[\mathcal{Q}(X/B;\mathcal{D})\rightarrow B\] whose fiber over a point $t\in B$ is the vector space $\mathcal{Q}(X/B;\mathcal{D})_t=H^0(C_t,(T^*C_t)^{\otimes2}(D_t))$ of meromorphic quadratic differentials on $C_t$ having poles of order at most $n_i$ at the points $p_i(t)$ and no other pole. As a consequence of the Riemann-Roch theorem, \begin{equation}\label{Riemann-Roch}
\dim(H^0(C_t,(T^*C_t)^{\otimes2}(D_t)))=3g-3+\sum_{i=1}^{d} n_i.
\end{equation}

Let us denote $\mathcal{P}(X/B)_t$ the fiber of the projection of $\mathcal{P}(X/B)$ to $B$. Then, define $\mathcal{P}(X/B;\mathcal{D})$ as the affine bundle (for the vector bundle $\mathcal{Q}(X/B;\mathcal{D})$) whose fiber over $t$ is the quotient \[\mathcal{P}(X/B;\mathcal{D})_t:=(\mathcal{P}(X/B)_t\times Q(X/B;\mathcal{D})_t)/\sim\] for the equivalence relation \[((t,P_1),\phi_1)\sim((t,P_2),\phi_2)\Leftrightarrow(P_2,\phi_2)=(P_1+\phi_{\operatorname{hol}},\phi_1-\phi_{\operatorname{hol}})\] for some $\phi_{\operatorname{hol}}\in Q(X/B)$. The resulting quotient space is in bijection with the set
\begin{align*}
\{(t,P): ~&t\in B\textmd{ and $P$ is a meromorphic projective structure on $C_t$}\\
&\textmd{having poles of order at most $n_i$ at the points $p_i(t)$ and no other pole}\}.
\end{align*}
The space $\mathcal{Q}(X/B;\mathcal{D})_t$ acts transitively and freely on equivalence classes by $(\tilde{\phi},t)\cdot[(t,P),\phi]=[(t,P),\phi+\tilde{\phi}]$.

A \emph{family of meromorphic projective structures} on $f$ relative to the divisor $\mathcal{D}$ is defined to be a holomorphic section of this affine bundle $\mathcal{P}(X/B;\mathcal{D})\rightarrow B$. It induces a relative projective structure on the restriction of $f$ to $X\smallsetminus\mathcal{D}$ \cite[Sec. 7.3]{Allegretti_2020}.

The subset corresponding to projective structures with poles of orders exactly $(n_i)$ forms a dense open set \[\mathcal{P}(X/B;\mathcal{D},(n_i))\subset\mathcal{P}(X/B;\mathcal{D})\] (see \cite[Lem. 8.1]{Allegretti_2020}).

As usual, denote by $S$ a smooth oriented compact real surface of genus $g$. If $B=\mathcal{T}(S,d)$ is the Teichmüller space of marked curves (i.e. marked Riemann surfaces) on $S$ equipped with $d$ marked points, and if $f$ is the universal holomorphic family of marked curves, we chose the sections $p_i$ such that for $t\in\mathcal{T}(S,d)$, the $p_i(t)$ correspond to marked points on $C_t$.

\paragraph{Marked meromorphic projective structures.}

Take a meromorphic projective structure $P$ on $C$ with pole orders $(n_i)$, and perform a real oriented blow-up\footnote{Locally, the blow down is: $\mathbb{R}_{\geq 0}\times S^1\longrightarrow\mathbb{R}^2\text{, }(r,\theta)\longmapsto(r\operatorname{cos}(\theta),r\operatorname{sin}(\theta))$.} at each irregular singularity. Consider one of them, say $p_i$. Then, the $n_i-2$ corresponding \emph{asymptotic horizontal directions} (see~\cite{Allegretti_2020}) of any polar differential of $P$ at $p_i$ define $n_i-2$ points on the boundary component of the resulting blown-up surface $\mathbb{S}_P$ lying over $p_i$. Together with the set $\mathbb{P}_P$ of points in the interior of $\mathbb{S}_P$ corresponding to regular singularities (the \emph{punctures}), they form the set $\mathbb{M}_P\subset\mathbb{S}_P$ of \emph{marked points} (cf. \autoref{fig:blow-up}).

A \emph{marked bordered surface} is defined to be a pair $(\mathbb{S},\mathbb{M})$ consisting of a compact, connected, oriented, smooth surface with boundary $\mathbb{S}$ and a finite non-empty set $\mathbb{M}\subset\mathbb{S}$ of marked points such that each boundary component of $\mathbb{S}$ contains at least one marked point (we refer to the definition given in \cite[Sec. 3.6]{Allegretti_2020}). Let $\mathbb{S}'$ denote the surface obtained by taking the real oriented blow-up of $\mathbb{S}$ at each puncture: this replaces punctures with boundary components containing no marked points. A marked bordered surface $(\mathbb{S},\mathbb{M})$ is determined up to isomorphism by its genus and a collection of non-negative integers $\{k_1,\dots,k_d\}$ giving the number of marked points on the $d$ boundary components of $\mathbb{S}'$. For example, $(\mathbb{S}_P,\mathbb{M}_P)$ is a marked bordered surface determined by the genus of $P$ and the numbers \begin{equation}\label{k_iversusn_i}
k_i=\begin{cases*}0\text{ if }n_i\leq 2\\n_i-2\text{ otherwise.}\end{cases*}
\end{equation}

\begin{figure}[H]
	\centering
	\def\svgwidth{0.6\textwidth}
	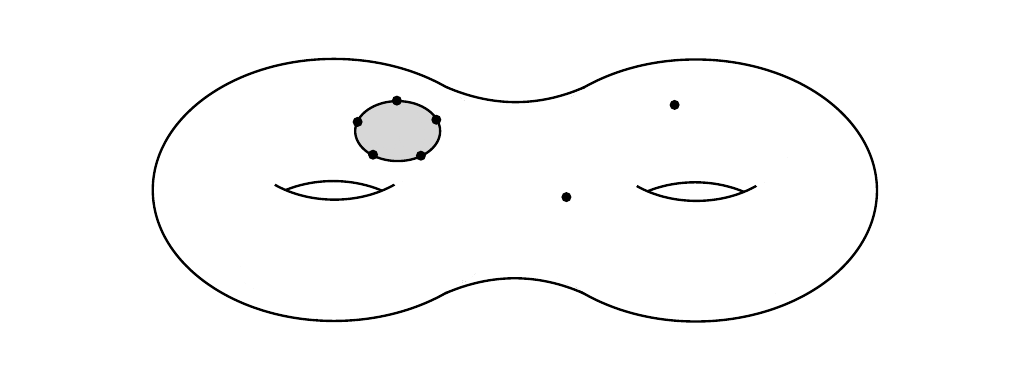
	\caption{The surface $\mathbb{S}_P$ with marked points $\mathbb{M}_P$.}
	\label{fig:blow-up}
\end{figure}

A \emph{marking of the meromorphic projective structure $P$ by a marked bordered surface $(\mathbb{S},\mathbb{M})$} is defined to be an isotopy class (relative to the boundaries and the marked points) of orientation preserving $\mathcal{C}^\infty$-diffeomorphisms $\theta:\mathbb{S}\rightarrow\mathbb{S}_P$ such that $\theta(\mathbb{M})=\mathbb{M}_P$.\footnote{The terminology is a little confusing here: we just defined a "marking" of $P$ by a "marked" bordered surface. The "marking" of $P$ refers to $\theta$, whereas "marked" bordered surface is a surface $\mathbb{S}$ with marked points $\mathbb{M}$.}

\begin{defi}
	A \emph{marked meromorphic projective structure} is a triple $(C,P,\theta)$, where $C$ is a complex curve, $P$ is a meromorphic projective structure on $C$, and $\theta$ is a marking of $P$ by a marked bordered surface $(\mathbb{S},\mathbb{M})$.
	
	Fix a marked bordered surface $(\mathbb{S},\mathbb{M})$. Two meromorphic projective structures marked by $(\mathbb{S},\mathbb{M})$, $(C_i,P_i,\theta_i)$, $i=1,2$, are said to be \emph{equivalent} if there exists a biholomorphism $C_1\rightarrow C_2$ that preserves the projective structures and whose lift to the blown-up surfaces commutes with the markings.
\end{defi}

\paragraph{The moduli space $\mathcal{P}(\mathbb{S},\mathbb{M})$.} Let $(\mathbb{S},\mathbb{M})$ be a fixed marked bordered surface determined by a genus $g$ and a non-empty collection of non-negative integers $\{k_1,\dots,k_d\}$. From now on, suppose $B=\mathcal{T}(S,d)$. Let $\mathcal{D}$ be the divisor on $X$ defined by \[\mathcal{D}:=\sum_{i=1}^d (k_i+2)\mathcal{D}_i.\] Thanks to \cite[Lem. 8.1, Prop. 8.2]{Allegretti_2020}, we know that there is a dense subset $\mathcal{V}$ of $\mathcal{P}(X/B;\mathcal{D})$ and a covering map \begin{align}\label{marquing-covering-map}
M:\mathcal{P}(\mathbb{S},\mathbb{M})\rightarrow \mathcal{V}\subset\mathcal{P}(X/B;\mathcal{D})
\end{align} whose fibers over a point $[(t,P),\phi]$ is the set of markings of $P$ by $(\mathbb{S},\mathbb{M})$ inducing (by blowing down the boundary component of $\mathbb{S}$) the marking of $S$ corresponding to $t$. This covering space $\mathcal{P}(\mathbb{S},\mathbb{M})$ is in bijection with the set of equivalence classes of marked meromorphic projective structures marked by $(\mathbb{S}, \mathbb{M})$.

\begin{rmk}\label{rmk:pole-orders}
	The moduli space $\mathcal{P}(\mathbb{S},\mathbb{M})$ was introduced by Allegretti and Bridgeland in \cite{Allegretti_2020}. We would like to emphasize that the marked bordered surface $(\mathbb{S},\mathbb{M})$ does not always determine the pole orders of the projective structures (the exact orders of regular singularities are not specified). Thus, if $p_i\in C$ corresponds to a point in the interior of $\mathbb{S}$, the open subset $\mathcal{V}$ contains projective structures with a simple pole at $p_i$ and others with a double pole at $p_i$. Projective structures with simple poles appear in the boundary of spaces of projective structures with double poles.
	
	This is natural for Allegretti and Bridgeland because of the result of Bridgeland and Smith \cite{MR3349833} which identifies the corresponding unions of components of moduli spaces of meromorphic quadratic differentials with spaces of stability conditions.
	
	However, in this paper we will soon fix the residues of the meromorphic projective structures (see \autoref{prop:fix-residues}), hence the orders of regular poles as well.
\end{rmk}

Allegretti and Bridgeland showed the following proposition.

\begin{prop}[Proposition 8.2 of \cite{Allegretti_2020}]\label{prop-smooth-moduli-proj-struct}
	Assume that if $g=0$, then $|\mathbb{M}|\geq3$. Then $\mathcal{P}(\mathbb{S},\mathbb{M})$ is a complex manifold of dimension \begin{equation*}
	6g-6+\sum_{i=1}^d(k_i+3).
	\end{equation*}
\end{prop}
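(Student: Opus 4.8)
The plan is to transport a complex structure onto $\mathcal{P}(\mathbb{S},\mathbb{M})$ through the covering map $M$ of \eqref{marquing-covering-map}, and then to read the dimension off the affine bundle $\mathcal{P}(X/B;\mathcal{D})\to B$. Recall that, by construction, $\mathcal{P}(X/B;\mathcal{D})\to B=\mathcal{T}(S,d)$ is a holomorphic affine bundle modelled on the holomorphic vector bundle $\mathcal{Q}(X/B;\mathcal{D})\to B$; being an affine bundle over a complex manifold, $\mathcal{P}(X/B;\mathcal{D})$ is a complex manifold with $\dim_{\mathbb{C}}\mathcal{P}(X/B;\mathcal{D})=\dim_{\mathbb{C}}B+\operatorname{rank}\mathcal{Q}(X/B;\mathcal{D})$. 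Now $M$ is a covering map onto $\mathcal{V}$, which is open in $\mathcal{P}(X/B;\mathcal{D})$; in particular $M$ is a local homeomorphism with discrete fibres, and pulling back through local inverses of $M$ the holomorphic charts of $\mathcal{V}$ yields a holomorphic atlas on $\mathcal{P}(\mathbb{S},\mathbb{M})$ for which $M$ is a local biholomorphism (the transition maps of this atlas are biholomorphic, since they coincide with transition maps of $\mathcal{V}$). Therefore $\mathcal{P}(\mathbb{S},\mathbb{M})$ is a complex manifold with $\dim_{\mathbb{C}}\mathcal{P}(\mathbb{S},\mathbb{M})=\dim_{\mathbb{C}}\mathcal{P}(X/B;\mathcal{D})$.

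It then remains to add up the two contributions. As $\mathbb{M}\neq\varnothing$ we have $d\geq1$, so the stability condition $2g-2+d>0$ holds automatically when $g\geq1$, and it also holds when $g=0$ and $d\geq3$; in all of these cases $\mathcal{T}(S,d)$ is a fine moduli space admitting a universal family of curves, and $\dim_{\mathbb{C}}\mathcal{T}(S,d)=3g-3+d$. For $\mathcal{Q}(X/B;\mathcal{D})$, the Riemann--Roch computation \eqref{Riemann-Roch} applied with pole orders $n_i=k_i+2$ (recall $\mathcal{D}=\sum_{i=1}^d(k_i+2)\mathcal{D}_i$) gives $\operatorname{rank}\mathcal{Q}(X/B;\mathcal{D})=3g-3+\sum_{i=1}^d(k_i+2)$. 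Summing the two, $\dim_{\mathbb{C}}\mathcal{P}(\mathbb{S},\mathbb{M})=(3g-3+d)+\bigl(3g-3+\sum_{i=1}^d(k_i+2)\bigr)=6g-6+\sum_{i=1}^d(k_i+3)$, which is the asserted value.

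The configurations left out by the argument above are $g=0$ with $d\in\{1,2\}$, which are still compatible with $|\mathbb{M}|\geq3$ (a single irregular pole with $k_1\geq3$, or two poles with $k_1+k_2\geq3$); this is where one genuinely needs $|\mathbb{M}|\geq3$ rather than $d\geq3$. Here $\mathbb{P}^1$ with $d$ marked points carries the positive-dimensional automorphism group $G_0$ of dimension $3-d$, so $\mathcal{T}(S,d)$ is not a fine moduli space and the construction above does not literally apply. I would instead describe $\mathcal{P}(\mathbb{S},\mathbb{M})$ directly: fixing the $d$ poles at prescribed points $p_1,\dots,p_d$ of $\mathbb{P}^1$, the meromorphic projective structures with the given polar divisor form an affine space over $H^0\bigl(\mathbb{P}^1,(T^*\mathbb{P}^1)^{\otimes2}(\sum_{i=1}^d n_i p_i)\bigr)$, of dimension $3g-3+\sum_i n_i$ by Riemann--Roch, and $\mathcal{P}(\mathbb{S},\mathbb{M})$ is the quotient of this affine space together with its asymptotic-direction markings by $G_0$. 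The hypothesis $|\mathbb{M}|\geq3$ is exactly what makes the residual $G_0$-action on the marked data free and proper, so the quotient is a complex manifold of dimension $(3g-3+\sum_i n_i)-(3-d)$, which equals $6g-6+\sum_{i=1}^d(k_i+3)$ once one substitutes $g=0$ and $n_i=k_i+2$. I expect the verification that this $G_0$-action is free and proper --- in particular dealing with symmetric pole configurations and with parabolic or trivial local monodromy --- to be the one genuinely delicate point; everything else is bookkeeping with Riemann--Roch and with the covering map $M$.
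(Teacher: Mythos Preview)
The paper does not give its own proof of this proposition: it is quoted verbatim from \cite[Prop.~8.2]{Allegretti_2020} and attributed to Allegretti and Bridgeland. So there is no ``paper's proof'' to compare against; your attempt stands on its own.

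Your argument is correct and is essentially the natural one (and, as far as can be inferred from the surrounding text, the one in the cited reference): pull back the complex structure of the open set $\mathcal{V}\subset\mathcal{P}(X/B;\mathcal{D})$ through the covering map $M$, then read off the dimension as $\dim\mathcal{T}(S,d)+\operatorname{rank}\mathcal{Q}(X/B;\mathcal{D})$ via Riemann--Roch. The bookkeeping is right. You are also right to isolate the cases $g=0$, $d\in\{1,2\}$ as the only place where something genuine happens: there the Teichm\"uller space is a point but the curve retains a positive-dimensional automorphism group, so the covering description of $\mathcal{P}(\mathbb{S},\mathbb{M})$ in the paper (which presupposes a universal family over $B=\mathcal{T}(S,d)$) does not literally apply, and one must quotient the affine space of projective structures, together with the marking data, by the residual $\operatorname{PGL}(2,\mathbb{C})$-stabiliser. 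Your honest flag that freeness and properness of this action under the hypothesis $|\mathbb{M}|\ge 3$ is the one point requiring verification is accurate; this is exactly where the hypothesis is used, and it is handled in \cite{Allegretti_2020}.
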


According to the relation \eqref{k_iversusn_i}, special cases satisfying $g=0$ and $|\mathbb{M}|\leq2$ correspond to non-singular projective structures and cases when $(n_i)$ is equal, up to a permutation of entries, to $(1)$, $(2)$, $(3)$, $(4)$, $(1,1)$, $(1,2)$, $(2,2)$, $(1,3)$, $(2,3)$ and $(3,3)$ (about those special cases, see \cite[Sec. 6.4]{Allegretti_2020}).

\paragraph{Submanifolds of $\mathcal{P}(\mathbb{S},\mathbb{M})$.} Let us denote by $\mathbb{P}$ the set of marked points points in the interior of $\mathbb{S}$ (i.e. those corresponding to regular singularities). There is a map \[R:\mathcal{P}(\mathbb{S},\mathbb{M})\longrightarrow\mathbb{C}^\mathbb{P}\]
sending a class of marked meromorphic projective structures to the corresponding residue at each regular singularity. This map is a holomorphic submersion \cite[Prop. 8.4]{Allegretti_2020}. We introduce the following submanifolds of $\mathcal{P}(\mathbb{S},\mathbb{M})$. First, assigning the values $(\mu_i)\in\mathbb{C}^\mathbb{P}$ for the residues leads to \[\mathcal{P}(\mathbb{S},\mathbb{M},(\mu_i)):=R^{-1}((\mu_i)_i).\]
Second, if $Z=\cup_{\theta\in\mathbb{N}}\{z\in\mathbb{C}:z-(1-\theta^2)/2=0\}$ (which is a discrete set of points of the real line in $\mathbb{C}$), then
\[\mathcal{P}^\bullet(\mathbb{S},\mathbb{M}):=R^{-1}(\mathbb{C}^\mathbb{P}\smallsetminus Z^\mathbb{P}),\]
is a dense open set in $\mathcal{P}(\mathbb{S},\mathbb{M})$ corresponding to projective structures having non-trivial and non-parabolic local monodromies around regular singular points.
Third, there is an open subset \[\mathcal{P}^\circ(\mathbb{S},\mathbb{M})\] in $\mathcal{P}(\mathbb{S},\mathbb{M})$ corresponding to marked projective structures without apparent singularity \cite[Lem. 8.3]{Allegretti_2020} (it contains $\mathcal{P}^\bullet(\mathbb{S},\mathbb{M})$).

Latter, we will introduce a new notion of "residue" which apply not only to regular singularities but also to irregular ones, and we will consider the submanifolds of $\mathcal{P}^\circ(\mathbb{S},\mathbb{M})$ defined by fixing those residues (cf. \autoref{prop:fix-residues}).

Let us mention the work of Billon \cite{billon2022deformations}, who constructed smooth moduli spaces of branched projective structures, i.e. meromorphic projective structures having only apparent singularities.

\section{A smooth family of opers}

We have seen already that a complex projective structure may be described in several ways: as a projective atlas, as a development-holonomy pair or as a projective connection. Yet another viewpoint turns out to be the most appropriate to develop our arguments: \emph{opers}. A \emph{$\operatorname{PGL}(2,\mathbb{C})$-opers} is a Riccati foliation of a $\mathbb{P}^1$-bundle over a curve, equipped with a transverse section. In the meromorphic case, this leads a priori to consider singular Riccati foliations of a $\mathbb{P}^1$-bundle, equipped with a section that might be tangent or might go through singularities of the foliation. However, in contrast with the holomorphic case, it is not uniquely determined by the projective structure: there can be different birational models \cite{MR3328860}. As observed by Loray and Pereira \cite{MR2337401}, there is however a unique birational model minimizing the polar divisor and such that the section does not intersect the singular set of the foliation. In the second section, we deduce that if the projective structure has no apparent singularity, then the above ambiguity can be resolved, so that there is a bijection between the set of meromorphic projective structures without apparent singularity on a curve $C$ and the set of meromorphic $\operatorname{PGL}(2,\mathbb{C})$-oper on $C$ with minimal polar divisor. Finally, in the third section, we construct a smooth family of such opers.

\subsection{Opers for the group \texorpdfstring{$\operatorname{PGL}(2,\mathbb{C})$}{PGL(2,C)} over a smooth curve \texorpdfstring{$C$}{C}}

Let us define opers, first in the case without poles.

\begin{defi}\label{defi:holo-PGL-oper}
	An \emph{oper for the group $\operatorname{PGL}(2,\mathbb{C})$ on $C$} (or $\operatorname{PGL}(2,\mathbb{C})$-oper on $C$) is defined to be a triple $(\pi:Q\rightarrow C,\mathcal{F},\sigma)$, where
	\begin{itemize}
		\item $\pi:Q\rightarrow C$ is a holomorphic $\mathbb{P}^1$-bundle over C,
		\item $\mathcal{F}$ is a regular holomorphic foliation on $Q$ transverse to the fibers of $\pi$ (i.e. a Riccati foliation, cf. below),
		\item $\sigma:C\rightarrow Q$ is a holomorphic section of the bundle, transverse to the foliation $\mathcal{F}$.
	\end{itemize}
\end{defi}

\begin{figure}[H]
	\centering
	\def\svgwidth{0.6\textwidth}
\begingroup%
  \makeatletter%
  \providecommand\color[2][]{%
    \errmessage{(Inkscape) Color is used for the text in Inkscape, but the package 'color.sty' is not loaded}%
    \renewcommand\color[2][]{}%
  }%
  \providecommand\transparent[1]{%
    \errmessage{(Inkscape) Transparency is used (non-zero) for the text in Inkscape, but the package 'transparent.sty' is not loaded}%
    \renewcommand\transparent[1]{}%
  }%
  \providecommand\rotatebox[2]{#2}%
  \newcommand*\fsize{\dimexpr\f@size pt\relax}%
  \newcommand*\lineheight[1]{\fontsize{\fsize}{#1\fsize}\selectfont}%
  \ifx\svgwidth\undefined%
    \setlength{\unitlength}{491.08856441bp}%
    \ifx\svgscale\undefined%
      \relax%
    \else%
      \setlength{\unitlength}{\unitlength * \real{\svgscale}}%
    \fi%
  \else%
    \setlength{\unitlength}{\svgwidth}%
  \fi%
  \global\let\svgwidth\undefined%
  \global\let\svgscale\undefined%
  \makeatother%
  \begin{picture}(1,0.60078165)%
    \lineheight{1}%
    \setlength\tabcolsep{0pt}%
    \put(0,0){\includegraphics[width=\unitlength,page=1]{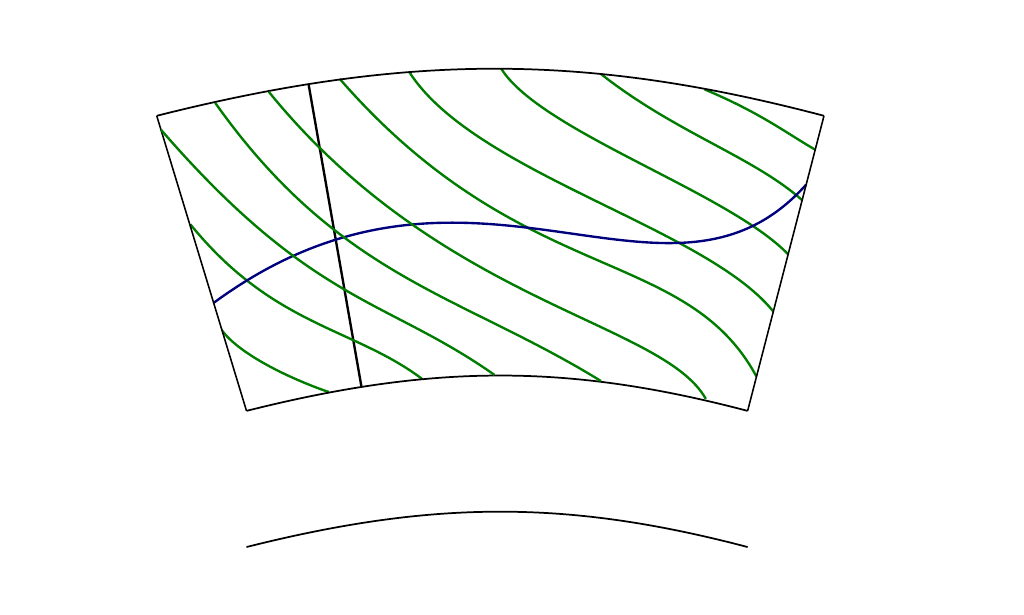}}%
    \put(0.75848321,0.08571218){\color[rgb]{0,0,0}\makebox(0,0)[lt]{\lineheight{1.25}\smash{\begin{tabular}[t]{l}$C$\end{tabular}}}}%
    \put(0.1049744,0.40030167){\color[rgb]{0,0,0}\makebox(0,0)[lt]{\lineheight{1.25}\smash{\begin{tabular}[t]{l}$Q$\end{tabular}}}}%
    \put(0.79864917,0.41156755){\color[rgb]{0,0,0.49019608}\makebox(0,0)[lt]{\lineheight{1.25}\smash{\begin{tabular}[t]{l}$\sigma$\end{tabular}}}}%
    \put(0.4818453,0.21601762){\color[rgb]{0,0,0}\rotatebox{-90}{\makebox(0,0)[lt]{\lineheight{1.25}\smash{\begin{tabular}[t]{l}$\longrightarrow$\end{tabular}}}}}%
    \put(0.59891054,0.27743859){\color[rgb]{0,0.49019608,0}\makebox(0,0)[lt]{\lineheight{1.25}\smash{\begin{tabular}[t]{l}$\mathcal{F}$\end{tabular}}}}%
    \put(0.51941891,0.16625081){\color[rgb]{0,0,0}\makebox(0,0)[lt]{\lineheight{1.25}\smash{\begin{tabular}[t]{l}$\pi$\end{tabular}}}}%
  \end{picture}%
\endgroup%

	\caption{A $\operatorname{PGL}(2,\mathbb{C})$-oper on $C$.}
\end{figure}

This definition makes sense for non-compact curves $C$ as well. This terminology was introduced by Beilinson and Drinfeld \cite{https://doi.org/10.48550/arxiv.math/0501398} (see also \cite{MR2332156}) and is a particular case of a much more general class of objects, including opers for groups such as $\operatorname{GL}(2,\mathbb{C})$ or $\operatorname{SL}(2,\mathbb{C})$. Another formulation of the above definition will be given later, in terms of equivalence classes of opers for the group $\operatorname{GL}(2,\mathbb{C})$ (cf. \autoref{defi:GL-oper}).

	\paragraph{Opers and projective structures.} Let $C$ be a smooth curve, and $P$ be a projective structure on $C$, with its projective atlas denoted by $(U_i,\varphi_i)_i$.
	
	Then, the coordinate changes $\varphi_{ij}:=\varphi_i\circ\varphi_j^{-1}:\varphi_j(U_i\cap U_j)\rightarrow\varphi_i(U_i\cap U_j)$ are restrictions of Möbius transformations and satisfy, on $\varphi_j(U_i\cap U_k\cap U_j)$, the cocycle relation $\varphi_{ik}\circ\varphi_{kj}=\varphi_{ij}$. Whence, we can define a map $g_{ij}$ with value in $\operatorname{PGL}(2,\mathbb{C})$: to $x\in U_i\cap U_j$ we associate the automorphism $g_{ij}(x)$ of $\mathbb{P}^1$ whose restriction equals $\varphi_{ij}$. The $g_{ij}$ also satisfy a cocycle relation, hence they define a $\mathbb{P}^1$-bundle \[\pi:Q=\sqcup_i(U_i\times\mathbb{P}^1)/((j,x,y)\sim (i,x,g_{ij}(x)\cdot y)) \longrightarrow C.\]
	
	By construction, the $g_{ij}$ are locally constant thus the transition functions of the bundle $\pi$ do not depend on the variable $x$ and it makes sense to speak of the horizontal foliation on a trivial chart of $\pi$. This foliation descends to a foliation $\mathcal{F}$ on $P$ transverse to the fibers of $\pi$.
	
	Furthermore, the graph of each projective chart $\varphi_i$ defines a local section on $U_i\times\mathbb{P}^1$. The open cover and the gluing data of the bundle $\pi$ correspond exactly to those of the projective atlas of $P$, so that projective charts define a global section $\sigma$ of $\pi$. This section is transverse to the foliation $\mathcal{F}$ because the derivatives of the $\varphi_i$ never vanish.
	
	\begin{figure}[H]
		\centering
		\def\svgwidth{0.8\textwidth}
		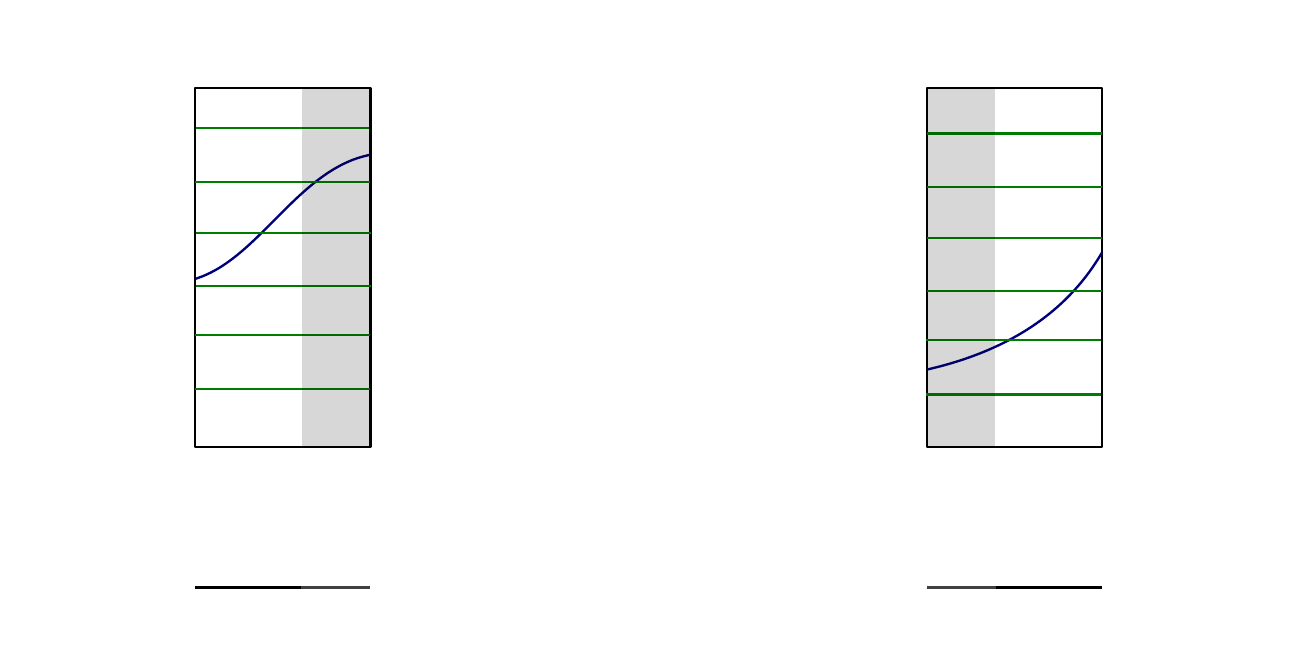
		\caption{The gluing construction.}
	\end{figure}
	
	Finally, we have constructed a $\operatorname{PGL}(2,\mathbb{C})$-oper $(\pi:Q\rightarrow C,\mathcal{F},\sigma)$ on $C$.
	
	Let us describe the inverse construction. Given the triple $(\pi:Q\rightarrow C,\mathcal{F},\sigma)$, we can recover the projective structure $P$ in the following way. First, the flow box theorem ensures that on the trivialization charts there exists a bundle isomorphism sending the Riccati foliation to the horizontal one. (Concretely, this bundle isomorphism may be described using the sharp $3$-transitivity of the action of the group of Möbius transformations on $\mathbb{P}^1$: choose three distinct leaves and send them to the constant sections $\{y= 0\}$, $\{y=1\}$ and $\{y=\infty\}$). Then, restricted to the trivializing open sets, the graph of the section $\sigma$ defines a projective charts. Finally, the compatibility between charts follows from the gluing data of the bundle. (This projective structure induces the complex structure $C$ because the section is analytic).
	
	Let us formulate this correspondence as a proposition.

\begin{prop}\label{indigenousbundle}
	The above correspondence defines a bijection between the set of projective structures on $C$ and the set $\operatorname{PGL}(2,\mathbb{C})$-opers on $C$.
\end{prop}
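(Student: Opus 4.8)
The plan is to take the two constructions exhibited just above the statement as given, and to verify only that (i) each of them is well defined and (ii) they are mutually inverse. Here ``the set of $\operatorname{PGL}(2,\mathbb{C})$-opers on $C$'' is read up to isomorphism of opers (a biholomorphism over the identity of $C$ matching the two foliations and the two sections), while on the projective side one always works with the \emph{maximal} atlas, so that no ambiguity of choice remains there either.

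First I would check that a projective structure $P$ produces a genuine oper: since the transition maps $\varphi_{ij}$ are restrictions of Möbius transformations, the associated cocycle $(g_{ij})$ takes values in $\operatorname{PGL}(2,\mathbb{C})$ and is locally constant along the base, so the horizontal foliations of the charts $U_i\times\mathbb{P}^1$ glue to a regular foliation $\mathcal{F}$ transverse to the fibres, and the graphs of the $\varphi_i$ glue to a section $\sigma$ that is transverse to $\mathcal{F}$ because each $\varphi_i'$ is nowhere zero. Carrying this out from the whole maximal atlas at once shows that the oper depends on no auxiliary choice (and that a sub-atlas yields a canonically isomorphic one).

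Next I would treat the reverse construction, which is the only step requiring genuine analysis. Over a small $U\subset C$ the flow-box theorem provides a trivialization $Q|_U\simeq U\times\mathbb{P}^1$ straightening $\mathcal{F}$ to the horizontal foliation (for instance via the sharp $3$-transitivity normalization recalled above); in such a trivialization $\sigma$ becomes a holomorphic map $U\to\mathbb{P}^1$ which is an immersion exactly by transversality, hence a chart. The key observation is that on an overlap two such trivializations differ by a bundle automorphism of $(U\cap V)\times\mathbb{P}^1$ preserving the horizontal foliation, and any such automorphism is $x\mapsto h$ with $h\in\operatorname{Aut}(\mathbb{P}^1)$ independent of $x$ on each connected component; so the two charts differ by a single Möbius transformation, the collection of all such charts is a projective atlas, and its maximal extension is the structure we attach to the oper — independent of all flow-box choices by maximality.

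Finally I would check that the two constructions invert one another. For structure $\to$ oper $\to$ structure: the trivialization attached to a chart $\varphi_i$ of $P$ already straightens $\mathcal{F}$ and presents $\sigma$ as the graph of $\varphi_i$, so $\varphi_i$ is among the recovered charts; since moreover every recovered chart is projectively compatible with the $\varphi_i$ by the previous step, the recovered (maximal) structure both contains $P$ and is contained in $P$, hence equals $P$. For oper $\to$ structure $\to$ oper: the recovered structure has transition maps equal to the locally constant transition functions $g_{ij}$ of $\pi$ in straightening trivializations, so rebuilding a bundle from it reproduces the same cocycle; the resulting bundle isomorphism identifies the straightening trivializations, hence carries the horizontal foliation to $\mathcal{F}$ and the graph of $\varphi_i$ to $\sigma$, i.e.\ it is an isomorphism of opers. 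The hard part will be the local straightening step together with the observation that invariance of the horizontal foliation forces base-independent transition data; everything else is cocycle bookkeeping and an appeal to maximality.
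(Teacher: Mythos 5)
Your proposal is correct and follows essentially the same route as the paper: locally constant $\operatorname{PGL}(2,\mathbb{C})$-cocycles from the projective transition maps in one direction, and flow-box straightening (via $3$-transitivity) plus the transversality-equals-immersion observation in the other. You spell out the mutual-inverse verification and the independence of choices more explicitly than the paper does, but the underlying argument is the same.
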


Note that the conjugacy class of the monodromy representations of $\mathcal{F}$ and the associated projective structure are the same, because transition maps of the projective atlas are the same as cocycles of the bundle.

The isomorphism class of the couple $(\pi:Q\rightarrow C,\mathcal{F})$ can be recovered from the data of the complex curve $C$ together with its monodromy representation up to conjugacy. This is the classical Riemann-Hilbert correspondence, which in this context amounts to the suspension of the monodromy representation.\\

In a way analogous to \autoref{indigenousbundle}, meromorphic projective structures correspond to \emph{meromorphic opers}. The rest of the present section is devoted to the definition of those objects and to the clarification of their relation to meromorphic projective structures.

\paragraph{Singular Riccati foliations.} Let $\pi:Q\rightarrow C$ be a holomorphic $\mathbb{P}^1$-bundle. A singular holomorphic foliation $\mathcal{F}$ on $Q$ is said to be \emph{Riccati} with respect to $\pi$ if it is transverse to the generic fiber of $\pi$. Let $(U,x)$ be a coordinate chart of $C$ such that the bundle is trivial over $U$, i.e. $\pi^{-1}(U)\simeq U\times\mathbb{P}^1$, and choose a standard coordinate $y$ on $\mathbb{P}^1$. Then, is these coordinates the foliation $\mathcal{F}$ is described by a Pfaffian equation of the form
\begin{equation}
d(x)dy+[a(x)y^2+b(x)y+c(x)]dx=0,
\end{equation}
where $a,b,c$ and $d$ are holomorphic functions and $d$ is non-identically vanishing (a detailed study of Riccati foliations is exposed in \cite[Chap. 4.1]{MR3328860}; see also \cite[Prop. VIII.1.1]{MR3494804}). A fiber of $\pi$ over a point corresponding to a zero of $d$ is invariant by $\mathcal{F}$, whereas fibers over points where $d$ is non-zero are transverse to $\mathcal{F}$. The set of singularities $\operatorname{sing}(\mathcal{F})$ of the foliation is contained in the union of invariant fibers. If $F$ is an invariant fiber, then the set $\operatorname{sing}(\mathcal{F})\cap F$ contains one or two elements since they are obtained by solving a quadratic equation.

Dividing by $d$, we obtain a Riccati equation (hence the terminology):
\begin{equation}\label{riccati}
\mathcal{F}_{|U}:dy+y^2\alpha +y\beta+\gamma=0,
\end{equation} where $\alpha,\beta$ and $\gamma$ are meromorphic $1$-forms on $U$. Note that equation \eqref{riccati} also determines $\mathcal{F}$: the leaves are obtained as the graphs of its solutions -- this is possible only outside fibers corresponding to the poles of $\alpha,\beta$ and $\gamma$, but then the resulting foliation extends uniquely outside a codimension $1$ analytic subset (this is basic foliation theory, see for instance \cite[Prop. 27]{loray2} or \cite[Thm. 2.20]{MR2363178}). Invariant fibers correspond exactly to the poles of $\alpha,\beta$ and $\gamma$.

\begin{defi}\label{def-Riccati-polar-divisor}
	To a singular Riccati foliation $(\pi:Q\rightarrow C,\mathcal{F})$, we can associate a \emph{polar divisor} on $C$ defined as the direct image by $\pi$ of the tangency divisor between $\mathcal{F}$ and the foliation induced by the fibers of $\pi$.
\end{defi}

A Möbius transformation fixes the point at infinity in the projective line if and only if it is affine. Recall that the affine group is generated by translations and dilatations by a non-zero complex number. Affine holomorphic gauge transformations act on the Riccati equation as follow. A pullback by a gauge transformation of the form $y=a(x)\tilde{y}$ transforms the Riccati equation \eqref{riccati} into\begin{equation}\label{action-gauge-multiplication}
d\tilde{y}+a\tilde{y}^2\alpha+\tilde{y}\left(\beta+\frac{da}{a}\right)+\frac{\gamma}{a}=0
\end{equation}
whereas a pullback by a gauge transformation of the form $y=\tilde{y}+b(x)$ transforms it into \begin{equation}\label{action-gauge-translation}
d\tilde{y}+\tilde{y}^2\alpha+\tilde{y}(\beta+2b\alpha)+(b^2\alpha+db+b\beta+\gamma)=0.
\end{equation}

The following proposition gives the relation between Riccati foliations and the quadratic differentials associated with projective structures.

\begin{prop}[see Prop. 2.1 in \cite{MR2647972}]\label{foliation-vs-schwarzianeq}
	Let $(\pi:Q\rightarrow C,\mathcal{F},\sigma)$ be a $\operatorname{PGL}(2,\mathbb{C})$-oper defining a projective structure $P$ on a smooth curve $C$. Let $x$ be a coordinate chart of $C$ defined on an open set $U$ over which the bundle $\pi$ is trivial\footnote{In fact, it is trivial on any proper open subset $U\subsetneq C$, because $\pi$ is the projectivization of a rank $2$ vector bundle, which is trivial on any non-compact curve by Grauert-Röhrl theorem.}. Then
	\begin{itemize}
		\item[(i)] there exists a unique coordinate chart $y$ on $\mathbb{C}_y\subset\mathbb{P}^1$ in which the triple takes the normal form \begin{equation}\label{normal-form}
		\begin{cases}
		\mathcal{F}_{|U}:dy+\left(y^2+\frac{q(x)}{2}\right)dx=0\textmd{ with $q\in\mathcal{O}(U)$}\\
		\sigma_{|U}:\{y=\infty\}.
		\end{cases}
		\end{equation}
		\item[(ii)] Moreover, on $U$, the projective charts of the structure $P$ are exactly the solutions of the Schwarzian equation $\mathcal{S}_x(\varphi)=q$.
	\end{itemize}
\end{prop}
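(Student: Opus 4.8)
The plan is to reduce an arbitrary local trivialization of the oper to the claimed normal form by successively applying affine gauge transformations of the two types recorded in \eqref{action-gauge-multiplication} and \eqref{action-gauge-translation}, then composing with a final transformation to move the section $\sigma$ to $\{y=\infty\}$, and finally matching the resulting Riccati coefficient with the Schwarzian derivative. First I would choose a trivialization $\pi^{-1}(U)\simeq U\times\mathbb{P}^1$ of the bundle (possible since $U\subsetneq C$, as in the footnote) in which the section $\sigma$ is the graph of a holomorphic function $y=s(x)$; the translation gauge transformation $y\mapsto\tilde y+s(x)$ puts $\sigma$ at $\{y=0\}$. The foliation is then given by a Riccati equation $dy+(a(x)y^2+b(x)y+c(x))\,dx=0$ with $a,b,c$ holomorphic on $U$ (no invariant fibers over $U$: the original oper is a \emph{regular} Riccati foliation, so we may shrink $U$ to avoid them, or rather there are none at all in the non-singular case), and transversality of $\sigma=\{y=0\}$ to $\mathcal{F}$ forces $c(x)\neq 0$ everywhere on $U$.

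Next I would exploit the fact that $\sigma$ is transverse to $\mathcal{F}$ and that the bundle is the \emph{projectivized} bundle of the oper: transversality of $\{y=0\}$ means $c$ is nowhere vanishing, so after the multiplicative gauge $y\mapsto a(x)\tilde y$ for a suitable holomorphic unit I can normalize the coefficient of $y^2$ and the constant term to be negatives of one another, or more simply I first swap to the chart $w=1/y$ to send $\sigma$ to $\{w=\infty\}$, obtaining $dw+(c(x)w^2 + \dots)dx=0$; a multiplicative gauge then rescales $c(x)$ to $1$, giving $dw + (w^2 + \beta(x)w + \gamma(x))dx=0$ with $\sigma=\{w=\infty\}$. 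Finally a translation $w\mapsto w - \beta(x)/2$ kills the linear term (using \eqref{action-gauge-translation}), yielding $dy + (y^2 + \frac{q(x)}{2})dx = 0$ for some holomorphic $q$, and $\sigma=\{y=\infty\}$. Uniqueness of the chart $y$ follows because the only Möbius transformations preserving $\{y=\infty\}$ are affine, and tracking their action on \eqref{action-gauge-multiplication}--\eqref{action-gauge-translation} shows that to preserve the shape "$y^2 + q/2$ with no linear or affine-in-$y$ freedom left" the transformation must be the identity (the dilatation must be $1$ to keep the $y^2$ coefficient, and then the translation must be $0$ to keep the linear term absent).

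For part (ii), I would recall the classical computation: a leaf of $dy + (y^2 + q/2)dx = 0$ through the section, i.e. a solution $y(x)$, is exactly $y = -\varphi''/(2\varphi') = -(\log \varphi')'/2$ (up to the standard normalization) where $\varphi$ is a quotient of two independent solutions of the companion system \eqref{companion-system} / the scalar equation \eqref{second-order-reduced-scalar-eq}; substituting and differentiating shows $y' = -y^2 - q/2$ is equivalent to $\mathcal{S}_x(\varphi) = q$. Since the projective charts of $P$ are, by the construction preceding \autoref{indigenousbundle}, obtained by straightening the foliation and reading off the section, and straightening the normal-form foliation is precisely integrating the Riccati equation (whose solutions are the quotients $\varphi$ of solutions of \eqref{second-order-reduced-scalar-eq}), the charts of $P$ over $U$ are exactly the solutions of $\mathcal{S}_x(\varphi)=q$. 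The main obstacle is bookkeeping rather than conceptual: one must carefully check that the gauge reductions are all \emph{holomorphic} and globally defined on $U$ (this uses nowhere-vanishing of $c$, which is transversality, and simple-connectedness or at least triviality of relevant line bundles on the non-compact $U$ for taking the square root / logarithm in the multiplicative gauge), and that the normalization that kills the linear term does not reintroduce a constant term depending on the section — a short but attention-demanding verification with \eqref{action-gauge-multiplication} and \eqref{action-gauge-translation}.
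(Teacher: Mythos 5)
Your proposal is correct and follows essentially the same route as the paper: for (i) you send $\sigma$ to $\{y=\infty\}$, use transversality to make the quadratic coefficient nowhere vanishing, and normalize it and then the linear term by the affine gauge transformations \eqref{action-gauge-multiplication}--\eqref{action-gauge-translation}, with uniqueness coming from the rigidity of affine gauges preserving the normal form. The only cosmetic differences are in (ii), where the paper starts from the flow-box picture (section $=$ graph of a chart $\varphi$) and gauges it into the normal form to read off $q=\mathcal{S}_x(\varphi)$ via the uniqueness of (i), whereas you integrate the normal-form Riccati equation directly, and in your worry about square roots in the multiplicative gauge, which is unnecessary since by \eqref{action-gauge-multiplication} the quadratic coefficient is multiplied by $a$ itself rather than $a^2$.
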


Note that, after the substitution $y=-1/\tilde{y}$, we see that the same is true for the form \begin{equation}
\begin{cases}
\mathcal{F}_{|U}:d\tilde{y}+\left(\frac{q(x)}{2}\tilde{y}^2+1\right)dx=0\textmd{ with $q\in\mathcal{O}(U)$}\\
\sigma_{|U}:\{\tilde{y}=0\}.
\end{cases}
\end{equation}
which is the projectivization $\tilde{y}=\tilde{y}_1/\tilde{y}_2$ of the companion system of equation \eqref{second-order-reduced-scalar-eq},
\begin{equation}\label{system}
d\tilde{Y}+\begin{pmatrix}
0 & -1\\
q(x)/2 & 0
\end{pmatrix}\tilde{Y}dx=0,
\end{equation}
with the section $\sigma$ corresponding to the line subbundle $L=\braket{^t(0,1)}$. See \cite[Rk. 3.3(ii)]{Allegretti_2020} for the corresponding computation directly on the rank $2$ connection side.

\begin{proof}[Proof of \autoref{foliation-vs-schwarzianeq}]
	Let $U$ be an open set of $C$ such that the bundle $\pi$ is trivial over $U$.
	
	\emph{(i)} By transitivity of Möbius transformations, there exist trivializing coordinates $(x,y)$ on $\pi^{-1}(U)\simeq U\times\mathbb{P}^1$ in which $\sigma_{|U}=\{y=\infty\}$. As above, the Pfaffian equation reads \[\mathcal{F}_{|U}:dy+y^2\alpha +y\beta+\gamma=0,\] where $\alpha,\beta$ and $\gamma$ are holomorphic $1$-forms on $U$, with $\alpha$ non-vanishing (due to the transversality between the section and the foliation). The action of affine holomorphic gauge transformations (i.e. those fixing the section at infinity) on the Riccati equation is such that $\alpha$ and $\beta$ can be arbitrarily chosen, except $\alpha$ must remain non-vanishing (see equations \eqref{action-gauge-multiplication} and \eqref{action-gauge-translation}). After such a choice of $\alpha$ and $\beta$ is made, no non-trivial gauge transformation leaves them unchanged. Hence $\gamma$ is uniquely determined by the projective structure $P$. Choose $\alpha=dx$, $\beta=0$ and denote $\gamma=q/2dx$. (The factor $1/2$ is a matter of convention: if it wasn't written here, a factor $2$ would pop up in the Schwarzian equation of point \emph{(ii)} anyway). In practice, one can first normalize the quadratic coefficient to $1$ via $y=a(x)\tilde{y}$, then cancel the linear coefficient via $y=\tilde{y}+b(x)$ (see equations \eqref{action-gauge-multiplication} and \eqref{action-gauge-translation}).
	
	\emph{(ii)} Just as in the proof of \autoref{indigenousbundle}, we can find coordinates $(x,y)$ on $U\times\mathbb{P}^1$ in which the foliation $\mathcal{F}$ is given by $y'=0$ (i.e. horizontal) and the section $\sigma$ defines a projective chart $\varphi=\sigma_{|U}$.
	After a gauge transformation sending the section to infinity
	\[(x,y)\longmapsto(x,y_1):=\left(x,\frac{1}{y-\varphi}\right),\]
	the equation takes on the form $y_1'-\varphi'y_1^2=0$.
	Next, we apply change of the $y_1$ coordinate preserving $\{y_1=\infty\}$: those are exactly the elements of the affine group. First, the transformation
	\[(x,y_1)\longmapsto(x,y_2):=\left(x,-\varphi'y_1\right)\]
	followed by a multiplication of the equation by $-\varphi'$ normalizes the coefficient of $y_2^2$ to $1$ without perturbing the coefficient of $y_2'$. Second, the transformation \[(x,y_2)\longmapsto(x,y_3):=\left(x,y_2-\frac{\varphi''}{2\varphi'}\right)\] annihilates the coefficient of $y_3$ and we get
	\[y_3'+y_3^2+\frac{\mathcal{S}_x(\varphi)}{2}=0.\]
	The uniqueness in \emph{(i)} yields $\mathcal{S}_x(\varphi)=q$.
	
	Finally, two solutions of the Schwarzian equation differ from a Möbius transformation (compare \eqref{schwarzian-basic-property}), hence the conclusion.
\end{proof}

\paragraph{Meromorphic $\operatorname{PGL}(2,\mathbb{C})$-opers.}

\begin{defi}\label{def:mero-PGL-oper}
	A \emph{meromorphic $\operatorname{PGL}(2,\mathbb{C})$-oper on $C$} is a triple of the form $(\pi:Q\rightarrow C,\mathcal{F},\sigma)$, where
	\begin{itemize}
		\item $\pi:Q\rightarrow C$ is a holomorphic $\mathbb{P}^1$-bundle over C,
		\item $\mathcal{F}$ is a singular holomorphic foliation on $Q$ transverse to the generic fibers of $\pi$ (a singular Riccati foliation on $Q$),
		\item $\sigma:C\rightarrow Q$ is a holomorphic section of the bundle, transverse to the foliation $\mathcal{F}$.
	\end{itemize}
\end{defi}

\begin{figure}[H]
	\centering
	\def\svgwidth{0.6\textwidth}
\begingroup%
  \makeatletter%
  \providecommand\color[2][]{%
    \errmessage{(Inkscape) Color is used for the text in Inkscape, but the package 'color.sty' is not loaded}%
    \renewcommand\color[2][]{}%
  }%
  \providecommand\transparent[1]{%
    \errmessage{(Inkscape) Transparency is used (non-zero) for the text in Inkscape, but the package 'transparent.sty' is not loaded}%
    \renewcommand\transparent[1]{}%
  }%
  \providecommand\rotatebox[2]{#2}%
  \newcommand*\fsize{\dimexpr\f@size pt\relax}%
  \newcommand*\lineheight[1]{\fontsize{\fsize}{#1\fsize}\selectfont}%
  \ifx\svgwidth\undefined%
    \setlength{\unitlength}{491.08856441bp}%
    \ifx\svgscale\undefined%
      \relax%
    \else%
      \setlength{\unitlength}{\unitlength * \real{\svgscale}}%
    \fi%
  \else%
    \setlength{\unitlength}{\svgwidth}%
  \fi%
  \global\let\svgwidth\undefined%
  \global\let\svgscale\undefined%
  \makeatother%
  \begin{picture}(1,0.60078165)%
    \lineheight{1}%
    \setlength\tabcolsep{0pt}%
    \put(0,0){\includegraphics[width=\unitlength,page=1]{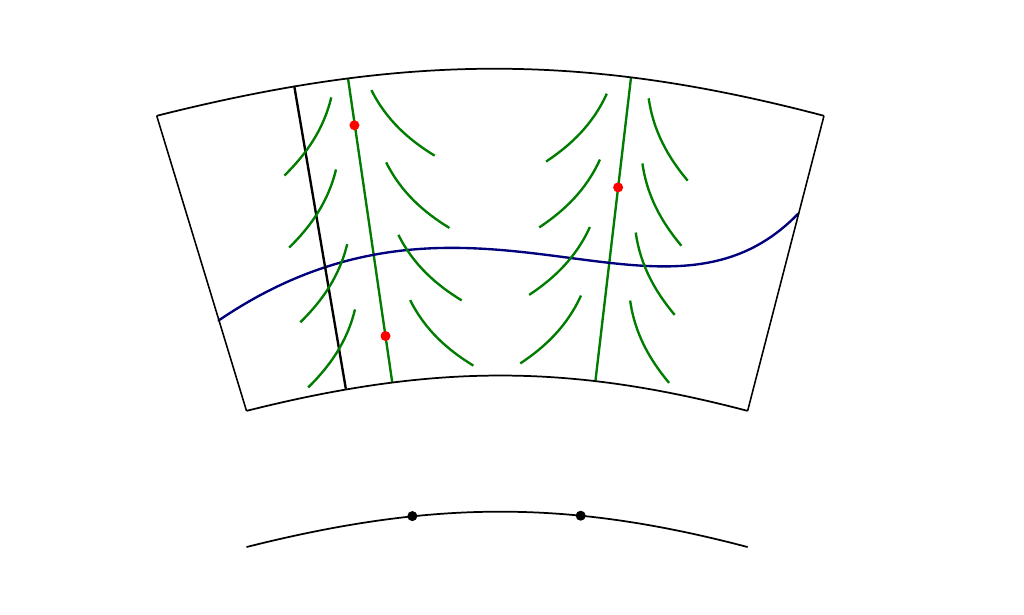}}%
    \put(0.75848321,0.08571197){\color[rgb]{0,0,0}\makebox(0,0)[lt]{\lineheight{1.25}\smash{\begin{tabular}[t]{l}$C$\end{tabular}}}}%
    \put(0.79813286,0.40072604){\color[rgb]{0,0,0.49019608}\makebox(0,0)[lt]{\lineheight{1.25}\smash{\begin{tabular}[t]{l}$\sigma$\end{tabular}}}}%
    \put(0.4818453,0.21056817){\color[rgb]{0,0,0}\rotatebox{-90}{\makebox(0,0)[lt]{\lineheight{1.25}\smash{\begin{tabular}[t]{l}$\longrightarrow$\end{tabular}}}}}%
    \put(0.11015068,0.41568474){\color[rgb]{0,0,0}\makebox(0,0)[lt]{\lineheight{1.25}\smash{\begin{tabular}[t]{l}$Q$\end{tabular}}}}%
    \put(0.52937541,0.15348003){\color[rgb]{0,0,0}\makebox(0,0)[lt]{\lineheight{1.25}\smash{\begin{tabular}[t]{l}$\pi$\end{tabular}}}}%
    \put(0.66500466,0.24253348){\color[rgb]{0,0.49019608,0}\makebox(0,0)[lt]{\lineheight{1.25}\smash{\begin{tabular}[t]{l}$\mathcal{F}$\end{tabular}}}}%
  \end{picture}%
\endgroup%

	\caption{A meromorphic $\operatorname{PGL}(2,\mathbb{C})$-oper on $C$.}
\end{figure}

The \emph{polar divisor} of a meromorphic $\operatorname{PGL}(2,\mathbb{C})$-oper on $C$ is defined to be the one of the associated singular Riccati foliation (cf. \autoref{def-Riccati-polar-divisor}).

\paragraph{"Singular" triples $(\pi,\mathcal{F},\sigma)$ associated with a meromorphic projective structure.} Let $P$ be a meromorphic projective structure on $C$ described by a pair $(P_0,\phi)$, following the notations of \autoref{meromorphicprojectivestructure}. Consider the trivial $\mathbb{P}^1$-bundle over $C$, a standard coordinate $y$ on $\mathbb{P}^1$, and the holomorphic section $\sigma=\{y=\infty\}$. Suppose that, in the charts of a projective atlas $(U_i,x_i)$ of $P_0$, the quadratic differential reads $\phi=\frac{q_i(x_i)}{2}dx_i^{\otimes2}$. Define \begin{equation}\label{construction-oper}
\mathcal{F}_i:dy+\left(y^2+\frac{q_i(x_i)}{2}\right)dx_i=0.
\end{equation}
Since $x_i$ is a projective chart for $P_0$, those expressions glue together to give a singular Riccati foliation $\mathcal{F}$ on the trivial $\mathbb{P}^1$-bundle over $C$. We get a triple $(\pi,\mathcal{F},\sigma)$, where $\pi$ is the trivial bundle, which by \autoref{foliation-vs-schwarzianeq} \emph{defines} the projective structure $P=P_0+\phi$ in the sense that we can recover a projective atlas outside of the polar locus just as in \autoref{indigenousbundle}.

Consider a triple $(\pi:Q\rightarrow C,\mathcal{F},\sigma)$, where $\pi$ is a holomorphic $\mathbb{P}^1$-bundle on $C$, $\mathcal{F}$ is a singular Riccati foliation on $\pi$ and $\sigma$ is a holomorphic section of $\pi$ generically transverse to $\mathcal{F}$. We say that the triple \emph{defines} a meromorphic projective structure $P$ on $C$ if we can recover a projective atlas outside of the polar locus just as in \autoref{indigenousbundle}.

However, in contrast with the latter situation, triples not holomorphically equivalents may define the same meromorphic projective structure on $C$. The object we are dealing with here is really \emph{the triple $(\pi:Q\longrightarrow C,\mathcal{F},\sigma)$ modulo gauge transformations of the bundle that are bimeromorphic and such that their indeterminacy locus are contained in the fibers above poles of the projective structure}.

\begin{prop}\label{foliation-vs-schwarzianeq-mero}
	\autoref{foliation-vs-schwarzianeq} holds for meromophic projective structures, if we allow bimeromorphic gauge transformations and replace $q\in\mathcal{O}(U)$ by $q\in\mathcal{M}(U)$.
\end{prop}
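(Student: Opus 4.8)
The plan is to re-run the proof of \autoref{foliation-vs-schwarzianeq} over a trivializing coordinate chart $(U,x)$, now allowing $U$ to meet the polar locus $\Sigma$ of $P$, and to keep track of where holomorphy is actually used, so as to see that every normalization survives at the cost of passing to a birational model whose indeterminacy lies in the fibres over $\Sigma$. Throughout, write $U^{*}=U\smallsetminus\Sigma$.

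I would first invoke the defining property of the triple $(\pi:Q\rightarrow C,\mathcal F,\sigma)$: over $U^{*}$ its restriction is an honest $\operatorname{PGL}(2,\mathbb C)$-oper (the foliation is regular there and the section is transverse), so \autoref{foliation-vs-schwarzianeq} applies verbatim on $U^{*}$ and yields a distinguished fibre coordinate $y$ in which $\mathcal F|_{U^{*}}\colon dy+(y^{2}+q/2)\,dx=0$ and $\sigma|_{U^{*}}=\{y=\infty\}$, with $q\in\mathcal O(U^{*})$, and moreover $\mathcal S_{x}(\varphi)=q$ for every projective chart $\varphi$ of $P$ on $U^{*}$. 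Since $P=P_{0}+\phi$ with $P_{0}$ holomorphic and $\phi$ a meromorphic quadratic differential, and since the Schwarzian of a chart of $P_{0}$ is holomorphic on $U$, the function $q$ extends meromorphically across $\Sigma\cap U$, with poles only along $\Sigma$; this is the $q\in\mathcal M(U)$ of the statement, and point (ii) is then nothing more than the definition of a meromorphic projective structure as being determined by its restriction to $C\smallsetminus\Sigma$.

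It remains to realize the normal form by a bimeromorphic gauge transformation of $\pi^{-1}(U)$ and to establish uniqueness of $q$; here I would follow step by step the proof of \autoref{foliation-vs-schwarzianeq}(i). Since $\sigma$ is a holomorphic section of the trivial bundle $Q|_{U}$, sending it to $\{y=\infty\}$ costs at most a Möbius gauge transformation with meromorphic coefficients (holomorphic where $\sigma$ avoids $\infty$); afterwards $\mathcal F$ reads $dy+y^{2}\alpha+y\beta+\gamma=0$ with $\alpha,\beta,\gamma$ meromorphic $1$-forms on $U$. The key point is that generic transversality of $\sigma$ to $\mathcal F$ is genuine transversality over $U^{*}$, so $\alpha$ has neither a zero nor a pole on $U^{*}$; hence the dilation $y=a\tilde y$ with $a\in\mathcal M(U)$ normalizing $\alpha$ to $dx$, followed by the translation $y=\tilde y+b$ with $b\in\mathcal M(U)$ killing the linear coefficient — the meromorphic instances of \eqref{action-gauge-multiplication} and \eqref{action-gauge-translation} — are bimeromorphic with indeterminacy contained in the fibres over $\Sigma$, and they bring the triple to the form \eqref{normal-form} with $q\in\mathcal M(U)$. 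Uniqueness of $q$ (equivalently, of the resulting coordinate $y$) follows as in the non-singular case: inspection of \eqref{action-gauge-multiplication} and \eqref{action-gauge-translation} shows that the only bimeromorphic gauge transformation preserving $\{y=\infty\}$, $\alpha=dx$ and $\beta=0$ is the identity; and by the uniqueness in \autoref{foliation-vs-schwarzianeq} this $q$ restricts on $U^{*}$ to the one found above, so the two constructions agree.

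I expect the only point requiring genuine care — rather than routine bookkeeping — to be the verification that the gauge transformations used are bimeromorphic \emph{with indeterminacy locus contained in the fibres above the poles of $P$}, which is exactly what makes the output an admissible birational model in the sense of the discussion preceding the proposition; as indicated, this reduces precisely to $\alpha$ having no zero or pole on $U^{*}$, i.e. to generic transversality of $\sigma$ away from $\Sigma$. One should also note that $\sigma$ may well pass through $\operatorname{sing}(\mathcal F)$ or through $\{y=\infty\}$ over points of $\Sigma$, but this is harmless: such behaviour is already permitted by \eqref{normal-form} as soon as $q$ has a pole, and it does not occur over $U^{*}$.
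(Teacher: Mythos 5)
Your argument is correct and is essentially the proof the paper intends: the paper states this proposition without a separate proof, relying on the observation that the normalizations in the proof of \autoref{foliation-vs-schwarzianeq} (sending $\sigma$ to $\{y=\infty\}$, then the dilation and translation of equations \eqref{action-gauge-multiplication}--\eqref{action-gauge-translation}) go through verbatim with meromorphic coefficients, the only change being that the resulting gauge transformations are bimeromorphic with indeterminacy in the fibres over $\Sigma$. Your extra bookkeeping --- genuine transversality of $\sigma$ over $U^{*}$ forcing $\alpha$ to be a unit there, and the meromorphic extension of $q$ across $\Sigma$ via $P=P_{0}+\phi$ --- is exactly the verification that makes this routine.
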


Note that, when the triple is locally expressed in a form (\ref{normal-form}), but with $q$ possibly having a pole at $x=0$, we see by the change of variable $Y=1/y$ that $q$ admits a pole of order at least $1$ if and only if $\sigma:=\{y=\infty\}$ intersects the singular set of $\mathcal{F}$. (This is the unique singular point on the invariant fiber). Hence the section $\sigma$ is merely \emph{generically} transverse to the foliation $\mathcal{F}$: it is transverse above points in $C^*$, and it goes through singularities of $\mathcal{F}$ above poles of $P$. Thus, we are not dealing with an oper. We can reduce to the situation where the section is \emph{everywhere} transverse to the foliation by applying bimeromorphic gauge transformations of the bundle, though. It is the object of \autoref{prop-transversality-of-sigma} in the next section.

\subsection{A unique meromorphic oper associated with a meromorphic projective structure without apparent singularity: the minimal birational model}

\begin{defi}
	Let $\pi:Q\rightarrow C$ be a $\mathbb{P}^1$-bundle and $q\in Q$. The blow up of $q$, composed with the contraction (which is possible thanks to Castelnuovo's criterion) of the proper (or strict) transform of the fiber $\pi^{-1}(\pi(q))$ is called the \emph{elementary transformation centered at $q$} and denoted by $\operatorname{elem}_q$.
\end{defi}

An elementary transformation is a birational transformation, and its inverse is an elementary transformation. The image of a $\mathbb{P}^1$-bundle by an elementary transformation is again a holomorphic $\mathbb{P}^1$-bundle. Every $\mathbb{P}^1$-bundle over $C$ can be obtained from the trivial bundle $C\times\mathbb{P}^1\rightarrow C$ by some successive elementary transformations (see~\cite[p. 229]{MR1392959}). All birational transformations between $\mathbb{P}^1$-bundles over $C$ are obtained by composing elementary transformations.

For instance, consider the trivial bundle $\mathbb{D}\times\mathbb{P}^1\rightarrow\mathbb{D}$, with coordinate $x$ on the disc, homogeneous coordinates $[y_1:y_2]$ on the projective line. Then, $\operatorname{elem}_{(0,[y_1:0])}(x,[y_1,y_2])=(x,[xy_1,y_2])$.

Let $\sigma$ be a section of the $\mathbb{P}^1$-bundle $\pi$. After the elementary transformation $\operatorname{elem}_q$, the self-intersection number of the strict transform $\tilde{\sigma}$ of $\sigma$ satisfies
\[\tilde{\sigma}\cdot\tilde{\sigma}=\begin{cases}
\sigma\cdot\sigma + 1\text{ if }q\notin\sigma\\
\sigma\cdot\sigma - 1\text{ if }q\in\sigma.
\end{cases}\]

\begin{prop}\label{prop-transversality-of-sigma}
	Let $P$ be a meromorphic projective structure on $C$, and let $(\pi:Q\rightarrow C,\mathcal{F},\sigma)$ be a triple defining it. There exists a $\operatorname{PGL}(2,\mathbb{C})$-oper $(\tilde{\pi}:\tilde{Q}\rightarrow C,\tilde{\mathcal{F}},\tilde{\sigma})$ defining $P$ where $\tilde{\pi}$ is bimeromorphically equivalent to $\pi$ via elementary gauge transformations sending $\mathcal{F}$ to $\tilde{\mathcal{F}}$ and $\sigma$ to $\tilde{\sigma}$ (the point here is that $\tilde{\sigma}$ is \emph{everywhere} transverse to $\tilde{\mathcal{F}}$).
\end{prop}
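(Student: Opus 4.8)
The plan is to work locally around each pole and use the normal form of \autoref{foliation-vs-schwarzianeq-mero} to reduce the problem to a concrete computation with elementary transformations. Away from the polar locus $\Sigma$, the triple $(\pi,\mathcal{F},\sigma)$ is already an oper (the section is transverse to the foliation), so the modification will only be needed in a neighbourhood of each $p\in\Sigma$, and these neighbourhoods can be taken pairwise disjoint. So fix a pole $p$, a coordinate $x$ centred at $p$ on a disc $U$ over which $\pi$ is trivial, and, using \autoref{foliation-vs-schwarzianeq-mero}, choose a bimeromorphic trivialization in which $\mathcal{F}_{|U}: dy+(y^2+q(x)/2)\,dx=0$ with $q\in\mathcal{M}(U)$ and $\sigma_{|U}=\{y=\infty\}$. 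As observed right after the statement, writing $Y=1/y$ we get $dY+(-1-\tfrac{q}{2}Y^2)\,dx=0$, so the invariant fibre over $x=0$ carries a unique singular point at $Y=0$, i.e. exactly where $\sigma$ passes; the section fails to be transverse precisely because, and to the extent that, $q$ has a pole at $0$.

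The key step is to kill this by a sequence of elementary transformations centred at that singular point on the invariant fibre, chosen so as to push $\sigma$ off $\operatorname{sing}(\mathcal{F})$. Concretely, I would compute the effect of $\operatorname{elem}_{(0,\{Y=0\})}$ on the pair (foliation, section): in the $Y$-coordinate this is $Y\mapsto Y/x$ (up to the model computation $\operatorname{elem}_{(0,[y_1:0])}(x,[y_1:y_2])=(x,[xy_1:y_2])$ recalled in the excerpt), which transforms the Riccati equation by the gauge rule \eqref{action-gauge-multiplication} and lowers the pole order of the relevant coefficient. One checks that after $\lceil n/2\rceil$ such elementary transformations, where $n=n_p$ is the pole order of $P$ at $p$, the transformed section $\tilde\sigma$ no longer meets the transformed singular locus — the singular point on the invariant fibre has "moved away" from $\tilde\sigma$. (This is the same count that appears in the self-intersection bookkeeping $\tilde\sigma\cdot\tilde\sigma=\sigma\cdot\sigma\mp1$ recalled above, and one has to be a little careful about parity: when $n$ is odd the last elementary transformation is centred at a point of the invariant fibre that the current section does pass through, while for even $n$ the two singular points on the invariant fibre are distinct and one blows up the one lying on $\sigma$.) Performing these elementary transformations over the disc $U$, then gluing back with the unmodified bundle over $C\setminus\{p\}$, and repeating independently at each pole, produces $(\tilde\pi:\tilde Q\to C,\tilde{\mathcal F},\tilde\sigma)$; it is bimeromorphically equivalent to $(\pi,\mathcal F,\sigma)$ by construction, it still defines $P$ outside $\Sigma$ since elementary transformations are isomorphisms there, and now $\tilde\sigma$ is everywhere transverse to $\tilde{\mathcal F}$, so the triple is a genuine $\operatorname{PGL}(2,\mathbb C)$-oper.

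The main obstacle I anticipate is verifying that, after the required number of elementary transformations, the strict transform of the section really is transverse to the strict transform of the foliation — both that it avoids $\operatorname{sing}(\tilde{\mathcal F})$ and that it is not tangent to a leaf at a regular point of the foliation lying over $p$. This forces a careful local analysis of how the pair (singular Riccati germ, section through the singular point) transforms under a blow-up followed by the contraction of the old fibre, keeping track of which of the (one or two) singularities on the invariant fibre lies on $\sigma$ at each stage; the parity of the pole order enters here, and one must also confirm that no \emph{new} tangency between $\tilde\sigma$ and $\tilde{\mathcal F}$ is created over $p$ (this is where the hypothesis that $P$ has no apparent singularity is not yet needed — that hypothesis will be used in the next subsection for the \emph{uniqueness/minimality} of the model, not for mere existence). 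A clean way to organize this is to track, at the pole, the local invariant "(pole order of the Riccati coefficient $\gamma$) versus (order of contact of $\sigma$ with $\operatorname{sing}(\mathcal F)$)" and show each elementary transformation decreases it until transversality is reached.
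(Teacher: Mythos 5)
Your proposal follows the paper's proof essentially verbatim: localize at each pole using the normal form of \autoref{foliation-vs-schwarzianeq-mero}, pass to $Y=1/y$, and apply $l=\lceil n/2\rceil$ elementary transformations $Y=x^{l}\tilde{Y}$ centred at the unique singular point on the invariant fibre through which $\sigma$ passes. The verification you flag as the main remaining obstacle is in fact a one-line substitution: the transformed Pfaffian equation is $x^{l}d\tilde{Y}+\left[-x^{2l-n}\tfrac{\tilde{q}(x)}{2}\tilde{Y}^{2}+lx^{l-1}\tilde{Y}-1\right]dx=0$, which at $(x,\tilde{Y})=(0,0)$ reduces to $-dx=0$, a non-vanishing form whose kernel is the fibre direction, so the foliation is simultaneously non-singular there and transverse to $\tilde{\sigma}=\{\tilde{Y}=0\}$.
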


\begin{proof}
	Suppose $P$ admits at least one pole, otherwise there is nothing to prove. Start by applying \autoref{foliation-vs-schwarzianeq-mero} for a local coordinate $x$ centered at a pole $p$ of $P$ of order $n\in\mathbb{N}^*$. Sticking to the same notations, we get \[\mathcal{F}_{|U}:dy+\left(y^2+\frac{\tilde{q}(x)}{2x^n}\right)dx=0\textmd{ with $\tilde{q}:=x^nq\in\mathcal{O}(U)$ and $\tilde{q}(0)\neq0$.}\]
	As we have already noticed, the only point where $\sigma$ goes through a singularity of the foliation $\mathcal{F}_{|U}$ is $(x,y)=(0,\infty)$. The idea is to apply an appropriate number of elementary transformations centered at this point.
	
	Apply first the gauge transformation $Y=1/y$, then multiply by $-Y^2$ before performing $l\in\mathbb{N}$ elementary gauge transformations $Y=x^l\tilde{Y}$. After a division by $x^n$, we get the Pfaffian equation \[\tilde{\mathcal{F}}_{|U}:x^ld\tilde{Y}+\left[-x^{2l-n}\frac{\tilde{q}(x)}{2}\tilde{Y}^2+lx^{l-1}\tilde{Y}-1\right]dx=0,\] so that fixing $l=(n+1)/2$ or $n/2$ depending if $n$ is odd or even, respectively, we get $-dx=0$ at $(x,\tilde{Y})=(0,0)$. This $1$-form is both non-zero and transverse to the pull-back $\tilde{\sigma}:=\{\tilde{Y}=0\}$ of the section $\sigma$.
	
	Applying this kind of transformations for every poles of $P$ leads to the desired oper $(\tilde{\pi}:\tilde{Q}\rightarrow C,\tilde{\mathcal{F}},\tilde{\sigma})$.	The projective structure associated to this new triple is the same as that of the initial one, because elementary transformations are biholomorphic outside of the fibers containing the points at which they are centered.
\end{proof}

\begin{figure}[H]
	\centering
	\def\svgwidth{1\textwidth}
	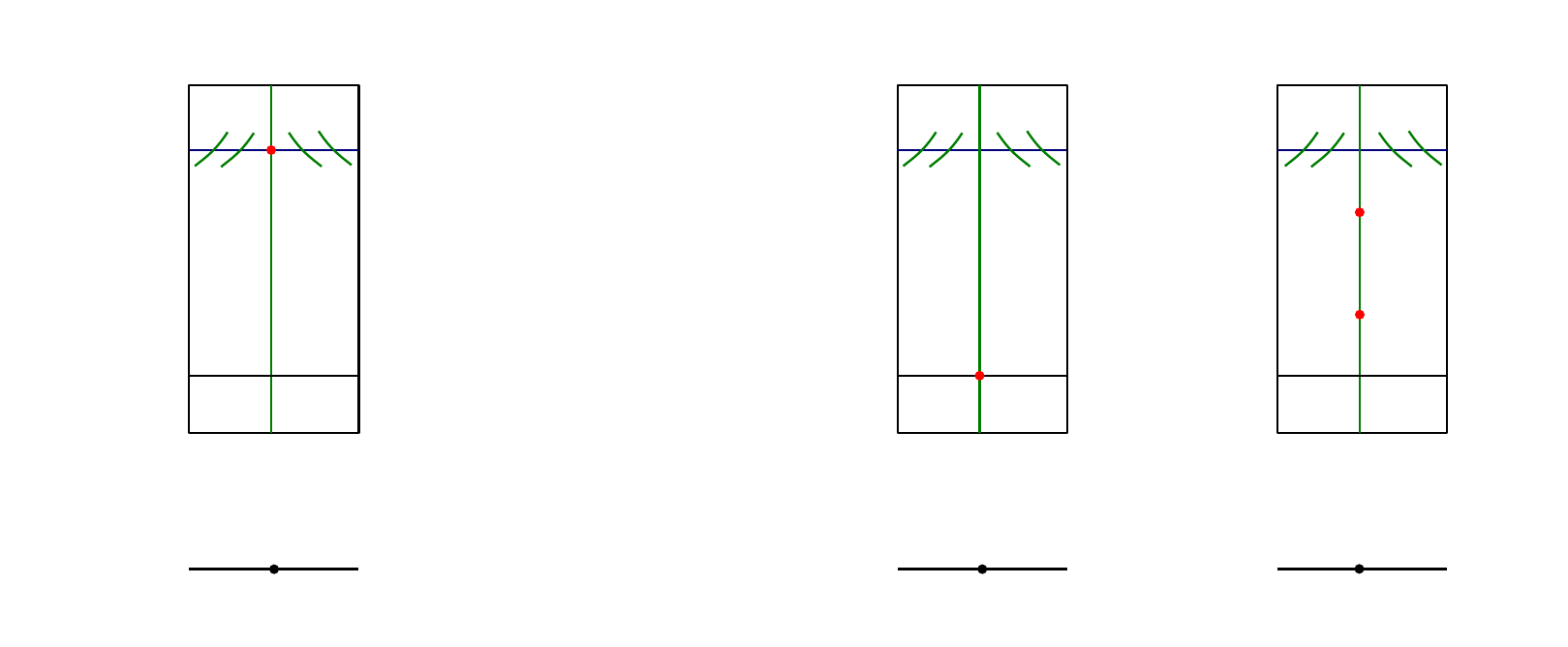
	\caption{\autoref{prop-transversality-of-sigma}.}
\end{figure}

An oper defining a meromorphic projective structure (with at least one pole) is not unique modulo bimeromorphic gauge transformation. If we add a condition on the order of tangency between the foliation $\mathcal{F}$ and the fibers of $\pi$, though, we get a uniqueness statement.

Just as the transversality between the section and the foliation, the order of tangency between the foliation and the fibers can be changed by a bimeromorphic gauge transformation.

\begin{rmk}\label{rk-elem-and-pole-order}
	Consider a singular Riccati foliation $(\pi:Q\rightarrow C,\mathcal{F})$, and assume it has an invariant fiber. Chose a coordinate system $(x,y)$ on a local trivialization of the bundle such that the set $\{x=0\}$ corresponds to an invariant fiber. By $3$-transitivity of the action of Möbius transformations on the projective line, we can assume without loss of generality that $(0,0)$ is a singular point of the foliation and that $(0,\infty)$ is not. Then, there exists an integer $m\in\mathbb{N}^*$ and holomorphic functions $a,b$ and $c$ such that the Pfaffian equation defining locally the foliation reads \[x^mdy+(a(x)y^2+b(x)y+c(x))dx=0,\] with $a(0)\neq0$ and $c(0)=0$. By a holomorphic gauge transformation, we can normalize $a$ to $1$ while preserving $\{y=0\}$ and $\{y=\infty\}$ (cf. equation \eqref{action-gauge-multiplication}). This changes $b$ and $c$ to, say, $\tilde{b}$ and $\tilde{c}$, respectively. After the meromorphic transformation $(x,y):=(x,x^k\tilde{y})$, $k\in\mathbb{Z}$, we get \[x^{m+k}d\tilde{y}+[x^{2k}\tilde{y}^2+(kx^{m+k-1}+\tilde{b}(x)x^k)y+\tilde{c}(x)]dx=0,\] with $\tilde{b}, \tilde{c}$ holomorphic ($\tilde{c}(0)=0$). If $k>0$, this transformation (which also preserves $\{y=0\}$ and $\{y=\infty\}$ away from $\{x=0\}$) corresponds to an elementary transformations centered at $(0,\infty)$, and we see from the latter equation that the order of vanishing of the coefficient before $d\tilde{y}$ at $0$ has either decreased or remained equal to $m$ after $k=1$ elementary transformation (depending on the order of vanishing of $\tilde{b}$ and $\tilde{c}$, and after division by the appropriate power of $x$). If $k<0$, we are applying elementary transformations centered at $(0,0)$. If $k=-1$ we see (after a multiplication by $x^{2}$) that the pole order of the Riccati equation has increased from $m$ to $m+1$.

	This proves that \emph{an elementary transformation centered \emph{at a singular point} of the foliation decreases or leaves unchanged the order of the pole of the Riccati equation, whereas an elementary transformation centered \emph{outside} of the singular locus of the foliation on an invariant fiber always increase the order of the pole of the Riccati equation.}
	
	A birational model of $(\pi:Q\rightarrow C,\mathcal{F})$ minimizing the polar divisor of the foliation is not unique (see \cite{MR3328860}).
\end{rmk}

\begin{prop}\label{pole1form-vs-poleschwarzian}
	Let $(\pi:Q\rightarrow C,\mathcal{F},\sigma)$ be a triple defining a meromorphic projective structure on $C$.%
	
	\begin{itemize}
		\item[(i)] Suppose $\mathcal{F}$ reads locally around a singularity centered at $x=0$ as in  \autoref{foliation-vs-schwarzianeq-mero}, with $q$ having a pole of order $n\in\mathbb{N}^*$. Then, applying elementary gauge transformations centered at a point in $\{x=0\}$ we can get a $1$-form defining the foliation having poles of order $\lceil \frac{n}{2} \rceil$ (where $\lceil \cdot \rceil$ denotes the ceiling function). This is the minimum order that can be achieved by applying meromorphic gauge transformations.
		
		\item[(ii)] Suppose $\mathcal{F}$ is defined locally in a coordinate $x$ by a $1$-form admitting a pole of order $m$ at $x=0$, then $q$ (\autoref{foliation-vs-schwarzianeq-mero}) has a pole of order at most $2m$.
	\end{itemize}
\end{prop}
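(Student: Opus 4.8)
Here is how I would proceed. I would prove item (ii) first, since the sharpness half of item (i) follows from it. Throughout I work in a local trivialisation $U\times\mathbb{P}^1$ of the bundle around the invariant fibre $\{x=0\}$ (legitimate, as every $\mathbb{P}^1$-bundle is trivial over a disc), and I use the following standing observation: the order of the pole at $x=0$ of the function $q$ produced by \autoref{foliation-vs-schwarzianeq-mero} depends only on the meromorphic projective structure defined by the triple, and equals the order of that structure at the corresponding pole. Indeed $q=\mathcal{S}_x(\varphi)$ for a projective chart $\varphi$, while a holomorphic reference projective structure $P_0$ has holomorphic Schwarzian, so $q$ and the coefficient of a polar differential $P-P_0$ differ by a holomorphic function; and this $q$ is unchanged under the bimeromorphic gauge transformations through which triples are identified.

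For (ii), I would start from a trivialisation in which $\mathcal{F}$ reads $dy+(\alpha y^2+\beta y+\gamma)\,dx=0$ with $\alpha,\beta,\gamma$ meromorphic of pole order $\le m$ at $x=0$ (this is what a defining $1$-form with a pole of order $m$ means after clearing denominators, and $\alpha\not\equiv0$ by generic transversality of $\sigma$). First move the holomorphic section $\sigma$ to $\{y=\infty\}$: writing $\sigma$ locally as a holomorphic graph in the $y$-chart or in the $1/y$-chart, each of the one or two elementary gauge transformations achieving this replaces the coefficient triple by polynomial expressions in the old coefficients and the holomorphic section, introducing no new denominator, so the bound "pole order $\le m$" is preserved. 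Then I run the two normalising gauge transformations from the proof of \autoref{foliation-vs-schwarzianeq}(i): the substitution $y=\alpha^{-1}\tilde y$, which normalises the quadratic coefficient to $1$ and produces new linear and constant coefficients $\beta_1=\beta-\alpha'/\alpha$ and $\gamma_1=\alpha\gamma$; then the translation $\tilde y=y_3-\beta_1/2$, after which the equation reads $dy_3+(y_3^2+q/2)\,dx=0$ with
\[
q=2\gamma_1-\tfrac12\beta_1^2-\beta_1'.
\]
Now I estimate pole orders at $x=0$: since $\alpha$ is a nonzero meromorphic function, $\alpha'/\alpha$ is a logarithmic derivative and has a pole of order $\le1\le m$, so $\beta_1$ has pole order $\le m$; hence $\gamma_1=\alpha\gamma$ has pole order $\le 2m$, $\beta_1^2$ has pole order $\le 2m$, and $\beta_1'$ has pole order $\le m+1\le 2m$. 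Therefore $q$ has pole order $\le 2m$, which is (ii).

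For (i), the explicit reduction is essentially the computation already carried out in the proof of \autoref{prop-transversality-of-sigma}: starting from $dy+(y^2+q/2)\,dx=0$ with $q=\tilde q(x)/x^{n}$, $\tilde q(0)\ne0$, the only singular point of $\mathcal{F}$ on $\{x=0\}$ is $(0,\infty)$, and applying $Y=1/y$ followed by $l$ elementary transformations $Y=x^{l}\tilde Y$ centred there (then dividing by the suitable power of $x$) yields a Pfaffian equation with holomorphic coefficients exactly when $l\ge n/2$, i.e. for $l=\lceil n/2\rceil$; for that value the coefficient of $dx$ does not vanish identically on $\{x=0\}$, so the $1$-form has pole order precisely $\lceil n/2\rceil$. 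For minimality, let any meromorphically gauge-equivalent model have a defining $1$-form of pole order $m$ at the fibre over this pole; by (ii) the associated $q$, which by the standing observation has pole order $n$, satisfies $n\le 2m$, hence $m\ge\lceil n/2\rceil$. So $\lceil n/2\rceil$ is the minimum achievable.

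The main obstacle is the bookkeeping in (ii): verifying that moving $\sigma$ to infinity and then normalising never introduce denominators worse than the claimed factor $2$. The delicate cases are when the quadratic coefficient $\alpha$ vanishes or blows up at $x=0$ (so $\alpha^{-1}$ is itself singular) and when $\sigma$ passes through the point $\infty$ over $x=0$; all of these reduce to finite explicit substitutions, and the only genuinely essential input is the order-$\le1$ bound on the logarithmic derivative $\alpha'/\alpha$, which is precisely where the hypothesis $m\ge1$ (we are at an actual pole) enters.
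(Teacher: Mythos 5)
Your proposal is correct, and the two computations at its core are exactly the ones the paper uses: for the reduction in (i), the chain $Y=1/y$ followed by $l$ elementary transformations centred at the unique singular point $(0,\infty)$ (the computation of \autoref{prop-transversality-of-sigma}); for (ii), the two normalising affine gauge transformations from the proof of \autoref{foliation-vs-schwarzianeq}(i) rerun with meromorphic coefficients, the key estimate being that the logarithmic derivative $\alpha'/\alpha$ contributes only a simple pole. The one place where you genuinely reorganise the logic is the minimality claim in (i): the paper disposes of it with a terse ``whence the result,'' leaning on the explicit family of transformations and on \autoref{rk-elem-and-pole-order}, whereas you prove (ii) first and combine it with the observation that the coefficient $q$ of \autoref{foliation-vs-schwarzianeq-mero} is an invariant of the projective structure (so its pole order is $n$ in \emph{every} birational model), which gives $n\le 2m$ and hence $m\ge\lceil n/2\rceil$ for any meromorphically gauge-equivalent model. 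This is a cleaner and more complete justification of the lower bound than the paper's, since it rules out all meromorphic gauge transformations at once rather than only the specific compositions considered; the price is only that (ii) must come first, which your writeup handles correctly.
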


\begin{proof}
	\emph{(i)} Recall that any birational gauge transformation is the composition of elementary transformations. Consider the normal form \eqref{normal-form}. The only singularity of the foliation $\mathcal{F}$ being $(x,y)=(0,\infty)$, the idea (cf. \autoref{rk-elem-and-pole-order}) is to apply an elementary transformation centered at this point, and repeat the process as long as it is possible, say $k\in\mathbb{N}$ times. Pulling-back the $1$-form via $(x,\tilde{y})\mapsto(x,y):=(x,x^{-k}\tilde{y})$ leads to \[2x^{n-k}d\tilde{y}+\left[2x^{n-2k}\tilde{y}^2-kx^{n-1-k}\tilde{y}+\tilde{q}(x)\right]dx=0,\] with $\tilde{q}(x):=x^nq(x)\in\mathcal{O}(U)$ and $\tilde{q}(0)\neq0$. Whence the result.
	
	\emph{(ii)} This can be seen by carrying out the computations in the proof of \autoref{foliation-vs-schwarzianeq}(i) but with $a$ and $b$ meromorphic and possibly admitting a pole at $x=0$.
\end{proof}

In fact, as observed by Loray and Pereira in \cite[Sec. 3.2]{MR2337401}:

\begin{prop}\label{unique-birational-model}
	Let $(\pi:Q\rightarrow C,\mathcal{F},\sigma)$ be a singular Riccati foliation $\mathcal{F}$, together with a section $\sigma$ generically transverse to $\mathcal{F}$. There is a unique birational model minimizing the polar divisor such that the section does not intersect the singular set of the foliation.%
\end{prop}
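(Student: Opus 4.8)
The statement is local over $C$, and the plan is to pin down the germ of the model at each invariant fiber. Let $P$ be the meromorphic projective structure defined by the triple (\autoref{foliation-vs-schwarzianeq-mero}), and let its poles be $p_1,\dots,p_d$ with Schwarzian orders $n_1,\dots,n_d$. By \autoref{pole1form-vs-poleschwarzian}(i) the minimal local order of a $1$-form defining $\mathcal{F}$ near $p_i$ equals $\lceil n_i/2\rceil$ and is attained; since the local orders at distinct poles may be adjusted independently, the polar divisor of any birational model dominates $D_{\min}:=\sum_i\lceil n_i/2\rceil\,p_i$ pointwise, and a \emph{minimal} model is precisely one whose polar divisor equals $D_{\min}$. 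In particular the invariant fibers of a minimal model lie exactly over $p_1,\dots,p_d$, so over $C^*:=C\smallsetminus\{p_1,\dots,p_d\}$ it restricts to a non-singular Riccati foliation with section; since a non-singular Riccati foliation is the suspension of its monodromy (unchanged under birational equivalence), a short argument with elementary transformations shows all minimal models with $\sigma$ disjoint from $\operatorname{sing}(\mathcal{F})$ coincide over $C^*$. It thus suffices to describe the germ of such a model at each $p_i$.

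Two observations shape the local problem. A section is automatically transverse to the fibers of $\pi$, and at a non-singular point of an invariant fiber that fiber is itself a leaf of $\mathcal{F}$; hence, over $p:=p_i$, the condition $\sigma(p)\notin\operatorname{sing}(\mathcal{F})$ is equivalent to $\sigma$ being transverse to $\mathcal{F}$ at $\sigma(p)$. \emph{Existence} of a minimal model with this property is then immediate from \autoref{prop-transversality-of-sigma}: its construction turns any triple into one with $\sigma$ everywhere transverse, and, since it performs $\lceil n_i/2\rceil$ elementary transformations above a pole of order $n_i$, the resulting defining $1$-form has local order exactly $\lceil n_i/2\rceil$, so the model is minimal by \autoref{pole1form-vs-poleschwarzian}(i).

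For \emph{uniqueness}, fix $p=p_i$, a local coordinate $x$ centered there, and set $m:=\lceil n_i/2\rceil$. In any minimal model with $\sigma(p)\notin\operatorname{sing}(\mathcal{F})$, a local holomorphic gauge transformation moves $\sigma$ to $\{y=\infty\}$; writing $\mathcal{F}$ as $x^{m'}dy+(a(x)y^2+b(x)y+c(x))dx=0$ with $a,b,c$ holomorphic, transversality of $\sigma$ at $\sigma(p)$ forces $a(0)\neq0$, so after shrinking the disc and performing the holomorphic gauge transformations \eqref{action-gauge-multiplication} and \eqref{action-gauge-translation} (normalising $a$ to $1$, then killing the coefficient of $y$) the equation takes the shape $x^{m'}dy+(y^2+c(x))dx=0$ with $c$ holomorphic and $\sigma=\{y=\infty\}$; minimality forces $m'=m$. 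The only holomorphic gauge transformation preserving both $\{y=\infty\}$ and this shape is the identity: preservation of $\{y=\infty\}$ restricts to affine gauges $y\mapsto\alpha y+\beta$, the quadratic coefficient forces $\alpha\equiv1$, and then the linear coefficient forces $\beta\equiv0$. Hence $c$ is uniquely determined by $P$ (a short computation expresses it through the Schwarzian $q$), and any two minimal models with $\sigma$ transverse at the poles are isomorphic over a neighbourhood of each $p_i$. These local isomorphisms and the one over $C^*$ differ on the punctured discs only by automorphisms of the restricted triple, which in the normal form are again affine gauges preserving the shape, hence the identity; so they glue to a global isomorphism, unique for the same reason.

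The main obstacle is exactly the local rigidity at the poles: one must see that transversality of the section together with minimality of the pole order reduces the residual gauge group to the trivial one, which is what produces uniqueness. A secondary point needing care is the bookkeeping behind the localisation — that $D_{\min}$ is achieved fibrewise, that minimal models carry no spurious invariant fibers, and that they agree over $C^*$ — all of which rely on the dichotomy of \autoref{rk-elem-and-pole-order} between elementary transformations centered at singular and at non-singular points of invariant fibers.
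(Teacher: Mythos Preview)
The paper does not give its own proof of this proposition; it only attributes the observation to Loray and Pereira \cite[Sec.~3.2]{MR2337401}. So there is nothing to compare against, and your argument stands on its own merits.

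Your approach is sound and the local rigidity computation at each pole (reducing to $x^m\,dy+(y^2+c(x))\,dx=0$ with $\sigma=\{y=\infty\}$, then checking that the only affine gauge preserving this shape is the identity) is exactly the right mechanism. One point deserves a bit more care: the assertion that the two minimal models ``coincide over $C^*$'' is where the section enters, and the suspension argument alone only matches the foliated bundles, not the sections. What you actually want is that the given birational map $\phi:M_1\dashrightarrow M_2$ is already biholomorphic over $C^*$. This follows because over any $p\in C^*$ both foliations are regular; choosing three local leaves of $\mathcal{F}_1$ (holomorphic across $p$), their images under $\phi$ are leaves of $\mathcal{F}_2$, hence also holomorphic across $p$, and a Möbius transformation is determined by three points --- so $\phi$ extends holomorphically and invertibly over $p$. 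With this in hand, your gluing goes through exactly as written. A second minor imprecision: \autoref{prop-transversality-of-sigma} as stated produces a section \emph{everywhere transverse}, but if the original $\sigma$ is tangent at some points of $C^*$ those tangencies persist (the construction only touches the polar fibers); this does not affect your argument, since what you need is only that $\sigma$ avoids $\operatorname{sing}(\mathcal{F})$, which the construction does deliver.
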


If we further assume that the corresponding projective structure has no apparent singularity, then by \autoref{apparent-sing-and-tangency} (bellow) the section of the above minimal model cannot be tangent to the foliation, hence it defines an oper. In conclusion, we get a correspondence similar to \autoref{indigenousbundle} for meromorphic projective structures without apparent singularity on the smooth curve $C$.

\begin{prop}\label{indigenousbundle-meromorphic}
	The above correspondence defines a bijection between the set of meromorphic projective structures without apparent singularity on $C$ and the set of meromorphic $\operatorname{PGL}(2,\mathbb{C})$-oper $(\pi:Q\rightarrow C,\mathcal{F},\sigma)$ such that the polar divisor of $\mathcal{F}$ is minimal.
\end{prop}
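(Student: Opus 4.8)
The plan is to build the bijection directly from the results already established, principally \autoref{foliation-vs-schwarzianeq-mero}, \autoref{unique-birational-model} and \autoref{apparent-sing-and-tangency} (with \autoref{prop-transversality-of-sigma} as a first, non-minimal, intermediate step if one prefers). First I would construct the map from left to right. Given a meromorphic projective structure $P$ on $C$ without apparent singularity, described by a pair $(P_0,\phi)$ as in \autoref{meromorphicprojectivestructure}, the gluing construction recalled just before \autoref{foliation-vs-schwarzianeq-mero} produces a triple $(\pi_0:Q_0\to C,\mathcal{F}_0,\sigma_0)$ on the trivial $\mathbb{P}^1$-bundle which defines $P$ and whose section $\sigma_0$ is only generically transverse to $\mathcal{F}_0$. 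Feeding this triple to \autoref{unique-birational-model} yields the unique birational model $(\pi:Q\to C,\mathcal{F},\sigma)$ minimizing the polar divisor of the foliation and such that $\sigma$ does not meet $\operatorname{sing}(\mathcal{F})$. Because $P$ has no apparent singularity, \autoref{apparent-sing-and-tangency} forces $\sigma$ to be transverse to $\mathcal{F}$ at \emph{every} point, not merely away from $\operatorname{sing}(\mathcal{F})$; hence $(\pi,\mathcal{F},\sigma)$ is a meromorphic $\operatorname{PGL}(2,\mathbb{C})$-oper, and its polar divisor is minimal by construction. I would then check that the assignment $P\mapsto(\pi,\mathcal{F},\sigma)$ is well defined: any two triples defining the same $P$ differ by a bimeromorphic gauge transformation whose indeterminacy locus lies over the poles of $P$, so they are birational models of one another as triples, and the uniqueness clause of \autoref{unique-birational-model} selects the same minimal model.

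For the converse direction, I would start from a meromorphic $\operatorname{PGL}(2,\mathbb{C})$-oper $(\pi:Q\to C,\mathcal{F},\sigma)$ with minimal polar divisor. Its section is transverse to $\mathcal{F}$ everywhere, so in particular it avoids $\operatorname{sing}(\mathcal{F})$, and by \autoref{foliation-vs-schwarzianeq-mero} the triple defines a meromorphic projective structure $P$ on $C$ (one recovers a projective atlas off the polar locus exactly as in \autoref{indigenousbundle}). Since the triple has minimal polar divisor \emph{and} its section avoids $\operatorname{sing}(\mathcal{F})$, it is precisely the minimal model attached by \autoref{unique-birational-model} to the singular Riccati foliation underlying $P$; were $P$ to have an apparent singularity, \autoref{apparent-sing-and-tangency} would make $\sigma$ tangent to $\mathcal{F}$ there, contradicting transversality. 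Hence $P$ has no apparent singularity, which produces the map in the other direction.

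Finally I would verify that the two maps are mutually inverse. One composition is immediate: the oper produced from $P$ defines $P$ in the sense of \autoref{foliation-vs-schwarzianeq-mero}, so applying the backward map returns $P$. For the other composition, the structure $P$ produced from an oper $\mathcal{O}$ is defined by $\mathcal{O}$; thus $\mathcal{O}$ is itself a triple defining $P$ with section avoiding $\operatorname{sing}(\mathcal{F})$ and with minimal polar divisor, so by the uniqueness in \autoref{unique-birational-model} it coincides with the oper the forward map attaches to $P$.

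The main obstacle I anticipate is not in the formal bookkeeping but in pinning down \autoref{unique-birational-model} in the exact form used here: one needs that the minimum of the polar divisor is taken over \emph{all} birational models of the Riccati pair (not only those with everywhere transverse, or singular-set-avoiding, section), that this minimum is attained at a model where the section automatically stays off $\operatorname{sing}(\mathcal{F})$, and that such a model is unique up to isomorphism. Closely tied to this is checking that the equivalence relation on the ``singular triples'' of \S\,1 --- bimeromorphic gauge equivalence with indeterminacy confined to the fibers over the poles --- is exactly the relation identifying birational models of a fixed Riccati triple, which is what makes the forward map well defined. Once these points are settled, \autoref{apparent-sing-and-tangency} supplies the apparent-singularity / tangency dictionary and the remaining verifications are routine.
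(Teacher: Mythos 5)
Your proposal is correct and follows essentially the same route as the paper, which states \autoref{indigenousbundle-meromorphic} as an immediate consequence of \autoref{unique-birational-model} (the unique minimal birational model with section off $\operatorname{sing}(\mathcal{F})$) combined with the apparent-singularity/tangency dictionary of \autoref{apparent-sing-and-tangency}; your write-up simply makes the well-definedness and mutual-inverse checks explicit, which the paper leaves implicit.
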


\paragraph{Apparent singularities, branch points and tangency between the section and the foliation.} \autoref{pole1form-vs-poleschwarzian} allows us to characterize apparent singularities and give there relation to branch points (see also \cite[Prop. IX.1.2]{MR3494804}).

\begin{prop}\label{apparent-sing-and-tangency}
	Let $(\pi:Q\rightarrow C,\mathcal{F},\sigma)$ be a $\operatorname{PGL}(2,\mathbb{C})$-oper. Suppose the associated projective structure admits an apparent singularity at a point $p\in C$. Then, there exist elementary gauge transformations desingularizing the foliation over $p$, while possibly introducing a tangency point between $\sigma$ and $\mathcal{F}$. The converse is true.
\end{prop}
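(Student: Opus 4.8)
The plan is to work entirely in a local trivialization around the singular point $p$, using the normal form of \autoref{foliation-vs-schwarzianeq-mero}, and to track simultaneously the pole order of the Riccati $1$-form, the self-intersection of the strict transform of $\sigma$, and the tangency order between $\sigma$ and $\mathcal{F}$. First I would recall from \autoref{apparent-sing-and-tangency}'s hypotheses and \autoref{meromorphicprojectivestructure} that an apparent singularity at $p$ means the projective charts near $p$ read $\varphi(x)=x^k$ for some integer $k\geq2$ in a suitable coordinate (here $k=1$ would be a regular point, so $k\geq2$), and hence in the normal form $\mathcal{S}_x(\varphi)=q$ we get $q(x)=\frac{1-k^2}{2x^2}+\cdots$, i.e. a pole of order exactly $2$ with residue $\operatorname{res}_2(p)=\frac{1-k^2}{2}$. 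Thus $\mathcal{F}_{|U}: dy+\bigl(y^2+\tfrac{q(x)}{2}\bigr)dx=0$ with $q$ having a pole of order $2$, and the unique singular point of $\mathcal{F}$ on the invariant fiber $\{x=0\}$ is $(0,\infty)=\sigma\cap\{x=0\}$.

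Next I would make the pole order $1$ by applying, as in the proof of \autoref{pole1form-vs-poleschwarzian}(i), a single elementary transformation centered at $(0,\infty)$, i.e. the birational gauge $y=x^{-1}\tilde y$; since $n=2$ we reach $\lceil n/2\rceil=1$. After this step the invariant fiber still carries a singularity (now with a pole of order $1$, i.e. a Fuchsian singularity), but — crucially — by \autoref{rk-elem-and-pole-order} an elementary transformation centered at the singular point of the foliation decreases or preserves the pole order, and here a further transformation centered at the singular point lying over $p$ brings the $1$-form to a holomorphic (Poincaré rank $0$) expression, because a logarithmic singularity with \emph{trivial} local monodromy (an apparent singularity) of a rank-two system is, after a meromorphic gauge transformation, equivalent to a holomorphic one — this is the classical fact that apparent regular singularities are removable (equivalently: the local monodromy being trivial, the suspension has no singularity there). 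Concretely one computes the exponents at the Fuchsian point and uses that their difference is the integer $k\geq2$ with trivial monodromy, so an appropriate elementary transformation (a shift by the integer gap) desingularizes. I would carry out this last normalization by the same kind of bookkeeping as in \autoref{rk-elem-and-pole-order}: write the Pfaffian equation $x\,dy+(a(x)y^2+b(x)y+c(x))dx=0$ with $a(0)\neq0$, normalize $a\equiv1$, read off the exponents as the roots of the indicial equation, and apply $y\mapsto x^{-\ell}y$ with $\ell$ the positive integer exponent gap.

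Finally I would check the effect on the section. Under the successive elementary transformations the strict transform $\tilde\sigma$ of $\sigma$ changes self-intersection by $\pm1$ according to whether the center lies off or on $\sigma$ (the displayed formula in the excerpt), and the key point is that the transversality/tangency of $\tilde\sigma$ with $\tilde{\mathcal F}$ can degenerate: once the foliation is regular over $p$, $\tilde\sigma$ meets it either transversally (we have an oper) or tangentially, and by the discrepancy between the degrees of $q$'s pole ($=2$) and the minimal $1$-form pole ($=1$) a tangency of controlled order is exactly what is produced. So the statement "possibly introducing a tangency point" is unavoidable and is accounted for by this index count. For the converse, I would run the construction backwards: starting from a regular foliation $\mathcal F$ and a section $\sigma$ tangent to $\mathcal F$ at a point over $p$, perform the elementary transformation centered at the point of $\sigma$ on that fiber where the tangency occurs; by \autoref{rk-elem-and-pole-order} this introduces a pole of order $\geq1$ in the Riccati $1$-form, hence (via \autoref{pole1form-vs-poleschwarzian}) a pole of $q$; and because $\mathcal F$ was regular — so the local monodromy of $\mathcal F$, and thus of the projective structure, is trivial around $p$ — the resulting singularity of the projective structure is apparent. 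The main obstacle I expect is the bookkeeping in the middle step: cleanly identifying the exponents of the Fuchsian point produced after the first elementary transformation and verifying that the trivial-monodromy condition forces their difference to be the integer $k$, so that exactly one further elementary transformation (centered at the singular point) desingularizes; this is where one must be careful to distinguish apparent from merely logarithmic singularities, and to keep the normalizations $\{y=0\},\{y=\infty\}$ consistent throughout.
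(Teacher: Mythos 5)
Your proposal is correct and follows essentially the same route as the paper: both directions are handled by local normal forms, pole-order bookkeeping under elementary transformations, and the observation that trivial local monodromy at a Fuchsian point (integer exponent gap, no logarithm) allows a meromorphic gauge transformation to remove the singularity, while possibly degrading the transversality of the strict transform of $\sigma$. The only differences are cosmetic — you compute the indicial equation by hand where the paper cites Brunella's classification of simple-pole Riccati singularities, and in the converse you should allow $k$ elementary transformations (one per order of tangency) rather than a single one, exactly as in the paper's explicit computation.
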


\begin{proof}
	Suppose first that the section $\sigma$ is tangent to $\mathcal{F}$ at some point in the fiber over a point $p\in C$ along which the foliation is regular. Then, there are holomorphic functions $a,b$ and an integer $k\in\mathbb{N}^*$ such that, in a bundle chart $U_x\times\mathbb{C}_y$ such that $p=0$ and $\sigma=\{y=0\}$,
	\[\mathcal{F}_{|U}:dy+[a(x)y^2+b(x)y+x^{k-1}]dx=0.\]
	Here, $\operatorname{tang}(\mathcal{F},\sigma)=k-1$. The composition
	\[(x,y)\longmapsto(x,\tilde{y}):=(x,y/x^k)\]
	of $k$ elementary gauge transformations (the first one being centered at $(x,y)=(0,0)$) leads to
	\[d\tilde{y}+\left[a(x)x^k\tilde{y}^2+\left(\frac{k}{x}+b(x)\right)\tilde{y}+\frac{1}{x}\right]dx=0.\]
	We see, after multiplication by $x$, that the section is now transverse to the foliation around $x=0$. Moreover, a calculation similar to the proof of \autoref{foliation-vs-schwarzianeq} shows that the projective structure has a pole of order at most $2$ at $x=0$. But the local monodromy is trivial since we started with a non-singular foliation.
	
	Conversely: Thanks to the \autoref{pole1form-vs-poleschwarzian}, the foliation may be described by a $1$-form with a pole of order at most $1$. This falls into the non-degenerate case of \cite[Prop. 4.2]{MR3328860}. Furthermore, the monodromy being trivial, we can perform (see \cite[page 44 and 45]{MR3328860}) a change of variable (not necessarily a gauge transformation preserving $\sigma$) to get, for some $n\in\mathbb{Z}$
	\[dy-\frac{ny}{x}dx=0.\]
	Then, a composition of elementary transformations (the first one being centered at $(x,y)=(0,0)$) leads to \[dy=0,\]
	which is non-singular. Finally, the corresponding section may be tangent to the foliation. We can't ensure the transversality.
	
	Other types of singularities (in \cite{MR3328860}) cannot be desingularized. Indeed, there monodromy are non-trivial. Hence, apparent singularities are exactly those that can be desingularized.
\end{proof}

Around those points, the projective charts take the form $\varphi(x)=x^{k-1}$ up to a change of coordinate: they are branch points.

\subsection{Construction of a family of meromorphic \texorpdfstring{$\operatorname{PGL}(2,\mathbb{C})$}{PGL(2,C)}-opers}

We use de notations of section~\ref{section-meromorphic-projective-structures}.

\begin{lem}\label{family-of-opers}
	For all point $\tau_0\in\mathcal{P}^\circ(\mathbb{S},\mathbb{M},(n_i))$, there exists an open neighborhood $U$ of $\tau_0$ and a holomorphic family of curves $F:\mathcal{C}\rightarrow U$, together with
	\begin{itemize}
		\item a holomorphic $\mathbb{P}^1$-bundle $\Pi:\mathcal{Q}\rightarrow \mathcal{C}$,
		\item a dimension $1$ singular foliation $\mathcal{F}$ on $\mathcal{Q}$,
		\item and a holomorphic section $\mathfrak{S}$ of $\Pi$
	\end{itemize}
	such that $F\circ\Pi$ is a first integral of $\mathcal{F}$, and in restriction to any fiber $C_\tau:=F^{-1}(\tau)$ of the family~$F$, the triple $(\Pi_{|C_\tau},\mathcal{F}_{|Q_\tau},\mathfrak{S}_{|C_\tau})$ is the unique $\operatorname{PGL}(2,\mathbb{C})$-oper with minimal polar divisor on $C_\tau$ associated to meromorphic projective structures corresponding to $\tau$.
\end{lem}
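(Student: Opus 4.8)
The plan is to construct the family in two stages. First, over a suitable neighbourhood $U$ of $\tau_0$, produce a \emph{relative meromorphic projective structure} (a holomorphic family in the sense of \cite[Sec.~7.3]{Allegretti_2020}); second, turn it fiber by fiber into a singular triple on the trivial $\mathbb{P}^1$-bundle via the construction following \autoref{foliation-vs-schwarzianeq-mero}, and apply \emph{relative} elementary transformations to reach, simultaneously over $U$, the unique minimal oper of \autoref{indigenousbundle-meromorphic}. Everything in sight is fiberwise standard; the real subtlety is holomorphic dependence on $\tau$, and the mechanism making the family construction possible is that the pole orders $(n_i)$ are \emph{constant} on $\mathcal{P}^\circ(\mathbb{S},\mathbb{M},(n_i))$.

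\emph{A relative meromorphic projective structure over $U$.} Let $b_0 \in B = \mathcal{T}(S,d)$ be the image of $\tau_0$ under $\mathcal{P}^\circ(\mathbb{S},\mathbb{M},(n_i)) \hookrightarrow \mathcal{P}(\mathbb{S},\mathbb{M}) \xrightarrow{\,M\,} \mathcal{V} \subset \mathcal{P}(X/B;\mathcal{D}) \to B$. By \autoref{existence-relative-proj-struct} there is a neighbourhood $B_{b_0}$ of $b_0$ over which the universal curve carries a non-singular relative projective structure $s_0$, with relative projective atlas $(U_i,x_i)$; regarded as a relative meromorphic structure with polar divisor $\mathcal{D}$, it is a holomorphic section of $\mathcal{P}(X/B;\mathcal{D}) \to B$ over $B_{b_0}$ and hence trivializes $\mathcal{P}(X/B;\mathcal{D})|_{B_{b_0}} \cong \mathcal{Q}(X/B;\mathcal{D})|_{B_{b_0}}$ by $P \mapsto P - s_0$. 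Since $M$ is a covering map, after shrinking we may assume $M$ restricts to a biholomorphism from a neighbourhood $U$ of $\tau_0$ onto an open subset of $\mathcal{V}$ lying over $B_{b_0}$; pulling the universal curve back along $U \to B_{b_0}$ gives $F : \mathcal{C} \to U$ with fibers $C_\tau$. The tautological assignment $\tau = [(t,P),\phi] \mapsto P$ then defines a relative meromorphic projective structure $\mathbf{P}$ on $F$: in the above trivialization it reads $s_0 + \Phi$, with $\Phi$ the relative meromorphic quadratic differential corresponding to the holomorphic map $U \xrightarrow{\,M|_U\,} \mathcal{Q}(X/B;\mathcal{D})$ — and this holomorphy is precisely what the complex structure on $\mathcal{P}(\mathbb{S},\mathbb{M})$ encodes (cf.\ \cite{Allegretti_2020}, and \cite{MR624819} in the non-singular case). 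Because $U \subset \mathcal{P}^\circ(\mathbb{S},\mathbb{M},(n_i))$, every $\mathbf{P}_\tau$ has poles of order exactly $n_i$ at $p_i(\tau)$ and no apparent singularity.

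\emph{From $\mathbf{P}$ to the minimal oper.} Writing $\Phi = \tfrac{q_i(x_i)}{2}\,dx_i^{\otimes 2}$ in the relative charts of $s_0$ (so $q_i$ is holomorphic on $U_i \times U$ off $\mathcal{D}$, with a pole of order $\le n_i$ along $\mathcal{D}_i$), the fiberwise Riccati foliations $dy + \big(y^2 + \tfrac{q_i(x_i)}{2}\big)dx_i = 0$ assemble into a rank $1$ singular holomorphic foliation $\mathcal{F}_0$ on the trivial bundle $\Pi_0 : \mathcal{Q}_0 = \mathcal{C}\times\mathbb{P}^1 \to \mathcal{C}$ admitting $F \circ \Pi_0$ as first integral, with the section $\mathfrak{S}_0 = \{y=\infty\}$ meeting $\operatorname{sing}(\mathcal{F}_0)$ exactly along the curves $\Gamma_i := \mathfrak{S}_0 \cap \Pi_0^{-1}(p_i(U))$; by \autoref{foliation-vs-schwarzianeq-mero} the triple $(\Pi_0,\mathcal{F}_0,\mathfrak{S}_0)$ defines $\mathbf{P}_\tau$ on each $C_\tau$. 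Each $\Gamma_i$ is a holomorphic section of $\Pi_0$ over the codimension-one submanifold $p_i(U) \subset \mathcal{C}$ and projects isomorphically onto $U$; since $n_i$ is constant, one performs $\lceil n_i/2\rceil$ successive \emph{relative} elementary transformations of $\mathcal{Q}_0$ centred along the successive strict transforms of $\Gamma_i$, and disjointly for each $i$ — the family version of the local computation in the proof of \autoref{prop-transversality-of-sigma} (in the spirit of Heu \cite{MR2667785}). One obtains a holomorphic $\mathbb{P}^1$-bundle $\Pi : \mathcal{Q} \to \mathcal{C}$, the strict transform $\mathcal{F}$ of $\mathcal{F}_0$ (still rank $1$ singular holomorphic, still with first integral $F \circ \Pi$ since the centres are horizontal over $U$), and the strict transform $\mathfrak{S}$ of $\mathfrak{S}_0$ (a holomorphic section). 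Over each $C_\tau$ this is exactly the oper of \autoref{prop-transversality-of-sigma}: $\mathfrak{S}|_{C_\tau}$ is everywhere transverse to $\mathcal{F}|_{C_\tau}$ and avoids $\operatorname{sing}(\mathcal{F})$, its foliation is locally cut out by a $1$-form with a pole of the minimal order $\lceil n_i/2\rceil$ at $p_i(\tau)$ (\autoref{pole1form-vs-poleschwarzian}(i)), and since $\mathbf{P}_\tau$ has no apparent singularity \autoref{unique-birational-model} together with \autoref{indigenousbundle-meromorphic} identifies this triple with \emph{the} minimal $\operatorname{PGL}(2,\mathbb{C})$-oper attached to $\mathbf{P}_\tau$. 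This is the desired family.

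\emph{The main obstacle.} The fiberwise content is entirely \autoref{foliation-vs-schwarzianeq-mero}--\autoref{indigenousbundle-meromorphic}; the work is to make the construction holomorphic in $\tau$. Two points need care: that the tautological relative meromorphic projective structure of the first step is genuinely holomorphic — which rests on the very definition of the complex structure on $\mathcal{P}(\mathbb{S},\mathbb{M})$ — and, above all, that the elementary transformations can be carried out \emph{simultaneously} over all of $U$. This last point is what forces us to stay inside $\mathcal{P}^\circ(\mathbb{S},\mathbb{M},(n_i))$: over $\mathcal{P}(\mathbb{S},\mathbb{M})$ the number $\lceil n_i/2\rceil$ of elementary transformations would jump along the loci where a regular pole order drops, and no single $\mathbb{P}^1$-bundle $\mathcal{Q} \to \mathcal{C}$ restricting fiberwise to the minimal oper could exist there.
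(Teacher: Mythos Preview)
Your proof is correct and follows essentially the same route as the paper's own argument: trivialize the affine bundle $\mathcal{P}(X/B;\mathcal{D})$ locally over $B$ via a relative non-singular projective structure (\autoref{existence-relative-proj-struct}), pull back the universal curve to obtain $F:\mathcal{C}\to U$ and the tautological relative meromorphic projective structure, build the singular Riccati triple on the trivial $\mathbb{P}^1$-bundle with section at infinity, and then perform relative elementary transformations along the components $\mathcal{D}_i$ to reach the minimal oper fiberwise (family versions of Propositions~\ref{prop-transversality-of-sigma}, \ref{pole1form-vs-poleschwarzian}, \ref{unique-birational-model}). Your closing remark on why constancy of the $n_i$ is essential for the simultaneous elementary transformations is a useful addition that the paper leaves implicit.
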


\begin{proof}
	Recall from section~\ref{section-meromorphic-projective-structures} that $B$ denotes the Teichmüller space $\mathcal{T}(S,d)$ and $f:X\rightarrow B$ denotes the universal family of marked curves. Pick a point $\tau_0\in\mathcal{P}^\circ(\mathbb{S},\mathbb{M},(n_i))$ and denote by $b_0$ its image in $B$. Replacing $B$ by an open ball, we know from \autoref{existence-relative-proj-struct} that there exists a relative projective structure (without pole) $\alpha$ on $f:X\rightarrow B$. It induces a holomorphic section $s_0$ of $\mathcal{P}(X/B;\mathcal{D})\rightarrow B$. This section in turn induces a bijection
	\begin{align*}
	\Phi_{s_0}:\mathcal{P}(X/B;\mathcal{D})&\longrightarrow\mathcal{Q}(X/B;\mathcal{D})\\
	[(t,P),\phi]&\longmapsto (t,(P+\phi)-(P_{s_0(t)}+\phi_{s_0(t)}))\text{ if }s_0(t)=[(t,P_{s_0(t)}),\phi_{s_0(t)}].
	\end{align*}
	
	Then, define $U$ as the preimage of $B$ in $\mathcal{P}^\circ(\mathbb{S},\mathbb{M},(n_i))$ and form the fibred product \[\mathcal{C}:=U\times_B X\] and define $F:\mathcal{C}\rightarrow U$ as the pullback of $f$.
	
	\[\xymatrix{
		\mathcal{C}=U\times_B X \ar[d]^{F} \ar[rrr] & & & X\ar[d]^{f}\\
		U \ar[r]^{M_{|U}} &\mathcal{P}(X/B;\mathcal{D},(n_i)) \ar@{^{(}->}[r] &\mathcal{P}(X/B;\mathcal{D}) \ar[dr]^{\Phi_{s_0}} \ar[r] & B \ar@/_1pc/_-{s_0}[l]\\
		& & & \mathcal{Q}(X/B;\mathcal{D}) \ar[u]
	}\] 
	
	For all $\tau\in U$, let us denote $t=\operatorname{pr}_1\circ \Phi_{s_0}\circ M(\tau)$ and $\phi_\tau:=\operatorname{pr}_2\circ \Phi_s\circ M(\tau)$ ($\operatorname{pr}_i$ means the projection to the $i$-th factor). The projective structure $P_t:=(P_{s_0(t)}+\phi_{s_0(t)})+\phi_\tau$ on $C_t$ induced by the above relative projective structure is also a projective structure $P_\tau$ on $C_\tau:=F^{-1}(\tau)$ (by construction of the fibered product). This defines a holomorphic family of meromorphic projective structure on $U$ (this is the tautological family: roughly speaking, it is $\tau$ on the fiber over $\tau$).

	Denote by $\tilde{D}$ and $\tilde{\alpha}$ the pullbacks of $\mathcal{D}$ and $\alpha$ on $\mathcal{C}$, respectively. According to \cite[Lem. 7.4]{Allegretti_2020}, the holomorphic family defined in the preceding paragraph induces a relative projective structure without poles on the family of curves $\mathcal{C}\smallsetminus\tilde{\mathcal{D}}\rightarrow U$ and subtracting (the restriction of) $\tilde{\alpha}$ to it gives the holomorphic family of relative quadratic differentials (see \cite[Prop. 7.3]{Allegretti_2020}) corresponding to $\phi_\tau$.

	Now consider the trivial bundle $\Pi_0:\mathcal{C}\times\mathbb{P}^1\rightarrow\mathcal{C}$. Chose moreover a standard coordinate $y$ on $\mathbb{P}^1$ and consider the section $\mathfrak{S}_\infty:=\{y=\infty\}$ of this trivial bundle. From the pair $(P_\tau,\phi_\tau)$, we know (see \autoref{construction-oper}) how to construct a triple $(\Pi_{0|C_\tau}:C_\tau\times\mathbb{P}^1\rightarrow C_\tau,\mathcal{F}_\tau,\mathfrak{S}_{\infty|C_\tau})$ describing the projective structure associated to $\tau$. 
	
	The polar divisor on $C_\tau$ of the Riccati foliation $\mathcal{F}_\tau$ is not necessarily minimal, and $\mathfrak{S}_{\infty|C_\tau}$ is not necessarily everywhere transverse to $\mathcal{F}_\tau$. To get an oper with minimal polar divisor, we apply elementary transformations along the locus of the divisor $\mathcal{D}$ (this is a family version of Propositions \ref{prop-transversality-of-sigma}, \ref{pole1form-vs-poleschwarzian} and \ref{unique-birational-model}). Each component $\mathcal{D}_i$ is an hypersurface in $\mathcal{C}$. An elementary transformation centered at $\mathfrak{S}_\infty(\mathcal{D}_i)$ can be described as follows: first blow-up $\mathfrak{S}_\infty(\mathcal{D}_i)$ and then contract the strict transform of $\Pi_0^{-1}(\mathcal{D}_i)$. We apply elementary transformations until we get an oper above $C_\tau$ for all $\tau\in U$.

	By construction, the resulting objects satisfy the requirements of the statement.
\end{proof}

This lemma remains true if we replace $\mathcal{P}^\circ(\mathbb{S},\mathbb{M},(n_i))$ by one of its submanifolds.

\section{A smooth family of singular Riccati foliations}

Let $C$ be a smooth curve and $E\rightarrow C$ be a rank $k$ holomorphic vector bundle, that we will frequently denote simply by $E$. Its sheaf of holomorphic sections is a locally free sheaf of $\mathcal{O}_C$-modules of rank $k$, that we denote by $E$ as well. The sheaf of $\mathcal{O}_C$-modules associated with a divisor $D$ on $C$ is denoted by $\mathcal{O}_C(D)$, and it is defined by setting
\[U\longmapsto\mathcal{O}_C(D)(U):=\{\text{$f:U\rightarrow\mathbb{C}$ $|$ $f$ is meromorphic and $\operatorname{div}(f)\geq -D$}\}.\]
We denote $\Omega^1_C(D):=\Omega^1_C\otimes_{\mathcal{O}_C}\mathcal{O}_C(D)$ the sheaf of meromorphic $1$-forms with polar divisor "at most" $D$.

\begin{defi}
	A \emph{meromorphic connection $\nabla$ on the holomorphic vector bundle $E$} is a $\mathbb{C}$-linear morphism of sheaf
	\[\nabla:E\longrightarrow E\otimes_{\mathcal{O}_C}\Omega^1_C(D)\]
	for some effective divisor $D$, satisfying the \emph{Leibniz rule}, i.e. for any open set $U\subset C$, any function $f\in\mathcal{O}_C(U)$ and any local section $s$ of $E$ over $U$, $\nabla(fs)=f\nabla s+s\otimes df$.
\end{defi}

Note that because $C$ has dimension $1$, the connection is automatically flat.

Often, in the context of moduli spaces, it is the couple $(E,\nabla)$ that we call a connection. We say that the connection $(E,\nabla)$ has rank $k$, referring to the rank of the vector bundle $E$.

Consider for instance the rank $2$ case. Concretely, over an open set $U\subset C$ where the bundle is trivial and for a given local frame $(e_1,e_2)$, such a connection is represented by a matrix $1$-form $\Omega=(\omega_{ij})_{ij}\in\operatorname{Mat}(2,\Omega^1_C(D)(U))$ called the \emph{connection matrix}, whose coefficients are defined by the relations $\nabla e_j=\sum_{i=1}^2e_i\otimes \omega_{ij}$. Conversely, given a matrix $1$-form $\Omega$ on $U$, we can define a connection $\nabla$ on the trivial bundle $U\times\mathbb{C}^2$ as follows. The image of a section $s=\sum_{j=1}^2 y_je_j$ is of course defined by \[\nabla s:=\sum_{i=1}^2 e_i\otimes\left(dY+\Omega Y\right)_i,\] where $Y:=\begin{pmatrix}
y_1\\
y_2
\end{pmatrix}$. Thus, we may abusively write
\begin{equation*}
\nabla=d+\Omega
\end{equation*}
if we remember that this expression depends on the local frame that we have chosen. Whenever we pullback by a holomorphic \emph{gauge transformation} $\tilde{Y}\mapsto G\tilde{Y}$, the connection matrix becomes \begin{equation*}
\tilde{\Omega}=G^{-1}\Omega G+G^{-1}dG.
\end{equation*}
A collection of matrix $1$-forms -- one for each local trivialization of $E$ -- satisfying this gluing property represents a unique connection on $E$. Note that since $G^{-1}dG$ is holomorphic, $G$ conjugates the principal part of $\Omega$ to the principal part of $\tilde{\Omega}$. The poles and pole orders of the connection are defined to be those of its matrices. We can chose $D$ to be the effective \emph{polar divisor} of $\nabla$, though strictly speaking our definition does not require this.

The \emph{Poincaré rank} of $\nabla$ at a pole of order $m$ is defined to be $m-1$. It might change after a meromorphic gauge transformation. The \emph{minimal} Poincaré rank in the equivalence class of the system up to meromorphic gauge transformation is sometimes called the \emph{true} Poincaré rank.

\begin{defi}\label{minimal-polar-divisor-linear-connection}
	The polar divisor of a meromorphic connection $(E,\nabla)$ on $C$ is said to be \emph{minimal} if the pole order at each pole cannot be decreased by a meromorphic gauge transformation.
\end{defi}

\begin{defi}
	We say that a connection $(E,\nabla)$ is \emph{reducible} if there exists an \emph{invariant subbundle}, i.e. a subbundle $0\subsetneq L\subsetneq E$ satisfying $\nabla(L)\subset L\otimes\Omega^1_C(D)$. Otherwise, we say it is \emph{irreducible}.
\end{defi}

\paragraph{Fuchs relation.}	The \emph{trace connection} $(\operatorname{det}(E),\operatorname{tr}(\nabla))$ is the connection induced by $(E,\nabla)$ on the determinant bundle of $E$. In local trivialization coordinates, its connection matrices are the traces of the matrices of $\nabla$. We say a connection is \emph{trace-free} if its trace is isomorphic to the horizontal connection $d$ on the trivial line bundle~$\mathcal{O}_C$.

The following relation holds for arbitrary rank and is a consequence of the residue formula (for a proof, see \cite[Cor. 17.35]{MR2363178}).

\begin{prop}[\emph{Fuchs relation}]
	If $\nabla$ is a meromorphic connection on a smooth vector bundle $E$ over a smooth compact curve $C$, then
	\begin{equation}\label{Fuchs-relation}
	\sum_{p\in C} \operatorname{res}_{p}(\operatorname{tr}(\nabla))+\operatorname{deg}(E)=0,
	\end{equation}
	where the degree of $E$ is by definition the degree of its determinant bundle.
\end{prop}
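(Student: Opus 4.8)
The plan is to reduce the statement to the rank-one case and then invoke the residue formula for meromorphic $1$-forms on a compact Riemann surface. First I would note that everything appearing in \eqref{Fuchs-relation} is determined by the trace connection: the quantity $\operatorname{deg}(E)$ equals $\operatorname{deg}(\operatorname{det}E)$ by definition, and each $\operatorname{res}_p(\operatorname{tr}(\nabla))$ is by definition the residue at $p$ of the local connection form of the meromorphic line-bundle connection $(\operatorname{det}E,\operatorname{tr}(\nabla))$. So it suffices to prove the relation for an arbitrary meromorphic connection $(L,\nabla)$ on a holomorphic line bundle $L\to C$, namely $\sum_{p\in C}\operatorname{res}_p(\nabla)+\operatorname{deg}(L)=0$. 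Along the way one should check that $\operatorname{res}_p(\nabla)$ is well defined, i.e. independent of the trivialization chosen near $p$: under a holomorphic nowhere-vanishing change of frame $g$ the local connection form $\omega$ becomes $\omega+dg/g$, and $dg/g$ is holomorphic, hence contributes no residue.

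Next I would choose a global meromorphic (rational) section $s$ of $L$, which exists since $C$ is a compact Riemann surface. Writing $\nabla s=s\otimes\eta$ unambiguously defines a global meromorphic $1$-form $\eta$ on $C$ — morally the logarithmic derivative of $s$ with respect to $\nabla$. In a local trivialization near $p$ with frame $e$, writing $\nabla=d+\omega$ and $s=fe$ with $f$ meromorphic, the Leibniz rule gives $\nabla s=s\otimes(\omega+df/f)$, whence $\eta=\omega+df/f$ near $p$ and therefore
\[
\operatorname{res}_p(\eta)=\operatorname{res}_p(\nabla)+\operatorname{ord}_p(f).
\]

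Finally I would sum over all $p\in C$. The residue formula on the compact curve $C$ gives $\sum_{p\in C}\operatorname{res}_p(\eta)=0$, while $\sum_{p\in C}\operatorname{ord}_p(f)=\operatorname{deg}(\operatorname{div}(s))=\operatorname{deg}(L)$ because the divisor of any meromorphic section of a line bundle has degree equal to the degree of that bundle. Adding the local identities over $p$ then yields $\sum_{p\in C}\operatorname{res}_p(\nabla)+\operatorname{deg}(L)=0$, and unwinding the reduction to the trace connection gives \eqref{Fuchs-relation}.

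The substantive inputs are the residue formula on a compact curve (a Stokes-type computation) and the existence of a rational section of $L$; everything else is bookkeeping with the Leibniz rule. I expect the step needing the most care to be confirming that $\operatorname{res}_p(\operatorname{tr}(\nabla))$ is trivialization-independent, so that the left-hand side of \eqref{Fuchs-relation} is even well posed — and, more generally, keeping straight that the local residue of the connection and the order of vanishing of $s$ recombine exactly into the residue of the globally defined form $\eta$.
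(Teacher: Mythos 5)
Your argument is correct and is precisely the residue-formula proof that the paper itself invokes (it gives no proof of its own but cites \cite[Cor. 17.35]{MR2363178}, whose argument is exactly this reduction to the determinant line bundle followed by an application of the residue theorem to the logarithmic derivative of a global meromorphic section). All the steps check out: the trace form does transform as a connection on $\det E$ (since $\operatorname{tr}(G^{-1}dG)=d\log\det G$), the local identity $\operatorname{res}_p(\eta)=\operatorname{res}_p(\nabla)+\operatorname{ord}_p(f)$ is right, and the two global inputs ($\sum_p\operatorname{res}_p(\eta)=0$ and $\deg\operatorname{div}(s)=\deg L$) are used correctly.
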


\subsection{Riccati foliations and meromorphic rank \texorpdfstring{$2$}{2} connections}

Let $(E,\nabla)$ be a rank $2$ meromorphic connection on $C$. Consider a non-zero local section $s=\sum_{j=1}^2 y_je_j$ of $E$ that is flat for $\nabla$, i.e. $\nabla s=0$. As usual, $(e_1,e_2)$ is a local frame of $E$ and $\Omega=(\omega_{ij})_{ij}$ denotes the connection matrix of $\nabla$ with respect to this frame. Assume that $y_2\neq 0$ and denote $y=y_1/y_2$. Computing the logarithmic derivative of $y$ leads to the Riccati equation \[dy+\omega_{2,1} y^2+(\omega_{2,2}-\omega_{1,1})y-\omega_{1,2}=0.\]
This equation defines the projectivization $\mathbb{P}(E,\nabla):=(\mathbb{P}(E),\mathbb{P}(\nabla))$ of the connection $(E,\nabla)$, i.e. a Riccati foliation on the $\mathbb{P}^1$-bundle $\mathbb{P}(E)$.

Thus, to a rank $2$ connection we can associate its trace and its projectivization,
\begin{equation*}
(E,\nabla)\longmapsto\begin{cases*}(\det(E),\operatorname{tr}(\nabla))\\(\mathbb{P}(E),\mathbb{P}(\nabla)).\end{cases*}
\end{equation*}
Now, can we always find a rank $2$ connection with prescribed trace and projectivization? In other words, is it possible to \emph{lift} a given Riccati foliation $(Q,\mathcal{F})$ to a rank $2$ meromorphic connection $(E,\nabla)$ with a given trace $(L,\nabla_L)$? If it exists, such a lift is not unique. Indeed, $\mathbb{P}(E,\nabla)=\mathbb{P}(\tilde{E},\tilde{\nabla})$ if and only if there exists a rank $1$ meromorphic connection $(L',\nabla_{L'})$ such that $(\tilde{E},\tilde{\nabla})=(E\otimes L',\nabla\otimes\nabla_{L'})$. This can be seen by comparing their cocycles and connection matrices: if the cocycles of $E$ and $L'$ with respect to a common open cover $(U_i)$ of $C$ are respectively $(G_{ij})$ and $(g_{ij})$, then $E\otimes L'$ have cocycles $G_{ij}\cdot g_{ij}$. If $\nabla=d+\Omega_i$ and $\nabla_{L'}=d+\omega_i$ with respect to some trivialization charts, then $\nabla\otimes\nabla_{L'}=d+(\Omega_i+\omega_iI)$. We call this operation a \emph{twist} of $(E,\nabla)$ by the rank $1$ meromorphic connection $(L',\nabla_{L'})$. For any rank $1$ meromorphic connection $(L',\nabla_{L'})$,
\[\begin{cases*}
\operatorname{det}(E\otimes L')=\operatorname{det}(E)\otimes L'^{\otimes2}\\
\operatorname{tr}(\nabla\otimes\nabla_{L'})=\operatorname{tr}(\nabla)\otimes\operatorname{tr}(\nabla_{L'})^{\otimes 2}.
\end{cases*}\]
Consequently, such a lift $(E,\nabla)$ is unique up to a twist by a tensor square root of the rank~$1$ trivial connection $(\mathcal{O}_C,d)$. Note that this square root is necessarily a \emph{holomorphic} connection. If $g$ is the genus of $C$, there are $2^{2g}$ such elements (indeed, they correspond via the Riemann-Hilbert correspondence --which preserves the group structures-- to the $2^{2g}$ elements of $\operatorname{Hom}(\pi_1(C),\{\pm1\})$), whence at most $2^{2g}$ lifts.

Let us return to the question of the existence of such a lift. Every $\mathbb{P}^1$-bundle on a curve is the projectivization of a rank $2$ vector bundle (see \cite[Intro.]{MR0276243}), and this vector bundle is unique up to a tensor product with a line bundle $L'$. Let $E$ be a vector bundle such that $\mathbb{P}(E)=Q$. We are looking for a line bundle $L'$ such that $\det(E\otimes L')=\det(E)\otimes\det(L')^{\otimes2}=L$, that is, a square root of $L\otimes\det(E)^{\otimes(-1)}$ in the Picard group $\operatorname{Pic}(C)$. Such a square root exists if and only if the degree of $L\otimes\det(E)^{\otimes(-1)}$ is even. As for the connection $\nabla_L$ and the Riccati foliation $\mathcal{F}$, suppose they are given locally over each open set of an open cover $(U_i)$ as $d+\delta_i$ and $dy+y^2\alpha_i+y\beta_i+\gamma_i=0$, respectively. Then, the connection on $E\otimes L'$ defined locally by
\begin{equation*}
dY+\begin{pmatrix}
\frac{\delta_i-\beta_i}{2} & -\gamma_i\\
\alpha_i & \frac{\delta_i+\beta_i}{2}
\end{pmatrix}Y=0
\end{equation*}
has trace $\nabla_L$ and projectivization $\mathcal{F}$. In conclusion, we get the following proposition.

\begin{prop}\label{prop:lift}
	Let $(L,\nabla_L)$ and $(Q,\mathcal{F})$ be a rank $1$ meromorphic connection and a singular Riccati foliation on $C$, respectively. Let $E$ be a rank $2$ vector bundle on $C$ such that $\mathbb{P}(E)=Q$. There exists a rank $2$ meromorphic connection on $C$ with trace $(L,\nabla_L)$ and projectivization $(Q,\mathcal{F})$ if and only if \[\deg(L)-\deg(E)\equiv 0~ \operatorname{mod} ~2.\] This lift is unique up to a twist by a tensor square root of the rank $1$ trivial connection $(\mathcal{O}_C,d)$.
\end{prop}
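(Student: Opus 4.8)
The plan is to follow the chain of reductions sketched just before the statement, separating the argument into a uniqueness part, a Picard-group obstruction governing existence, and an explicit local construction of the connection.

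\textbf{Uniqueness.} First I would take two lifts $(\mathcal{E}_1,\nabla_1)$ and $(\mathcal{E}_2,\nabla_2)$, each with trace $(L,\nabla_L)$ and projectivization $(Q,\mathcal{F})$. Since they have the same projectivization, the comparison of cocycles and connection matrices recalled above produces a rank $1$ meromorphic connection $(L',\nabla_{L'})$ with $(\mathcal{E}_2,\nabla_2)=(\mathcal{E}_1\otimes L',\nabla_1\otimes\nabla_{L'})$. Applying $\det$ and $\operatorname{tr}$, and using that the trace is the same on both sides, forces $(L',\nabla_{L'})^{\otimes2}\cong(\mathcal{O}_C,d)$; that is, $(L',\nabla_{L'})$ is a tensor square root of the trivial connection. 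As observed above, such a square root is automatically holomorphic, and there are exactly $2^{2g}$ of them, matching under the Riemann--Hilbert correspondence the $2^{2g}$ elements of $\operatorname{Hom}(\pi_1(C),\{\pm1\})$. This is precisely the uniqueness clause.

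\textbf{Existence: the obstruction.} Next I would fix once and for all a rank $2$ bundle $E$ with $\mathbb{P}(E)=Q$ (possible since every $\mathbb{P}^1$-bundle over a curve is a projectivization, \cite{MR0276243}). The underlying bundle of any lift is then $E\otimes L'$ for some line bundle $L'$, and the trace condition becomes $\det(E)\otimes L'^{\otimes2}\cong L$, i.e. $L'$ must be a square root of $N:=L\otimes\det(E)^{-1}$ in $\operatorname{Pic}(C)$. Taking degrees shows $\deg(N)=\deg(L)-\deg(E)$ must be even; conversely, when it is even one extracts a square root by writing $N$ as $\mathcal{O}_C\!\left(\tfrac{\deg N}{2}\,p\right)^{\otimes2}\otimes N_0$ with $p\in C$ and $N_0\in\operatorname{Pic}^0(C)$, and using that the complex torus $\operatorname{Pic}^0(C)$ is divisible. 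Hence a suitable $L'$ exists if and only if $\deg(L)\equiv\deg(E)\pmod 2$.

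\textbf{Existence: the construction.} Granting the parity condition and a choice of such $L'$, so that $\det(E\otimes L')\cong L$ and $\mathbb{P}(E\otimes L')=Q$, I would pick an open cover $(U_i)$ trivializing $E\otimes L'$; the induced trivialization of $Q$ puts $\mathcal{F}$ into Riccati form $dy+y^2\alpha_i+y\beta_i+\gamma_i=0$, and I would trivialize $L$ so that $\nabla_L=d+\delta_i$, and then set
\begin{equation*}
\Omega_i:=\begin{pmatrix}\dfrac{\delta_i-\beta_i}{2} & -\gamma_i\\[4pt]\alpha_i & \dfrac{\delta_i+\beta_i}{2}\end{pmatrix}.
\end{equation*}
A one-line check gives $\operatorname{tr}\Omega_i=\delta_i$, so $d+\Omega_i$ induces the trace connection $(L,\nabla_L)$, and the Riccati equation attached to $d+\Omega_i$ (via $y=y_1/y_2$ and the formula at the start of this subsection) is exactly $\mathcal{F}$. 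The remaining — and, I expect, only genuinely delicate — point is the gluing: writing $G_{ij}$ for the transition matrices of $E\otimes L'$, so that $\mathbb{P}(G_{ij})$ is the transition cocycle of $Q$ and $\det G_{ij}$ that of $L$, one must verify $\Omega_j=G_{ij}^{-1}\Omega_iG_{ij}+G_{ij}^{-1}dG_{ij}$ from the Riccati transformation law satisfied by $(\alpha_i,\beta_i,\gamma_i)$ under $\mathbb{P}(G_{ij})$ together with the abelian gauge law for $\delta_i$ under $\det G_{ij}$. This forces one to coordinate the trivializations of $E\otimes L'$, $Q$ and $L$ carefully and push through a $2\times2$ matrix identity; everything else is linear algebra and the divisibility of $\operatorname{Pic}^0(C)$.
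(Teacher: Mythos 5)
Your proposal follows essentially the same route as the paper: uniqueness via the twist-by-a-square-root-of-$(\mathcal{O}_C,d)$ argument, existence reduced to finding a square root of $L\otimes\det(E)^{\otimes(-1)}$ in $\operatorname{Pic}(C)$ (the parity obstruction), and the same explicit $2\times2$ connection matrix built from $(\alpha_i,\beta_i,\gamma_i)$ and $\delta_i$. The only additions are your justification of the square-root existence via divisibility of $\operatorname{Pic}^0(C)$ (which the paper asserts without proof) and your honest flagging of the cocycle-compatibility check, which the paper likewise leaves to the reader; both are fine.
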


\begin{rmk}
	Since we are interested in projective structures, there is a priori no reason for us to speak about linear connections. However, some results we will use are formulated in the literature in the setting of linear connection.
\end{rmk}

\subsection{Projective structures and opers for the group \texorpdfstring{$\operatorname{GL}(2,\mathbb{C})$}{GL(2,C)}}\label{sec:proj-struct-GL-opers}

\paragraph{Some properties of $\mathbb{P}^1$-bundles.} Let us first recall some useful facts regarding $\mathbb{P}^1$-bundles and their sections; proofs can be found in Maruyama's book \cite[Chap. I]{MR0276243}.

\begin{facts}\label{P1-bundle-facts}
	Let $E$ be a holomorphic rank $2$ vector bundle on a genus $g$ curve $C$, and denote $Q=\mathbb{P}(E)$ its projectivization. Every $\mathbb{P}^1$-bundle on $C$ arises in this way. To give a (holomorphic) section of $Q$ is equivalent to give a line subbundle of $E$, and there always exists one. Degrees of line subbundles of $E$ are bounded above; a line subbundle of $E$ is said to be \emph{maximal} if its degree is maximal. We denote the maximal degree by $M(E)$.
	
	Let $\sigma$ be a section of $Q$ and $L_\sigma$ be the corresponding line subbundle of $E$. The self-intersection number of $\sigma$ is equal to the degree of its normal bundle $N_\sigma$, which satisfies $\sigma^*(N_\sigma)=\operatorname{det}(E)\otimes L_\sigma^{\otimes(-2)}$. Thus, \[\sigma\cdot\sigma=\deg(E)-2\deg(L_\sigma).\] Maximal line subbundles of $E$ are in bijective correspondence with \emph{minimal} sections of $Q$, i.e. with section with minimal self-intersection number $N(E):=\deg(E)-2M(E)$. In fact, since this number depends only on $Q$, we also denote it by $N(Q)$. It is at most $g$. The parity of $\sigma\cdot\sigma$, for any section $\sigma$, is the same as the parity of $N(Q)$.
	
	In fact $N(Q)\equiv\operatorname{deg}(E)$ mod $2$ and this parity also determines the topological class of the $\mathbb{S}^2$-bundle subjacent to $\mathbb{P}(E)$ (see \cite{MR983870}, \cite{https://doi.org/10.48550/arxiv.1410.4976}, \cite[Sec. 2.1]{MR2647972}).

	If $\sigma$ is minimal and $\sigma\cdot\sigma=N(Q)<0$, then any other section $\sigma'$ has a self-intersection number $\sigma'\cdot\sigma'\geq-N(Q)$ (there is a gap in the possible values).
	
	Finally, we have that
	\begin{enumerate}
		\item if $N(Q)<0$, then there is a unique maximal line subbundle of $E$,
		\item if $N(Q)=0$ and $E$ is indecomposable, then again there is a unique maximal line subbundle of $E$,
		\item if $N(Q)=0$, $E$ is decomposable but $E\not\simeq L\oplus L$ for any line subbundle $L$ of $E$, then there are only two maximal line subbundles $L_1,L_2$ of $E$, and in fact $E=L_1\oplus L_2$,
		\item if $E=L\oplus L$ for some line subbundle $L$ of $E$, then $E$ has infinitely many maximal line subbundles, all of which are isomorphic to $L$.
	\end{enumerate}
	In fact, if $N(Q)=0$, $\deg(E)=0$ and $E$ is decomposable, we have $E=L\oplus L^{-1}$ for some line subbundle $L$ and the two last cases correspond respectively to $L^{\otimes 2}\not\simeq \mathcal{O}_C$ and $L^{\otimes 2}\simeq\mathcal{O}_C$.
\end{facts}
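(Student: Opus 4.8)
The plan is to reduce everything to three classical inputs --- Tsen's theorem on the triviality of the Brauer group of a curve, the normal bundle formula for a section coming from the relative Euler sequence, and Nagata's bound on the degree of a maximal line subbundle of a rank $2$ bundle --- the remaining assertions being bookkeeping with short exact sequences of line bundles. \emph{Step 1 (the dictionary between $\mathbb{P}^1$-bundles, rank $2$ bundles, sections and line subbundles).} The exact sequence of sheaves of groups $1\to\mathbb{G}_m\to\operatorname{GL}(2,\mathbb{C})\to\operatorname{PGL}(2,\mathbb{C})\to1$ gives $H^1(C,\operatorname{GL}_2)\to H^1(C,\operatorname{PGL}_2)\to H^2(C,\mathcal{O}_C^\times)=\operatorname{Br}(C)$, and $\operatorname{Br}(C)=0$ since the function field $\mathbb{C}(C)$ is $C_1$ (Tsen); hence every $\mathbb{P}^1$-bundle $Q$ is $\mathbb{P}(E)$, with $E$ unique up to the twist $E\mapsto E\otimes L'$ given by the residual action of $\operatorname{Pic}(C)=H^1(C,\mathcal{O}_C^\times)$. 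A section $\sigma$ of $\mathbb{P}(E)\to C$ is the same datum as the line subbundle $L_\sigma:=\sigma^*\mathcal{O}_{\mathbb{P}(E)}(-1)\subset\sigma^*\pi^*E=E$; a line subbundle always exists because a nonzero element of $H^0(C,E\otimes\mathcal{O}_C(nP))$ (nonempty for $n\gg0$ by Riemann--Roch) generates a rank $1$ subsheaf whose saturation is one. From the tautological inclusion $\mathcal{O}_{\mathbb{P}(E)}(-1)\hookrightarrow\pi^*E$ one gets $T_{\mathbb{P}(E)/C}\cong\mathcal{O}_{\mathbb{P}(E)}(1)\otimes(\pi^*E/\mathcal{O}_{\mathbb{P}(E)}(-1))$; restricting along $\sigma$ and using $N_\sigma=\sigma^*T_{\mathbb{P}(E)/C}$ yields $\sigma^*N_\sigma=L_\sigma^{\otimes(-1)}\otimes(E/L_\sigma)=\det(E)\otimes L_\sigma^{\otimes(-2)}$, hence $\sigma\cdot\sigma=\deg(E)-2\deg(L_\sigma)$.

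\emph{Step 2 (the numerical invariants).} That $\{\deg L_\sigma\}$ is bounded above, that the maximum $M(E)$ is attained, and that $N(E):=\deg(E)-2M(E)\le g$, is exactly Nagata's theorem (for rank $2$ one may also argue via the Harder--Narasimhan filtration when $E$ is unstable and a Riemann--Roch estimate when it is semistable); I regard this as the one genuinely nontrivial ingredient. The invariance of $N$ under $E\mapsto E\otimes L'$ is immediate from $\deg(E\otimes L')=\deg(E)+2\deg(L')$ and $\deg(L_\sigma\otimes L')=\deg(L_\sigma)+\deg(L')$, so $N=N(Q)$ depends only on $Q$. Since $\sigma\cdot\sigma=\deg(E)-2\deg(L_\sigma)$, every self-intersection number is congruent to $\deg(E)$ modulo $2$; in particular $N(Q)\equiv\deg(E)$ modulo $2$ and all sections have self-intersection of that parity. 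Finally, oriented $\mathbb{S}^2$-bundles over the closed surface $C$ are classified by $H^2(C;\mathbb{Z}/2\mathbb{Z})\cong\mathbb{Z}/2\mathbb{Z}$ through $w_2$, which equals $\deg(E)$ modulo $2$; here I would simply invoke the cited references.

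\emph{Step 3 (the gap and the counting of maximal line subbundles).} Normalize $E$ so that $L:=L_\sigma$ is a maximal line subbundle and write $0\to L\to E\to M\to0$ with $\deg M=\deg(E)-M(E)=M(E)+N(Q)$. For a section $\sigma'\neq\sigma$, the composite $L_{\sigma'}\to E\to M$ cannot be zero (that would force $L_{\sigma'}\subseteq L$, hence $L_{\sigma'}=L$ since both are saturated line subbundles of $E$, i.e. $\sigma'=\sigma$), so it is nonzero, whence $\deg L_{\sigma'}\le\deg M$ and $\sigma'\cdot\sigma'=\deg(E)-2\deg L_{\sigma'}\ge\deg(E)-2\deg M=-N(Q)$; when $N(Q)<0$ this is the announced gap $\sigma'\cdot\sigma'\ge-N(Q)>0>N(Q)$. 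The same comparison proves the four cases. If $N(Q)<0$, a second maximal subbundle $L'$ would give a nonzero map of line bundles $L'\to M$ with $\deg L'=M(E)>\deg M$, impossible, so $L$ is unique. If $N(Q)=0$, such a map $L'\to M$ is an isomorphism, so the sequence splits, $E=L\oplus L'$ with $\deg L=\deg L'=M(E)$: thus $E$ indecomposable forces uniqueness, while if $E=L_1\oplus L_2$ then a maximal subbundle $L'$ projects isomorphically onto $L_1$ or onto $L_2$. When $L_1\not\simeq L_2$ one has $\operatorname{Hom}(L_1,L_2)=H^0(L_1^{\otimes(-1)}\otimes L_2)=0$ (a nontrivial degree $0$ line bundle), and symmetrically, so $L'$ is $L_1$ or $L_2$: exactly two. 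When $L_1\simeq L_2=:L$, i.e. $E=L\oplus L$, the constant lines in the $\mathbb{C}^2$-fibre form a $\mathbb{P}^1$ of maximal subbundles, all isomorphic to $L$: infinitely many. Lastly, in the case $\deg(E)=0$ (using $\det(E)\simeq\mathcal{O}_C$), decomposability gives $E=L\oplus L'$ with $\deg L=\deg L'=0$ and $\det(E)=L\otimes L'\simeq\mathcal{O}_C$, so $L'\simeq L^{\otimes(-1)}$, and $L_1\simeq L_2$ holds precisely when $L^{\otimes2}\simeq\mathcal{O}_C$, which separates the last two cases.

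The main obstacle is the Nagata bound $N(E)\le g$ together with the attainment of $M(E)$ (equivalently, boundedness and properness of the family of line subbundles of maximal degree); once these are granted, everything above is routine diagram chasing with line bundles on $C$, and in fact complete proofs of all of these statements are in Maruyama's book.
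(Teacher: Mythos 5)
Your proof is correct and follows the standard route: the paper itself gives no proof of these facts, deferring entirely to Maruyama's book, and your arguments (Tsen/the exponential sequence for the existence of $E$ with $\mathbb{P}(E)=Q$, the relative Euler sequence for $N_\sigma=\det(E)\otimes L_\sigma^{\otimes(-2)}$, Nagata's bound for $N(Q)\leq g$, and the exact sequence $0\to L\to E\to M\to 0$ for the gap and the counting of maximal subbundles) are exactly the standard ones found there. One minor point in your favour: the paper's final sentence tacitly assumes $\det(E)\simeq\mathcal{O}_C$ (degree $0$ alone does not yield $E=L\oplus L^{-1}$), an hypothesis you correctly make explicit.
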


\begin{rmk}\label{rmk:invariance-section-subbundle}
	A line subbundle $L$ of $E$ is invariant by $\nabla$ if and only if the corresponding section of $\mathbb{P}(E)$ is invariant by the foliation induced by $\nabla$.
\end{rmk}

\paragraph{Opers for the group $\operatorname{GL}(2,\mathbb{C})$.}

\begin{defi}\label{defi:GL-oper}
	A \emph{(meromorphic) oper for the group $\operatorname{GL}(2,\mathbb{C})$ on a complex curve $C$} is a triple $(E,\nabla,L)$, where $E$ is a rank $2$ holomorphic vector bundle, $\nabla$ is a holomorphic (resp. meromorphic) connection on $E$ and $L\subset E$ is a line subbundle satisfying a \emph{non-degeneracy} (or \emph{strictness}) \emph{condition}: the $\mathcal{O}_C$-linear map $\varphi$ defined as the composite
	\begin{align}
	&\varphi:L\xrightarrow{\iota} E\xrightarrow{\nabla}E\otimes\Omega^1_C\xrightarrow{q\otimes\operatorname{id}} (E/L)\otimes\Omega^1_C\\
	(\text{resp. }&\varphi:L\xrightarrow{\iota} E\xrightarrow{\nabla}E\otimes\Omega^1_C(D)\xrightarrow{q\otimes\operatorname{id}} (E/L)\otimes\Omega^1_C(D))\label{strictness-mero}
	\end{align}
	(where $\iota$ is the inclusion, $q$ is the quotient map and $D$ is an effective divisor on $C$) is an isomorphism.
\end{defi}

A $\operatorname{PGL}(2,\mathbb{C})$-oper on $C$ may then be alternatively defined as an equivalence class of $\operatorname{GL}(2,\mathbb{C})$-opers under the following twist relation. Two $\operatorname{PGL}(2,\mathbb{C})$-opers $(E_1,\nabla_1,L_1)$ and $(E_2,\nabla_2,L_2)$ are considered to be equivalent if there is a rank $1$ holomorphic connection $(L,\nabla_L)$ and an isomorphism \[(E_1,\nabla_1)\simeq(E_2,\nabla_2)\otimes(L,\nabla_L)\] sending $L_1$ to $L_2\otimes L$.

The non-degeneracy condition tells us that $L$ is nowhere invariant by $\nabla$, which is exactly the transversality condition of the section $\sigma$ induced by $L$ on $\mathbb{P}(E,\nabla)$. Another condition called the \emph{Griffiths transversality} is asked in the general definition of a $\operatorname{GL}(k,\mathbb{C})$-oper in greater dimensions $k\geq 2$, but we don't need to consider it here since it is automatically satisfied in rank~$k=2$. 

\begin{rmk}
	This alternative definition of a $\operatorname{PGL}(2,\mathbb{C})$-oper on $C$ is equivalent to \autoref{def:mero-PGL-oper} (and \autoref{defi:holo-PGL-oper}, in the holomorphic case). Indeed, as we will show in the following paragraphs, the Riccati foliation associated to a $\operatorname{PGL}(2,\mathbb{C})$-oper must be the projectivization of a rank $2$ linear connection. However, our definitions of meromorphic opers are a little tighter than those suggested by Allegretti and Bridgeland since we impose that the map $\varphi$ in \eqref{strictness-mero} is an isomorphism (compare \cite[Rmk 3.3(i)]{Allegretti_2020}).
\end{rmk}

\paragraph{A self-intersection formula.} Let $(\pi:Q\rightarrow C,\mathcal{F},\sigma)$ be a meromorphic $\operatorname{PGL}(2,\mathbb{C})$-oper with the Riccati foliation having polar divisor $D=\sum_{i=1}^{d}m_ip_i$ on the genus $g$ curve $C$. Because the section $\sigma$ is transverse to $\mathcal{F}$, we have, according to Brunella \cite[Prop. 2.2]{MR3328860} (see also \cite[Prop. 25]{https://doi.org/10.48550/arxiv.1410.4976}),
\begin{equation}\label{self-intersection-number}
\sigma\cdot\sigma=2-2g-\operatorname{deg}(D).
\end{equation}
From this formula, we deduce in particular that the parity of $\deg(E)$, which is the same as the parity of $\sigma\cdot\sigma$, is also equal to the parity of $\deg(D)$. Another consequence is that
\begin{equation}\label{self-intersection-special-cases}
\sigma\cdot\sigma\geq 0\Leftrightarrow
\begin{cases*}
g=0,~\deg(D)=0,1\text{ or }2\\
g=1,~\deg(D)=0.
\end{cases*}
\end{equation}
Thus, apart from those four special values for $(g,\deg(D))$, the self-intersection number of $\sigma$ is negative. This implies that $\sigma$ is the unique minimal section of $Q$ and the only one with negative self-intersection number.

\begin{rmk}\label{rmk:hypothesis}
	Let us denote by $P$ the meromorphic projective structure described by the oper $(\pi:Q\rightarrow C,\mathcal{F},\sigma)$ and by $\mathbb{S}_P$ the associated bordered surface with marked points $\mathbb{M}_P$. Assume, as in \autoref{prop-smooth-moduli-proj-struct}, that if $g=0$ then $|\mathbb{M}_P|\leq 3$. Then, by \autoref{pole1form-vs-poleschwarzian}, this excludes the above three special values when $g=0$ and $\deg(D)=0,1$ or $2$. The fourth one, $(g,\deg(D))=(1,0)$, is not excluded but corresponds to a projective structure without pole; our main theorem (\autoref{thm:local-injectivity}) is already known in this case (cf. \autoref{sec:cplx-proj-struct}) so we might exclude it as well.
\end{rmk}

\paragraph{Lift of a $\operatorname{PGL}(2,\mathbb{C})$-oper to a $\operatorname{GL}(2,\mathbb{C})$-oper.} Suppose we want to \emph{lift} a $\operatorname{PGL}(2,\mathbb{C})$-oper $(\pi:Q\rightarrow C,\mathcal{F},\sigma)$ with polar divisor $D$ to a $\operatorname{GL}(2,\mathbb{C})$-oper on $C$. If $(E,\nabla)$ is a lift of $(Q,\mathcal{F})$, then $\sigma$ corresponds to a unique line subbundle $L$ of $E$ and by \autoref{rmk:invariance-section-subbundle}, $(E,\nabla,L)$ is a $\operatorname{GL}(2,\mathbb{C})$-oper on $C$ whose equivalence class under the twist relation is $(\pi:Q\rightarrow C,\mathcal{F},\sigma)$.

From the self-intersection formula \eqref{self-intersection-number} combined with \autoref{prop:lift}, the possible choices of trace connection for $(E,\nabla,L)$ depend on the parity of the degree of the polar divisor $D$. If it is even, then so is $\deg(E)$ and $(\pi:Q\rightarrow C,\mathcal{F})$ can be lifted to a rank $2$ connection with determinant having any even degree; a trace-free rank $2$ connection, for example. If $\deg(D)$ is odd, however, the determinant bundle must have an odd degree. Here is an example of a valid choice for the trace connection. Note that in this case, the $\operatorname{PGL}(2,\mathbb{C})$-oper has at least one pole, let us choose one, say $p_i$. We would like to choose a trace connection with a single simple pole at $p_i$, so that the corresponding lifts still have polar divisor $D$ (and it is minimal if it was for $(\pi:Q\rightarrow C,\mathcal{F})$). By the Fuchs relation, such a connection must have residue $-1$ at $p_i$, hence trivial local monodromy around $p_i$. By the Riemann-Hilbert correspondence, there exists a unique rank $1$ connection $(\mathcal{O}_C(p_i),\zeta)$ having a single pole of order $1$ at $p_i$ and a trivial global monodromy.

Recall that once we have made a choice of a trace connection, a rank $2$ lift $(E,\nabla)$ is unique up to a twist by one of the $2^{2^g}$ tensor square root of the rank $1$ trivial connection.

\subsection{Formal data of rank \texorpdfstring{$2$}{2} connections and Riccati foliations}

We denote by $\mathbb{C}[[x]]$ the ring of formal power series in the variable $x$, $\mathbb{C}((x))=\mathbb{C}[[x]][x^{-1}]$ its quotient field (the field of formal Laurent series in the variable $x$), and $\mathbb{C}\{x\}$ the ring of convergent power series in the variable $x$ (those with non-zero radius of convergence at $0$).

\paragraph{Local formal invariants.} Here, we work around a singular point, hence the concept of connection (or formal connection) is unnecessary. We thus consider formal rank $2$ linear systems in the complex variable $x$ around a possible pole at $x=0$.

To give a formal classification of such systems is to describe the orbits under the action of formal holomorphic gauge transformations. The following theorem tells us there are "good" representatives in each orbits, called formal normal forms. These in turn provides formal invariants of a system.

\begin{thm}[Section 2 of \cite{MR4423484}]\label{formal-normal-forms}
	Let $dY+\Omega Y=0$ be a rank $2$ linear differential system with \[\Omega=\left(\frac{A_{-m}}{x^m}+\frac{A_{-m+1}}{x^{m-1}}+\cdots+\frac{A_{-1}}{x}+A(x)\right)dx\]
	where $A_{-m},\dots,A_{-1}\in\operatorname{Mat}(2,\mathbb{C})$ are constant matrices, $A_{-m}\neq 0$ and $A\in\operatorname{Mat}(2,\mathbb{C}\{x\})$ is holomorphic. Assume that the pole order $m\in\mathbb{N}^*$ at $x=0$ is \emph{minimal}, i.e. cannot be decreased by a meromorphic gauge transformation of the system.
	
	Then, there exists a formal holomorphic gauge transformation $G\in\operatorname{GL}(2,\mathbb{C}[[x]])$ such that
	\begin{equation*}
	\tilde{\Omega}=G^{-1}\Omega G+G^{-1}dG
	\end{equation*}
	is of one of the following formal normal forms.
	\begin{itemize}
		\item \textbf{Regular case ($m=1$).}
		\begin{align}\label{regular-case}
		\tilde{\Omega}=\begin{pmatrix}
		\lambda^+&0\\
		0&\lambda^-
		\end{pmatrix}dx
		\text{~~ or ~~}
		\tilde{\Omega}=\begin{pmatrix}
		\lambda^+&x^k\\
		0&\lambda^-
		\end{pmatrix}dx,
		\end{align}
		for some non-negative integer $k\in\mathbb{N}$ and $\lambda^\pm=\frac{\lambda_{-1}^\pm}{x}$, where $\lambda^\pm_{-1}\in\mathbb{C}$ are the eigenvalues of $A_{-1}$, such that in the second case $\lambda^+_{-1}=\lambda^-_{-1}+k$. 

		Moreover, the latter formal normal form is unique, while the first one is unique up to the permutation of $\lambda^+$ and $\lambda^-$. %
		\item \textbf{Irregular ($m\geq2$) and unramified ($A_{-m}$ is semi-simple) case.}
		\begin{align}
		\tilde{\Omega}=\begin{pmatrix}
		\lambda^+&0\\
		0&\lambda^-
		\end{pmatrix}dx,
		\end{align}
		for some $\lambda^\pm=\frac{\lambda_{-m}^\pm}{x^m}+\cdots+\frac{\lambda_{-1}^\pm}{x}$ and $\lambda_i^\pm\in\mathbb{C}$ ($\lambda_{-m}^\pm$ are the eigenvalues of $A_{-m}$).
		
		Moreover, this formal normal form is unique up to the permutation of $\lambda^+$ and $\lambda^-$.
		\item \textbf{Irregular ($m\geq2$) and ramified ($A_{-m}$ is not semi-simple) case.}
		\begin{align}
		\tilde{\Omega}=\begin{pmatrix}
		\alpha&\beta\\
		x\beta&\alpha-\frac{1}{2x}
		\end{pmatrix}dx,
		\end{align}
		for some
		$\begin{cases}
		\alpha=\frac{\alpha_{-m}}{x^m}+\cdots+\frac{\alpha_{-2}}{x^2}+\frac{\alpha_{-1}}{x},~~\alpha_i\in\mathbb{C}\\
		\beta=\frac{\beta_{-m}}{x^m}+\cdots+\frac{\beta_{-2}}{x^2},~~\beta_i\in\mathbb{C}.
		\end{cases}$
		
		Moreover, this formal normal form is unique up to the multiplication of $\beta$ by $-1$.
	\end{itemize}
\end{thm}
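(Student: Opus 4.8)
The plan is to reduce \autoref{formal-normal-forms} to the classical formal reduction theory for meromorphic linear systems --- Hukuhara--Turrittin splitting together with Levelt's treatment of the regular case --- performed by hand, which in rank $2$ is short because the ramification index is at most $2$; this refined rank $2$ statement is the one established in \cite{MR4423484}. Throughout, ``equivalent'' means conjugate by a formal holomorphic gauge transformation $G\in\operatorname{GL}(2,\mathbb{C}[[x]])$. One preliminary remark organizes the diagonal entries of the normal forms: since $\operatorname{tr}(\tilde\Omega)=\operatorname{tr}(\Omega)+d\log\det G$ and $\det G$ is a unit in $\mathbb{C}[[x]]$, the polar part of $\operatorname{tr}(\Omega)$ is an orbit invariant and its holomorphic part can be normalized to zero; accordingly one splits $\Omega=\tfrac{1}{2}\operatorname{tr}(\Omega)\,I+\Omega_{0}$, classifies the trace-free part $\Omega_{0}$, and re-inserts the rigid scalar factor. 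For $\Omega_{0}$ the leading coefficient is trace-free and nonzero, hence either semisimple (equivalently: with two opposite nonzero eigenvalues) or a nonzero nilpotent, which together with $m=1$ versus $m\geq2$ gives exactly the trichotomy of the statement.

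I would then dispatch the irregular cases. In the \textbf{unramified} case ($m\geq2$, $A_{-m}$ semisimple), the Hukuhara--Turrittin splitting lemma --- killing the off-diagonal entries order by order, each step being uniquely solvable because the two eigenvalue-symbols already differ at top order $x^{-m}$ --- produces a formal gauge diagonalizing the system; each diagonal entry is a rank $1$ system, and a scalar gauge $e^{g(x)}$ with $g'$ the holomorphic part of minus the symbol deletes the holomorphic tail, leaving precisely a polar Laurent polynomial $\lambda^{\pm}=\lambda_{-m}^{\pm}/x^{m}+\dots+\lambda_{-1}^{\pm}/x$ (no polar term is removable, since its primitive is not a power series). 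In the \textbf{ramified} case ($m\geq2$, $A_{-m}$ nilpotent $\neq0$) one pulls back along the double cover $x=t^{2}$; there the leading matrix acquires two distinct eigenvalues, so the unramified case gives a diagonal form $\operatorname{diag}(\nu^{+},\nu^{-})\,dt$, and the deck involution $t\mapsto-t$ swaps the two sheets, hence interchanges $\nu^{+}$ and $\nu^{-}$; descending and absorbing the $dt/t=\tfrac{1}{2}\,dx/x$ contribution of the cover into the diagonal yields the asserted ramified form, with $\alpha$ and $\beta$ recording the $t\mapsto-t$-symmetric and antisymmetric parts of $\nu^{\pm}$.

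For the \textbf{regular} case ($m=1$, minimal), put $A_{-1}$ in Jordan form by a constant gauge. If its eigenvalues do not differ by a nonnegative integer (in particular if $A_{-1}$ is semisimple with a single eigenvalue), the standard resonance-free recursion for the Taylor coefficients of $G$, which never meets a vanishing denominator, diagonalizes the system to $\operatorname{diag}(\lambda_{-1}^{+}/x,\lambda_{-1}^{-}/x)\,dx$. If $\lambda_{-1}^{+}=\lambda_{-1}^{-}+k$ with $k\in\mathbb{N}$, the same recursion solves everything except a single obstruction at the resonant order, and a constant diagonal rescaling normalizes its coefficient to $0$ or $1$, yielding the two listed forms; minimality of $m=1$ is exactly what forbids absorbing the residue into the holomorphic part. \textbf{Uniqueness} then comes down to noting that the polar part of the trace, the unordered pair of exponential symbols $\{\lambda^{+},\lambda^{-}\}$ (resp.\ the ramified exponential datum), and the formal monodromy are all formal gauge invariants and the normal forms carry no Stokes data; hence the only residual freedom is the finite symmetry group --- permuting the two rank $1$ summands in the diagonal/regular cases, multiplying $\beta$ by $-1$ (the nontrivial deck transformation of the cover) in the ramified case, and nothing in the resonant triangular case, whose normalized off-diagonal monomial rigidifies the frame (which is why that form is \emph{exactly} unique while the others are unique only up to the stated symmetry).

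The step I expect to be the main obstacle is the resonant regular case: making the reduction bookkeeping precise enough to see that in rank $2$ every resonant obstruction collapses onto the single normalized off-diagonal monomial and that no further formal gauge removes it, together with the companion point that minimality of the pole order $m$ is exactly the hypothesis preventing any of the listed normal forms from being reducible further (and, for the ramified case, that the pull-back along $x=t^{2}$ genuinely splits the leading term). The descent across the double cover --- tracking the $\tfrac{1}{2}\,dx/x$ term and the parity constraints on $\nu^{\pm}$ --- is fiddly but routine once the unramified case is settled.
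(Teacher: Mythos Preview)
Your proposal is correct and aligns with the paper's approach. The paper itself does not give a full proof: it cites \cite{MR4423484} and only supplies a ``proof complement'' explaining how to remove the twists allowed in that reference, via the decomposition $\Omega=[\Omega-\tfrac{1}{2}\operatorname{tr}(\Omega^{<0})I_2]+\tfrac{1}{2}\operatorname{tr}(\Omega^{<0})I_2$, which is exactly your preliminary trace-splitting step. Your sketch then goes further and outlines the classical Hukuhara--Turrittin/Levelt argument that the cited reference carries out, so in that sense you give more than the paper does.

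One small inaccuracy to flag in the ramified case: pulling back along $x=t^2$ alone does \emph{not} make the leading matrix acquire two distinct eigenvalues --- a nilpotent $A_{-m}$ stays nilpotent under base change. One must combine the ramification with a shearing gauge such as $\operatorname{diag}(1,t)$ to split the leading term (the paper itself does exactly this later, in the proof of \autoref{lemma-genericity}). You seem aware this step is delicate, but the phrasing ``there the leading matrix acquires two distinct eigenvalues'' after the pullback alone should be corrected.
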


\begin{proof}[Proof complement]
	A version of this theorem is proven in \cite[Sec. 2]{MR4423484}, allowing twists in addition to gauge transformations. This assumption can be circumvented in the following way. First, it is unnecessary if $\Omega$ has trace-free principal part (or merely trace-free leading coefficient $A_{-m}$). But we can always reduce to this case, by working separately on each terms of the decomposition \[\left[\Omega-\frac{\operatorname{trace}(\Omega^{<0})}{2}I_2\right]+\frac{\operatorname{trace}(\Omega^{<0})}{2}I_2,\] where $\Omega^{<0}$ is the principal part of $\Omega$. Indeed, the first term of this sum has trace-free principal part, while the last term is in the center of $\operatorname{GL}(2,\mathbb{C}[[x]])$ and is thus invariant by formal holomorphic gauge transformation (such a transformation acts by conjugacy on the negative part of the system's matrix).
\end{proof}

\begin{rmk}\label{rmk:invariant-space-vs-eigenspace}
	One must be careful here, not to confuse eigenspaces of the matrix $\Omega$ with subspaces invariant by $d+\Omega$, or with subspaces generated by solutions of the system $dY+\Omega Y=0$.
\end{rmk}

We call the coefficients $(\lambda^+,\lambda^-)$ and $(\alpha,\beta)$ in the above normal forms, considered up to a permutation of $\lambda^+$ and $\lambda^-$ and up to the multiplication of $\beta$ by $-1$, the \emph{basic formal data} of the system. Under the assumption that the pole order $m$ is minimal, the basic formal data of a system together with the value of $m$ fully characterize the formal structure of the system $dY+\Omega Y=0$ (i.e. they determine its equivalence class under formal holomorphic gauge equivalence), except in the regular case\footnote{More precisely: in the regular \emph{resonant} case, i.e. when the eigenvalues of $A_{-1}$ satisfy $|\lambda^--\lambda^+|\in\mathbb{N}$, the system can be formally equivalent to either of the two normal forms \eqref{regular-case}. In the regular non-resonant case, however, only the first one is possible (see \cite[Thm. 16.15]{MR2363178}).}.

\begin{rmk}\label{trace-free-principal-part}
	Since $G$ is formally holomorphic, so is $G^{-1}dG$. Therefore, the gauge transformation $G$ acts by conjugacy on the principal part $\Omega^{<0}$ of the system's matrix. As a consequence, the trace $\operatorname{tr}(\Omega^{<0})$ is a formal invariant of the system. Moreover, it is fully determined by the system's basic formal data.
\end{rmk}

In the regular resonant case, applying the meromorphic gauge transformation (which is a composition of elementary gauge transformations in rank $2$) \[G=\begin{pmatrix}
x^{-k}&0\\
0&1
\end{pmatrix}\] leads to a connection matrix of the form \[\tilde{\tilde{\Omega}}=\begin{pmatrix}
\lambda^-_{-1}&1\\
0&\lambda^-_{-1}
\end{pmatrix}\frac{dx}{x}.\]
Furthermore, in the irregular ramified case, applying the ramification $x=z^2$ and the gauge transformation \[G=\begin{pmatrix}
1&1\\
z&-z
\end{pmatrix}\]
leads to a connection matrix of the form \[\tilde{\tilde{\Omega}}=\begin{pmatrix}
\lambda^+&0\\
0&\lambda^-
\end{pmatrix}dz\]
with $\lambda^\pm=2z(\alpha\pm z\beta)=2(\pm x\beta+z\alpha)$. This ramified formal normal form is unique up to the permutation of $\lambda^+$ and $\lambda^-$.

With those additional transformations, the formal normal forms of \autoref{formal-normal-forms} are all \emph{Hukuhara-Turrittin ramified formal normal forms}. As the above relations shows, their diagonal elements (up to a permutation) can be deduced from the basic formal data, and conversely. Thus, it is just another way to encode the basic formal data, that we will use from now on and call \emph{basic formal data} as well.

In the literature, the system $dY+\Omega Y=0$ is often considered up to formal \emph{meromorphic} gauge transformations. The normal forms obtained in this way are also Hukuharra-Turrittin ramified formal normal forms (see \cite[Thm. 6.8.1]{MR1084379}), but only the set of their diagonal elements modulo $\mathbb{Z}dz/z$ is a formal meromorphic invariant of the system. Those diagonal elements, called "generalized eigenvalues" by Inaba in \cite{MR4545855}, thus lead to a looser notion of formal invariants that we do not use.

\paragraph{Local formal invariants for singular Riccati foliations.} 	Let $\mathcal{F}$ be a singular Riccati foliation on the trivial $\mathbb{P}^1$-bundle over a disc $0\in\mathbb{D}\subset\mathbb{C}$ defined by the equation
\begin{equation}\label{Riccati}
\mathcal{F}:dy+[a(x)y^2+b(x)y+c(x)]\frac{dx}{x^m}=0,~~ (x,y)\in\mathbb{D}_x\times\mathbb{P}^1_y,
\end{equation}
were $a,b,c\in\mathbb{C}\{x\}$ are holomorphic functions. Assume that the pole order $m\in\mathbb{N}^*$ at $x=0$ is \emph{minimal}, i.e. cannot be decreased by a meromorphic gauge transformation.

Locally, this Riccati equation is the projectivization of a unique trace-free rank $2$ system (this is just the local part of the proof of \autoref{prop:lift}). After projectivization of the Hukuhara-Turrittin ramified formal normal form of this system, we immediately deduce a ramified formal normal forms for the Riccati equation (see also \cite[Prop. 5.1]{MR3522824}). In each case (regular, irregular unramified and irregular ramified), the normal form only depends on a coefficient $\lambda=\lambda^--\lambda^+\in\mathbb{C}[z^{-1}]$, well-defined up to a multiplication by $-1$, that we call the \emph{basic formal data} of the Riccati equation.

We may assume without loss of generality that $\{y=0\}$ is transverse to $\mathcal{F}$ (if not, then we can send any transverse section to infinity by a holomorphic gauge transformation of the trivial bundle). Then, there is a unique meromorphic affine gauge transformation such that
\begin{equation*}
\mathcal{F}:dy+\left(y^2+\frac{g(x)}{x^{n}}\right)dx=0
\end{equation*}
with $g\in\mathcal{O}^*(\mathbb{D})$ holomorphic and non-vanishing and $m\leq n\leq 2m$ (the computation is just a meromorphic version of the proofs of Propositions \ref{foliation-vs-schwarzianeq}(i) and \ref{pole1form-vs-poleschwarzian}(ii)). The integer $n$ depends only on the initial singular Riccati equation \eqref{Riccati}.

\begin{defi}
	The \emph{irregularity index} of the equation \eqref{Riccati} at $x=0$ is defined as \[\nu:=\max\left(0,\frac{n-2}{2}\right)\in\frac{1}{2}\mathbb{N}.\]
\end{defi}

The irregularity index is equal to zero if and only if $x=0$ is a regular singularity of equation \eqref{Riccati}. It is a positive integer (resp. half of a positive integer) if and only if $x=0$ is an irregular unramified singularity (resp. an irregular ramified singularity). This is true in particular for Riccati foliations associated to a meromorphic projective structure (see \eqref{construction-oper}), and in that case $n$ is the order of the pole of the projective structure (see \autoref{pole1form-vs-poleschwarzian}(ii) and \cite[proof of Thm. 5.2]{Allegretti_2020}).

Note that the irregularity index is well-defined for any Riccati equation \eqref{Riccati}, i.e. even if $m$ is not minimal. The adjectives "regular", "irregular unramified" and "irregular ramified" then refer to a bimeromorphically gauge equivalent equation with minimal pole order.

\paragraph{Formal solutions, formal local monodromy and residues.} From the Hukuhara-Turrittin ramified formal normal forms, we deduce that to a system $dY+\Omega Y=0$ as in \autoref{formal-normal-forms} there is a formal fundamental solution of the form \[Y(z)=G(z)(z^2)^L e^{Q(z)},\] where $G\in\operatorname{GL}(2,\mathbb{C}((z)))$ corresponds to a gauge transformation putting the system into a normal form, $L\in\operatorname{Mat}(2,\mathbb{C})$ is the \emph{residue matrix} of the matrix $\tilde{\tilde{\Omega}}$ of that normal form, and $Q\in\operatorname{Mat}(2,\mathbb{C}[z^{-1}])$ is a diagonal matrix such that $dQ$ (the so-called \emph{irregular part}) corresponds to all other terms in the principal part of $\tilde{\tilde{\Omega}}$. For some historical details about the Hukuhara-Turrittin normal forms, see \cite[Thm. 3.3.1]{MR3495546}.

When going around the pole at $z=0$ in the counterclockwise direction, this formal solution gets multiplied (to the right) by $e^{2i\pi L}$. This is called the \emph{formal monodromy matrix} of the system, with respect to the fundamental solution $Y$. Another formal solution $YC$, for some $C\in\operatorname{GL}(2,\mathbb{C})$, has formal monodromy $C^{-1}e^{2i\pi L}C$.

Fixing the residue of a formal normal form (either of the theorem \autoref{formal-normal-forms} or of Hukuhara-Turrittin) also fixes the formal monodromy.

\paragraph{Global formal invariants.} We define the \emph{basic formal data} of a meromorphic connection $(E,\nabla)$ having minimal polar divisor $D=\sum_{i=1}^dm_ip_i$ as the tuple of its basic formal data at each pole, presented as the diagonal elements of the Hukuhara-Turrittin ramified formal normal form as above, up to a permutation. Thus, the basic formal data of a connection is an element of the quotient set $((\mathbb{C}[z^{-1}])^2)^d/(\mathfrak{S}_2)^d$. Having in mind the construction of moduli spaces of meromorphic connections with fixed formal data, we call an element \[\Lambda=(\lambda^{+,(i)},\lambda^{-,(i)})_{1\leq i\leq d}\in((\mathbb{C}[z^{-1}])^2)^d\] a \emph{formal data} (dropping the adjective "basic"). This is a basic formal data together with an extra piece of information (namely an ordering, encoded by a sign).

\begin{rmk}\label{Fuchs-relation-ramified}
	The Fuchs relation \eqref{Fuchs-relation} induces a relation on the formal data of $(E,\nabla)$ (cf. \cite[p. 12 and the proof of Prop. 1.3]{MR4545855}).%
\end{rmk}

Similarly, the \emph{formal data} of a Riccati equation $(Q,\mathcal{F})$ having minimal polar divisor $D=\sum_{i=1}^dm_ip_i$ is an element \[\bar{\Lambda}=(\lambda^{(i)})_{1\leq i\leq d}\in(\mathbb{C}[z^{-1}])^d\] such that its equivalence class under the action of $\{\pm1\}^d$ is the basic formal data of $(Q,\mathcal{F})$, i.e. the tuple of its basic formal data at each pole.  We denote by $\lambda^{(i)}_{-1}$ the residue of $\lambda^{(i)}$. Note that, in the irregular ramified case, this is the residue with respect to the variable $z$ and we always have $\lambda_{-1}=0$. In all other cases, this is the residue with respect to the variable $x$.

If $\Lambda=(\lambda^{+,(i)},\lambda^{-,(i)})$ is the formal data of $(E,\nabla)$, we also say $\bar{\Lambda}=(\lambda^{-,(i)}-\lambda^{+,(i)})$ is its \emph{projectivization}. It is the formal data of the projectivization $\mathbb{P}(E,\nabla)$.

\paragraph{The space $\mathcal{P}^\circ(\mathbb{S},\mathbb{M},(\lambda_{-1}^{(i)}))$.} An equivalence class of marked meromorphic projective structure in $\mathcal{P}^\circ(\mathbb{S},\mathbb{M})$ corresponds (cf. \eqref{marquing-covering-map}) to a unique meromorphic projective structure without apparent singularity $P$ on some complex curve $C$ on $S$. Let us denote by $D=\sum_{i=1}^d n_ip_i$ its polar divisor. By \autoref{indigenousbundle-meromorphic}, $P$ corresponds to a unique $\operatorname{PGL}(2,\mathbb{C})$-oper $(\pi:Q\rightarrow C,\mathcal{F},\sigma)$ on $C$, with minimal polar divisor $\tilde{D}=\sum_{i=1}^dm_ip_i$, where  $m_i:=\lceil \frac{n_i}{2}\rceil$. Then, $\nu_i:=\max(0,(n_i-2)/2)$ is the irregularity index of $(\pi:Q\rightarrow C,\mathcal{F})$ at $p_i$. Whether a pole $p_i$ is a regular, irregular, ramified or unramified singularity of the Riccati foliation depends only on $\nu_i$.

There is a well-defined map
\begin{align}
\tilde{R}:\mathcal{P}^\circ(\mathbb{S},\mathbb{M})\longrightarrow\mathbb{C}^d/\{\pm 1\}^d
\end{align}
sending an equivalence class of marked meromorphic projective structures without apparent singularity to the tuple of residues $(\pm\lambda^{(i)}_{-1})_{1\leq i\leq d}$ of the basic formal data of the associated $\operatorname{PGL}(2,\mathbb{C})$-oper. Since the residue at an irregular ramified singularity is equal to zero, the map $\tilde{R}$ may not be surjective. %

\begin{rmk}\label{rk:residues}
	We now have two quantities that we both call the residue at a regular singularity $p_i$ of $P$: $\pm\lambda_{-1}^{(i)}$ and $\operatorname{res}_2(p_i)$ (cf. the definition \eqref{def:residue-of-order-two}). However, they are different! They are related by the formula $(\lambda_{-1}^{(i)})^2=(1-2\operatorname{res}_2(p_i))/4=\theta(p_i)^2/4$.
\end{rmk}

Let $(\pm\lambda_{-1}^{(i)})$ be a point in $\mathbb{C}^d/\{\pm 1\}^d$. Then, we denote by
\[\mathcal{P}^\circ(\mathbb{S},\mathbb{M},(\pm\lambda_{-1}^{(i)}))=\tilde{R}^{-1}((\pm\lambda_{-1}^{(i)}))\]
the set corresponding to projective structures with residues $(\pm\lambda_{-1}^{(i)})$.

We will also denote this subset by $\mathcal{P}^\circ(\mathbb{S},\mathbb{M},(\lambda_{-1}^{(i)}))$, together with the extra information of a choice of a representative of each residue in $\mathbb{C}$.

\begin{prop}\label{prop:fix-residues}
	Assume that is $g=0$, then $|\mathbb{M}|\geq 3$. Then, if it is non-empty, the space $\mathcal{P}^\circ(\mathbb{S},\mathbb{M},(\lambda_{-1}^{(i)}))$ is a smooth submanifold of $\mathcal{P}^\circ(\mathbb{S},\mathbb{M})$ of non-negative even dimension \[6g-6+\sum_{i=1}^d(2\nu_i+3)-\sum_{\substack{i=1\\ \nu_i\in\mathbb{N}}}^d1.\]
\end{prop}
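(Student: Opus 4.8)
The plan is to realize $\mathcal{P}^\circ(\mathbb{S},\mathbb{M},(\lambda_{-1}^{(i)}))$ as a fiber of a holomorphic submersion defined on the open subset $\mathcal{P}^\circ(\mathbb{S},\mathbb{M})$ of the smooth manifold $\mathcal{P}(\mathbb{S},\mathbb{M})$, whose dimension is given by \autoref{prop-smooth-moduli-proj-struct}, and then compute the codimension pole by pole. First I would lift the problem to opers: by \autoref{indigenousbundle-meromorphic} a point of $\mathcal{P}^\circ(\mathbb{S},\mathbb{M})$ corresponds to a unique $\operatorname{PGL}(2,\mathbb{C})$-oper with minimal polar divisor $\tilde D=\sum m_i p_i$, $m_i=\lceil n_i/2\rceil$, and the map $\tilde R$ assigns to it the tuple of residues $(\pm\lambda_{-1}^{(i)})$ of the basic formal data. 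One must first check $\tilde R$ is holomorphic: locally on $\mathcal{P}^\circ(\mathbb{S},\mathbb{M})$ use \autoref{family-of-opers} to obtain a holomorphic family of opers, pass to the associated family of local Riccati equations near each $p_i$, and observe that after the canonical normalization of \autoref{pole1form-vs-poleschwarzian} bringing the equation to the shape $dy+(y^2+g(x)/x^n)dx=0$ with $g$ holomorphic non-vanishing, the residue $\lambda_{-1}^{(i)}$ is, up to sign, an explicit algebraic function of finitely many Taylor coefficients of $g$, hence holomorphic in the parameter; the sign ambiguity is harmless since locally we may fix a square-root branch (there is no branching issue away from $\lambda_{-1}=0$, and at $\lambda_{-1}=0$ the value is simply $0$). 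At an irregular ramified singularity $\lambda_{-1}^{(i)}\equiv0$, so that coordinate is constant and contributes nothing.

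Next I would show $\tilde R$ is a submersion onto its image, where the image is $\mathbb{C}^{d'}$ with $d'=\#\{i:\nu_i\in\mathbb{N}\}$ (the regular and irregular-unramified poles; ramified poles are frozen at residue $0$). This is essentially \cite[Prop. 8.4]{Allegretti_2020}, which states that the order-$2$ residue map $R$ is a holomorphic submersion: by \autoref{rk:residues} one has $(\lambda_{-1}^{(i)})^2=(1-2\operatorname{res}_2(p_i))/4$ at a regular singularity, so on $\mathcal{P}^\bullet(\mathbb{S},\mathbb{M})$ (where $\lambda_{-1}^{(i)}\notin\frac12\mathbb{Z}$, in particular $\neq0$) the map $\lambda_{-1}\mapsto\lambda_{-1}^2$ is a local biholomorphism and $\tilde R$ is a submersion there; at an irregular unramified pole the analogous statement is obtained by the same deformation argument as in Allegretti–Bridgeland, varying the free coefficient $\lambda_{-1}^{(i)}$ of the formal normal form directly (one can deform the quadratic differential to change only the relevant Taylor coefficient of $g$ at $p_i$). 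Putting the components together, $\tilde R$ is a submersion on a dense open set; to get smoothness of the whole fiber $\mathcal{P}^\circ(\mathbb{S},\mathbb{M},(\lambda_{-1}^{(i)}))$ one argues that the constancy of the formal data along the fiber, together with the local surjectivity onto the residue coordinates obtained by explicit local deformations valid at every point (not just on $\mathcal{P}^\bullet$), forces $\tilde R$ to be a submersion at every point of $\mathcal{P}^\circ$, hence every nonempty fiber is a smooth submanifold.

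The dimension count is then bookkeeping. By \autoref{prop-smooth-moduli-proj-struct} and the relation \eqref{k_iversusn_i} rewritten via $k_i+2=n_i$ for irregular $p_i$ and $k_i=0$, $n_i\in\{1,2\}$ for regular $p_i$, one has $\dim\mathcal{P}(\mathbb{S},\mathbb{M})=6g-6+\sum_i(k_i+3)$; and since $2\nu_i=\max(0,n_i-2)=k_i$ by definition of $\nu_i$, this equals $6g-6+\sum_i(2\nu_i+3)$. The subset $\mathcal{P}^\circ(\mathbb{S},\mathbb{M})$ is open, so has the same dimension. Imposing the residues cuts out a fiber of $\tilde R$, whose target has dimension $d'=\#\{i:\nu_i\in\mathbb{N}\}=\sum_{i:\nu_i\in\mathbb{N}}1$ (the ramified poles contribute $0$ to the target), so
\[
\dim\mathcal{P}^\circ(\mathbb{S},\mathbb{M},(\lambda_{-1}^{(i)}))=6g-6+\sum_{i=1}^d(2\nu_i+3)-\sum_{\substack{i=1\\\nu_i\in\mathbb{N}}}^d1,
\]
as claimed. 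Parity: each irregular unramified pole contributes $2\nu_i+3-1=2\nu_i+2$ (even); each ramified pole contributes $2\nu_i+3$ with $\nu_i\in\frac12+\mathbb{N}$, i.e. an odd integer $2\nu_i$ plus $3$, which is even; each regular pole ($\nu_i=0$) contributes $3-1=2$; and $6g-6$ is even — so the total is even, and it is $\geq0$ because the space is assumed nonempty and is a manifold. The main obstacle I expect is precisely the submersivity of $\tilde R$ at the "bad" points where $\lambda_{-1}^{(i)}\in\frac12\mathbb{Z}$ (parabolic or resonant local monodromy), since there the squaring map $\operatorname{res}_2\mapsto\lambda_{-1}$ degenerates and one cannot simply quote $R$ being a submersion; this requires the direct local deformation argument on the oper / Riccati side, working with the formal normal form coefficient $\lambda_{-1}^{(i)}$ itself rather than $\operatorname{res}_2(p_i)$, and checking it can be realized by a holomorphic variation of the ambient meromorphic projective structure keeping all other formal data fixed.
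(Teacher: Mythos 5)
Your proposal follows essentially the same route as the paper: the paper also proves the result by showing the residue map is a holomorphic submersion, computing (after lifting the local Riccati equation to a rank-$2$ system and reducing to minimal pole order) that $\pm\lambda_{-1}^{(i)}$ depends affinely, with nonzero linear coefficient, on the Taylor coefficient $q_{-(m_i+1)}$ of the polar quadratic differential, and then producing a quadratic differential realizing that directional deformation via Riemann--Roch. The only detail you gloss over is the exceptional case $(g,m)=(0,2)$, where $\dim H^0(C_t,(T^*C_t)^{\otimes2}((m+1)\cdot p))=0$ and the paper must allow an auxiliary pole $p'$ (available since $|\mathbb{M}|\geq 3$) in order to realize the required deformation.
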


\begin{proof}
	The residue map $\tilde{R}$ factors through the restriction of the covering map $M_{|\mathcal{P}^\circ(\mathbb{S},\mathbb{M})}$ (cf. \eqref{marquing-covering-map}) and the residue map (defined in a similar way) \[\tilde{\tilde{R}}:\operatorname{im}(M_{|\mathcal{P}^\circ(\mathbb{S},\mathbb{M})})\longrightarrow \mathbb{C}^d/\{\pm 1\}^d,\] so that $\tilde{R}=\tilde{\tilde{R}}\circ M_{|\mathcal{P}^\circ(\mathbb{S},\mathbb{M})}$. Since smooth covering maps are submersions, we are left to prove that $\tilde{\tilde{R}}$ is a holomorphic submersion too.
	
	Consider a point $[(t,P_0),\phi_0]\in\mathcal{P}(X/B;\mathcal{D})$, contained the domain of definition of $\tilde{\tilde{R}}$ (here, we use the notations of \autoref{section-meromorphic-projective-structures}, in particular $B=\mathcal{T}(S,d)$). In fact, we are going to show that the residue map is a submersion even if we fix the complex structure $t$. There is a unique bijection between the spaces $\mathcal{P}(X/B;\mathcal{D})_t$ and $\mathcal{Q}(X/B;\mathcal{D})_t$ sending $[(t,P_0),\phi_0]$ to $(t,0)$.

	Consider a meromorphic projective structure $[(t,P),\phi]\in\mathcal{P}(X/B;\mathcal{D})_t$ on $C_t$ in the domain of definition of $\tilde{\tilde{R}}$ (endowed with the induced topology). The idea is to find, for each coordinate function of $\tilde{\tilde{R}}$, a non identically zero quadratic differential --that is to say a non-zero vector in $\mathcal{Q}(X/B;\mathcal{D})_t$-- such that the directional derivative of $\tilde{\tilde{R}}$ at $[(t,P),\phi]$ (i.e. at the corresponding point in $\mathcal{Q}(X/B;\mathcal{D})_t$) along that vector is non-zero. We are going to use the Riemann-Roch theorem as in the proof of \cite[Lem. 6.1]{MR3349833}, but we must first clarify the relation between residues of a projective structure and the coefficients in the expansion of the corresponding quadratic differential (in particular in the unramified irregular case).

	Let $p$ be a pole of $P$, of order $n$.	It is also a pole of the associated $\operatorname{PGL}(2,\mathbb{C})$-oper. To relate its residue at $p$ with the quadratic differential $\phi$, we first apply \autoref{foliation-vs-schwarzianeq-mero} locally around $p$. Then we lift the Riccati equation a rank $2$ system. This yields the system
	\[dY+\begin{pmatrix}
	0 & -q(x)/2\\
	1 & 0
	\end{pmatrix}Ydx=0.\]
	
	Assume first that $p$ is an irregular non-ramified singularity of $P$. Then, $n\geq 3$ and $n=2m$ for some $m\in\mathbb{N}$. After the elementary gauge transformation
	\[G=\begin{pmatrix}
	1 & 0\\
	0 & x^m
	\end{pmatrix},\]
	the system's matrix becomes
	\[\begin{pmatrix}
	0 & -q(x)x^m/2\\
	x^{-m} & mx^{-1}
	\end{pmatrix},\]
	and the pole order is now minimal. In order to make the computation simpler, we consider instead the system with trace-free matrix
	\[\begin{pmatrix}
	-mx^{-1}/2 & -q(x)x^m/2\\
	x^{-m} & mx^{-1}/2
	\end{pmatrix}.\]
	The basic formal data are the eigenvalues $\lambda^{\pm}=\frac{\lambda_{-m}^\pm}{x^m}+\cdots+\frac{\lambda_{-1}^\pm}{x}$ of the negative part of this matrix (cf. the proof of \autoref{formal-normal-forms}). Therefore, they must satisfy the identity
	\begin{align*}
	(\lambda^{\pm})^2&=\frac{(\lambda_{-m}^\pm)^2}{x^{2m}}+\cdots+\frac{a^\pm\lambda_{-1}^\pm+b^\pm}{x^{m+1}}+\cdots+\frac{(\lambda_{-1}^\pm)^2}{x^2}\\
	&=\left[\frac{q(x)}{2}+\frac{m^2}{4x^2}\right]^{<0},
	\end{align*}
	where $a^\pm=2\lambda_{-m}^\pm\neq0$ (because we are in a trace-free and non-ramified situation) and $b^\pm\in\mathbb{C}$ does not depend on $\lambda_{-1}^\pm$. As a consequence, if $q_{-(m+1)}$ denotes the $(-(m+1))$-coefficient in the power series development of $q$, then $\lambda_{-1}^\pm=q_{-(m+1)}/(2a^{\pm})-b^\pm/ a^{\pm}$. Hence , \[\pm\lambda_{-1}=\pm\frac{1}{2}\left(\frac{1}{a^-}-\frac{1}{a^+}\right)q_{-(m+1)}-\pm\left(\frac{b^-}{a-}-\frac{b^+}{a^+}\right)\] and we deduce that the residue map $\tilde{\tilde{R}}$ is holomorphic. Moreover, the derivative of $\pm\lambda_{-1}$ with respect to $q_{-(m+1)}$ is non-zero.

	By virtue of the Riemann-Roch theorem (cf. formula \eqref{Riemann-Roch}), \[\dim(H^0(C_t,(T^*C_t)^{\otimes2}((m+1)\cdot p)))=3g-3+(m+1),\]
	and this dimension is positive except if $(g,m)=(0,2)$ (in this particular case, $|\mathbb{M}|\geq 3$ hence $P$ must have another pole $p'$, distinct from $p$; thus we can consider quadratic differentials with polar divisor $(m+1)\cdot p + p'$ and adapt the following argument). The subset of quadratic differentials with a pole of order exactly $(m+1)$ at $p$ is open and dense. Let $\tilde{\phi}$ be one of its non-identically vanishing elements. Then, locally around $p$, its $-(m+1)$-coefficient $\tilde{q}_{-(m+1)}$ is non zero, and the above computations show that the directional derivative at $[(t,P),\phi]$ of the residue at $p$ is non-zero. This concludes the proof.
	
	Finally, if $p$ is a regular singularity, i.e. $n=1$ or $2$, then we find $\pm\lambda_{-1}=\pm q_{-1}$ or $\pm\lambda_{-1}=\pm q_{-2}\pm 1/2$, respectively, and similar arguments apply.
\end{proof}

\subsection{A smooth moduli space of singular Riccati foliations with fixed formal data}\label{smooth-moduli-space}

\paragraph{A moduli space of meromorphic rank $2$ connections with fixed formal data.}

In his papers, Inaba introduced an alternative way of encoding the formal data of a meromorphic connection, more suitable for his construction of moduli spaces, via the notion of a \emph{parabolic connection of generic ramified type with a given exponent} \cite[Def. 2.1]{MR4545855} (see also \cite{MR4563425}). This is, roughly speaking, a meromorphic connection together with the additional data of a \emph{parabolic structure} and a collection of \emph{ramified structures}. The precise definitions are quite involved and we won't state them here. Instead, we refer the Reader to Inaba's papers and explain only the features that are relevant to our purpose.

The parabolic structure (in the sense of Inaba, which we are going to make more precise shortly) serves two different functions. First, it is part of the data encoding the formal data of the connection. Second, it is key to the smoothness of moduli spaces, via a \emph{stability} condition.

Inaba constructed, in particular, a moduli space of (parabolic and stable) rank $2$ meromorphic connection on a given smooth curve $C$, with a given polar divisor $D$ and a given formal data $\Lambda$. Note that not all formal data $\Lambda$ (in the sense of a collection of elements of $\mathbb{C}[z^{-1}]$) is the formal data of a meromorphic connection: it must satisfy the Fuchs relation (cf. \autoref{Fuchs-relation-ramified}). Another condition appears in the work of Inaba, which is needed in order for his construction to be valid: he calls it the "genericity condition" (cf. the assumption of Proposition 1.3, in \cite{MR4545855}); this condition relates to the formal data at ramified irregular singular points: the $\lambda^{\pm}$ are called "generic" if their $\frac{1}{z^{2m-2}}$-coefficient is non-zero\footnote{In fact, Inaba defines the "genericity" of a pole in general, but the condition is empty if the singularity is regular, and it is superfluous (i.e. not needed in \cite[Prop. 1.3]{MR4545855}) if the singularity is irregular unramified. Thus, we only need to consider "genericity" for irregular ramified singularities.}, where $m$ is the order of the pole. As the following lemma shows, this condition is automatically satisfied by the formal data of meromorphic connections in our setting, because we assume that they have minimal polar divisor (which is more natural when defining formal data) and a trace-free leading coefficient at each irregular ramified singularities. Thus, we won't need to talk about this "genericity" condition in what follows.

\begin{lem}\label{lemma-genericity}
	Let $dY+\Omega Y=0$ be a rank $2$ linear differential system as in \autoref{formal-normal-forms}, with a pole order $m\in\mathbb{N}^*$ at $x=0$ that is minimal. Suppose that we are in the irregular ($m\geq2$) and unramified ($A_{-m}$ is semi-simple) case, and assume moreover that the leading coefficient $A_{-m}$ is trace-free. Then, the formal data of the system are "generic" in the sense of Inaba.
\end{lem}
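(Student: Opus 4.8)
The plan is to peel off the genericity condition at the pole $x=0$ until it becomes a statement purely about the leading matrix $A_{-m}$, and then to observe that the hypotheses of the lemma force that statement.

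First I would isolate the linear-algebra core. By hypothesis $A_{-m}\neq 0$. Being in the \emph{unramified} case means precisely that $A_{-m}$ is semi-simple, i.e.\ diagonalisable over $\mathbb{C}$; write it as conjugate to $\operatorname{diag}(\mu,\mu')$. The trace-free hypothesis forces $\mu'=-\mu$, and if $\mu=0$ then $A_{-m}$ is conjugate to --- hence equal to --- the zero matrix, contradicting $A_{-m}\neq 0$. Therefore $\mu\neq 0$, so $A_{-m}$ has two distinct, both nonzero, eigenvalues $\mu,-\mu$. By the irregular unramified case of \autoref{formal-normal-forms}, these eigenvalues are exactly the leading coefficients $\lambda^{+}_{-m},\lambda^{-}_{-m}$ of the two generalised eigenvalues $\lambda^{\pm}=\lambda^{\pm}_{-m}x^{-m}+\cdots+\lambda^{\pm}_{-1}x^{-1}$ of the (unramified) formal normal form; in particular $\lambda^{+}_{-m}\neq\lambda^{-}_{-m}$.

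Next I would match this with Inaba's language. The genericity condition of \cite[Def.~2.1 and Prop.~1.3]{MR4545855}, read at the pole $x=0$ and transcribed into our formal normal forms, amounts to the non-vanishing of the top-order (in the ramification variable $z$) coefficient of $\lambda^{+}-\lambda^{-}$: at a ramified pole of order $m$ this is the $z^{-(2m-2)}$-coefficient recalled in the footnote, while at an unramified pole of order $m$ (where $z=x$) it is the $x^{-m}$-coefficient $\lambda^{+}_{-m}-\lambda^{-}_{-m}$. The computation of the previous paragraph shows that the latter equals $2\mu\neq 0$, so the system is generic in Inaba's sense.

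I expect the only point requiring real attention to be this last transcription: one must check that Inaba's genericity condition, once specialised from the ramified setting to the unramified case and rewritten in terms of the Hukuhara-Turrittin normal forms (and of our normalisation by a trace-free leading coefficient), is exactly the distinctness of $\lambda^{+}_{-m}$ and $\lambda^{-}_{-m}$. After that, nothing remains: the whole content is the elementary $2\times 2$ fact that a nonzero semi-simple trace-free matrix has two opposite nonzero eigenvalues, and it is precisely the trace-free hypothesis that excludes the single exceptional configuration --- a nonzero scalar leading coefficient --- in which the two leading coefficients would coincide.
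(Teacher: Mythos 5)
Your proof addresses the wrong case of the dichotomy, and as a result it misses the entire content of the lemma. The hypothesis ``unramified ($A_{-m}$ is semi-simple)'' in the statement is a typo for ``ramified ($A_{-m}$ is \emph{not} semi-simple)'': the footnote just before the lemma says explicitly that Inaba's genericity condition only needs to be considered at irregular \emph{ramified} singularities (it is empty or superfluous otherwise); the condition itself is phrased in terms of the $z^{-(2m-2)}$-coefficient, where $z$ is the ramification variable $x=z^{2}$, which does not exist in the unramified case (and if one sets $z=x$ there, that coefficient vanishes identically for $m>2$, so your transcription of the condition as ``the $x^{-m}$-coefficient of $\lambda^{+}-\lambda^{-}$'' is a guess that does not match the condition as the paper states it); and the paper's own proof starts from a leading coefficient $A_{-m}$ with a double eigenvalue $0$ and performs the substitution $x=z^{2}$. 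Your linear-algebra observation (a non-zero trace-free semi-simple $2\times2$ matrix has distinct eigenvalues $\pm\mu$) is correct, but it proves a statement that is not the one the lemma is actually about.

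The tell-tale sign is that your argument makes no use of the minimality of the pole order $m$, which is the essential hypothesis. In the intended (ramified) case one puts $A_{-m}$ in nilpotent upper-triangular form $\begin{pmatrix}0&b_{-m}\\0&0\end{pmatrix}$ with $b_{-m}\neq0$, and the crux is to show that the $(2,1)$-entry $c_{-m+1}$ of the coefficient of $x^{-(m-1)}$ is non-zero: if it vanished, the gauge transformation $G=\operatorname{diag}(1,x)$ would kill the order-$m$ term and contradict minimality. Only then, after the ramification $x=z^{2}$ and the shearing gauge $\operatorname{diag}(1,z)$, does one find that the $z^{-(2m-2)}$-coefficients of $\lambda^{\pm}$ are $\pm\sqrt{b_{-m}c_{-m+1}}\neq0$, which is exactly Inaba's condition. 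None of this appears in your proposal, so the gap is not a missing detail in the ``transcription'' step you flag at the end, but the absence of the argument that the lemma exists to supply.
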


\begin{proof}
	This is essentially a byproduct of the proof of \autoref{formal-normal-forms}. We are going to express the eigenvalues of $\tilde{\tilde{\Omega}}$ as functions of the entries of $\Omega$.%
	
	The leading term $A_{-m}$ of the matrix $\Omega$ admits an eigenvalue of multiplicity $2$, which is equal to $0$ since $A_{-m}$ is trace-free. After a preliminary constant gauge transformation, the leading term may be put into an upper-triangular form and we can write
	\[\Omega=\left[\frac{\begin{pmatrix}
		0&b_{-m}\\0&0
		\end{pmatrix}}{x^m}+\frac{\begin{pmatrix}
		a_{-m+1}&b_{-m+1}\\c_{-m+1}&d_{-m+1}
		\end{pmatrix}}{x^{m-1}}+\cdots\right]dx,\] with $b_{-m}\neq0$ (the points of ellipsis contain only higher order terms).
	
	If $c_{-m+1}=0$, the gauge transformation $G=\operatorname{diag}(1,x)$ kills the term of order $-m$, which contradicts our assumption that $m$ is minimal. Consequently, $c_{-m+1}\neq0$.
	
	Now, let us compute the first terms of the formal data $\lambda^{\pm}$ of this system. After the substitutions \[x=z^2\text{ followed by the elementary transformation }Y=\begin{pmatrix}
	1&0\\0&z
	\end{pmatrix}\tilde{Y}\] in the connection $d+\Omega$, we get a new connection matrix
	\begin{equation}
	\tilde{\Omega}=2\left[\frac{\begin{pmatrix}
		0&b_{-m}\\c_{-m+1}&0
		\end{pmatrix}}{z^{2(m-1)}}+\frac{\begin{pmatrix}
		a_{-m+1}&0\\0&d_{-m+1}
		\end{pmatrix}}{z^{2(m-1)-1}}+\cdots+\frac{\begin{pmatrix}
		0&b_{-1}\\c_0&1/2
		\end{pmatrix}}{z}+\cdots\right]dz,
	\end{equation}
	whose eigenvalues are $\lambda^{\pm}dz= (\pm\frac{i\sqrt{b_{-m}c_{-m+1}}}{z^{2m-2}}+\cdots)dz$. But $b_{-m}c_{-m+1}\neq0$, hence $\lambda^{\pm}$ are generic.
\end{proof}

\paragraph{Parabolic structures.}

If $p$ is a point of $C$, we denote by $\mathcal{O}_p$ the ring of germs of holomorphic functions at $p$, i.e. the stalk of the sheaf $\mathcal{O}_C$ at $p$. Let $I$ be the maximal ideal of $\mathcal{O}_p$, and denote by $\mathcal{O}_{mp}=\mathcal{O}_p/I^m$, for $m\in\mathbb{N}^*$. Concretely, in a local coordinate $x$ centered at $p$, $I=(\mathbf{x})$ ($\mathbf{x}$ is the germ of $x$ at $0$) and elements in $\mathcal{O}_{mp}$ are germs of holomorphic functions modulo $x^m$. Sticking to Inaba's notations, we denote by $mt=\operatorname{Spec}(\mathcal{O}_{mp})$ the $m$-th \emph{infinitesimal neighborhood} of $p$ in~$C$.

Let $E$ be a rank $2$ holomorphic vector bundle on $C$. The restriction $E_{|mp}$ of the sheaf $E$ to this infinitesimal neighborhood is a locally free sheaf of $\mathcal{O}_{mp}$-modules of rank $2$.

Let $\nabla$ be a meromorphic connection on $E$, with its polar divisor $D=\sum_{i=1}^d m_ip_i$, $m_i\in\mathbb{N}^*$, being minimal. Then, for all $1\leq i\leq d$, it induces a morphism \[\nabla_{|m_ip_i}:E_{|m_ip_i}\longrightarrow E_{|m_ip_i}\otimes\Omega^1_C(D),\]
Indeed, after choosing a local coordinate $x_i$ around $p_i$ as well as a trivialization coordinate of the bundle, we see that the image $\nabla s$, considered modulo $x_i^{m_i}$, of a section $s$ depends only on the $m_i$ first coefficients of the power series development of $s$ (and on the negative part of the connection matrix).

A \emph{parabolic structure} $\mathcal{L}=\{l_j^{(i)}\}^{1\leq i\leq d}_{0\leq j\leq s_i}$ (in the sense of \cite[Def. 2.1(iv)]{MR4545855}) (where $s_i=1$ or $2$) on $(E,\nabla)$, is the data, for each pole $p_i$, of a strictly descending filtration of $E_{|m_ip_i}$ by $\mathcal{O}_{m_ip_i}$-submodules \[l_0^{(i)}\supset\cdots\supset l_{s_i}^{(i)}\] of $E_{|m_ip_i}$, starting by $l_0^{(i)}=E_{|m_ip_i}$ and ending by $l_{s_i}^{(i)}=0$, such that \[\nabla_{|m_ip_i}(l^{(i)}_j)\subset l^{(i)}_j\otimes\Omega^1_C(D)\] for all $1\leq i\leq d$ and $0\leq j\leq s_i$, with all the $s_i\in\{1,2\}$ maximal among such filtrations.

In other words, if locally around $p_i$ there exists a length $1$ invariant $\mathcal{O}_{m_ip_i}$-submodule of the negative part $\nabla_{|m_ip_i}$ of the connection, then we must have $s_i=2$ and $l^{(i)}_1$ must be a length $1$ invariant submodule of $\nabla_{|m_ip_i}$. Otherwise, $s_i=1$.

\begin{rmk}\label{rmk:invariant-space-vs-eigenspace-negative-part}
	In contrast with \autoref{rmk:invariant-space-vs-eigenspace}, a $\mathcal{O}_{m_ip_i}$-submodule of $E_{|m_ip_i}$ is invariant by $\nabla_{|m_ip_i}$ if and only if it is an eigenspace of the connection matrix of $\nabla_{|m_ip_i}$.
\end{rmk}

A \emph{parabolic weight} for the \emph{parabolic connection} $(E,\nabla,\mathcal{L})$ is a choice of a rational number for each non-trivial $l_j^{(i)}$ ($1\leq j\leq s_i$), i.e. a tuple of rational numbers $\alpha=(\alpha_j^{(i)})^{1\leq i\leq d}_{1\leq j\leq s_i}$, satisfying \[\begin{cases}
0<\alpha_j^{(i)}<1\\\alpha_{j_1}^{(i)}<\alpha_{j_2}^{(i)}\text{ if }j_1<j_2,
\end{cases}\] for any $i$, and $\alpha_j^{(i)}\neq \alpha_{j'}^{(i')}$ for $(i,j)\neq (i',j')$.

\paragraph{Stability of parabolic connections.}

\begin{defi}
	Let $(E,\nabla,\mathcal{L})$, with $\mathcal{L}=\{l_j^{(i)}\}^{1\leq i\leq d}_{0\leq j\leq s_i}$, be a rank $2$ parabolic connection on $C$ with polar divisor $D$ and parabolic weight $\alpha$. The parabolic connection is said to be \emph{$\alpha$-stable} if all invariant line subbundle $F$ of $E$ satisfies the inequality
	\begin{equation}\label{alpha-stability}
	\deg(E)-2\deg(F)+\sum_{\substack{1\leq i\leq d\\F_{|m_ip_i}\not\subset l^{(i)}_1}} \mu^{(i)}-\sum_{\substack{1\leq i\leq d\\F_{|m_ip_i}\subset l^{(i)}_1}} \mu^{(i)}>0,
	\end{equation}
	where $\mu^{(i)}=\alpha^{(i)}_2-\alpha^{(i)}_1$.
\end{defi}

Note that $\mu^{(i)}$ is well-defined here, since the existence of an invariant subbundle $F$ implies $s_i=2$ for all $1\leq i\leq d$. If $s_i=1$ for some $i$, then the connection is irreducible around $p_i$, therefore there are no invariant line subbundle and the above condition is empty: the parabolic connection is thus stable with respect to any of its parabolic weights.

\paragraph{The moduli space $\mathcal{M}^{\alpha}_{C,D,\Lambda}$.} Let $C$ be a compact smooth curve, $D$ an effective divisor on $C$ and $\Lambda$ a "generic" formal data (compatible with $D$). The moduli space $\mathcal{M}^{\alpha}_{C,D,\Lambda}$ of $\alpha$-stable parabolic meromorphic rank $2$ connections on $C$ with polar divisor $D$ minimal and formal data $\Lambda$ was constructed by Inaba (note that the degree of the vector bundles on which the connections live is fixed by $\Lambda$ and the Fuchs relation).

\begin{thm}[cf. Theorem 3.1 of \cite{MR4545855}]\label{Inaba-thm}
	There is a structure of a smooth complex manifold on $\mathcal{M}^{\alpha}_{C,D,\Lambda}$, of dimension \[\operatorname{dim}(\mathcal{M}^{\alpha}_{C,D,\Lambda})=2(4(g-1)+1+\deg(D))\] if non-empty.
	
	Moreover, there exists a smooth manifold $\mathcal{M}'$, a local homeomorphism
	\[\mathcal{M}'\longrightarrow \mathcal{M}^{\alpha}_{C,D,\Lambda}\]
	and a universal family of parabolic meromorphic connections on $\mathcal{M}'$.

\end{thm}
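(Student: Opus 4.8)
The plan is to obtain the statement by specialising Inaba's Theorem~3.1 of \cite{MR4545855}; the substantive work is to match our bookkeeping of local data to his. First I would fix a \emph{generic} parabolic weight $\alpha$, meaning that the $\alpha_j^{(i)}$ are pairwise distinct, lie in $(0,1)$, and are generic with respect to the numerical data so that $\alpha$-stability and $\alpha$-semistability coincide; such an $\alpha$ always exists. Then I would set up the dictionary between the datum $(D,\Lambda)$ with $D$ \emph{minimal} and Inaba's notion of a parabolic connection of generic ramified type with a prescribed exponent. At a regular or irregular unramified pole $p_i$ the exponent is read off from the diagonal of the Hukuhara--Turrittin normal form, and the parabolic structure records the length $1$ invariant $\mathcal{O}_{m_ip_i}$-submodule singled out by the ordering of $\lambda^{+,(i)}$ versus $\lambda^{-,(i)}$ (cf.\ \autoref{rmk:invariant-space-vs-eigenspace-negative-part}), so that $s_i=2$; at a ramified irregular pole one uses the ramified normal form of \autoref{formal-normal-forms} together with its relation $\lambda^{\pm}=2z(\alpha\pm z\beta)$, and here $s_i=1$. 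The numerical compatibility of $\Lambda$ with $D$ required by Inaba is exactly the Fuchs relation (\autoref{Fuchs-relation-ramified}), which simultaneously fixes $\deg E$.

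Once this translation is in place, Inaba's construction applies under the standing genericity hypothesis on $\Lambda$ (the hypothesis whose automatic validity for the opers occurring later is the content of \autoref{lemma-genericity}): \cite[Theorem 3.1]{MR4545855} endows $\mathcal{M}^{\alpha}_{C,D,\Lambda}$ with the structure of a smooth quasi-projective, hence complex-analytic, manifold, and produces a smooth manifold $\mathcal{M}'$ together with a (finite) local homeomorphism $\mathcal{M}'\to\mathcal{M}^{\alpha}_{C,D,\Lambda}$ rigidifying the scalar automorphisms of stable parabolic connections, carrying a universal family of parabolic meromorphic connections. Smoothness is where $\alpha$-stability enters: the trace-free deformation complex of an $\alpha$-stable parabolic connection has no infinitesimal automorphisms, so by Serre duality it has no obstructions, and the moduli space is smooth at that point.

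For the dimension I would not redo Inaba's computation but rewrite his formula. Deforming the pair $(E,\nabla)$ away from the poles contributes $2n^{2}(g-1)$ with $n=2$, that is $8(g-1)$; the overall trace/scalar normalisation contributes $+2$; and, with the full formal data fixed, each pole $p_i$ of order $m_i$ contributes $2m_i$ (the local Riemann--Roch count for $\mathrm{End}(E)$-valued $1$-forms with the prescribed polar behaviour, equal to $2$ in the Fuchsian case and increasing by $2$ for each unit increase of $m_i$). Summing over the poles gives $\dim\mathcal{M}^{\alpha}_{C,D,\Lambda}=8(g-1)+2+2\sum_i m_i=2\bigl(4(g-1)+1+\deg D\bigr)$, as claimed; in particular the dimension is even.

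The main obstacle here is not depth but precision: several equivalent packagings of the local formal data are in play --- basic formal data, the formal normal forms of \autoref{formal-normal-forms}, Hukuhara--Turrittin normal forms, and Inaba's exponents together with parabolic and ramified structures --- and the argument reduces to checking that, under our conventions (minimal polar divisor, and, in the application to opers, trace-free leading coefficient at ramified poles), these packagings correspond and the minimality and genericity requirements of \cite{MR4545855} are met. With \autoref{lemma-genericity} and \autoref{Fuchs-relation-ramified} in hand, the statement is then exactly Inaba's Theorem~3.1.
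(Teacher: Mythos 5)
The paper offers no proof of this statement beyond the citation of Inaba's Theorem~3.1 in \cite{MR4545855}, and your proposal does exactly what that citation implicitly requires: translate $(D,\Lambda)$ with minimal polar divisor into Inaba's parabolic connections of generic ramified type, verify the Fuchs and genericity hypotheses, and invoke his theorem, with a dimension count that is an arithmetically correct rewriting of his formula. This is essentially the same approach as the paper, only with the bookkeeping made explicit; your extra requirement that $\alpha$ be generic enough for stability and semistability to coincide is harmless but not needed for the statement as given.
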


\begin{rmk}
	Such a moduli space was studied, in the case of rank $2$ connections on $C=\mathbb{P}^1$, by Diarra and Loray in \cite{MR4423484}. See also the earlier work by Inaba and Saito regarding unramified meromorphic connections \cite{MR3079310} and the subsequent paper \cite{MR4563425} dealing with generalized isomonodromic deformations.
\end{rmk}

\paragraph{Parabolic structures for connections without projectively apparent singularity.} Assume that $(E,\nabla)$ has no \emph{projectively apparent singularity} (i.e. no simple pole with scalar local monodromy). Then, if its formal data $\Lambda$ are fixed (in particular, if we have chosen an ordering of the diagonal elements in its Hukahara-Turrittin ramified formal normal form), there is a canonical parabolic structure on  $(E,\nabla)$. It is determined as follows.

Consider a pole $p_i$. Locally around $p_i$, by \autoref{rmk:invariant-space-vs-eigenspace-negative-part}, any one-dimensional (formal) eigenspace of the negative part of a connection matrix around $p_i$ in a formal normal form $\tilde{\Omega}$ of \autoref{formal-normal-forms} induces a length~$1$ invariant submodule of the negative part $\nabla_{|m_ip_i}$ of $\nabla$.

Suppose first that $p_i$ is a regular singularity, and that $\tilde{\Omega}=\operatorname{diag}(\lambda^+,\lambda^-)dx$ and $\lambda^+=\lambda^-$. In this case, all subspaces are invariant by $\tilde{\Omega}$, hence there is no canonical choice of an eigenspace. But $\nabla$ must have projectively trivial local monodromy around $p_i$, hence this does not occur by assumption.

In all other cases when $p_i$ is a regular or irregular unramified singularity, there is a unique formal eigenspace of the negative part of $\tilde{\Omega}$ associated to $\lambda^+$. Thus, we have $s_i=2$ and we canonically define $l^{(i)}_1$ as the induced length $1$ invariant submodule of $\nabla_{|m_ip_i}$.

Finally, if $p_i$ is an irregular ramified singularity, then there are no length $1$ invariant submodule\footnote{This is a property of ramified singularity.} of $\nabla_{|m_ip_i}$, hence $s_i=1$ and the filtration $l^{(i)}_j$ must be the trivial one: $l_0^{(i)}=E_{|m_ip_i}\supset l_1^{(i)}=0$.

\paragraph{Opers correspond to $\alpha$-stable connections.} We are now going to show that (if we exclude special values of $(g,\deg(D))$) the connections associated with $\operatorname{GL}(2,\mathbb{C})$-opers are always $\alpha$-stable, for some parabolic weight $\alpha$. Furthermore, if we restrict ourselves to opers with a fixed formal data, then it is possible to find a common parabolic weight $\alpha$ with respect to which they are \emph{all} $\alpha$-stable.

\begin{lem}\label{lem-stability}
	Let $C$ be a genus $g$ curve, $D$ be an effective divisor on this curve and $\Lambda$ be a formal data adapted to $D$. Assume that $2-2g-\deg(D)<0$. Then, there exists a parabolic weight $\alpha$ adapted to $\Lambda$ such that: if $(E,\nabla,\mathcal{L})$ is a  parabolic rank $2$ meromorphic connections on $C$ with polar divisor $D$, formal data $\Lambda$ and admitting a subbundle $L\subset E$ such that $(E,\nabla,L)$ is a $\operatorname{GL}(2,\mathbb{C})$-opers over $C$, then $(E,\nabla,\mathcal{L})$ is $\alpha$-stable.
\end{lem}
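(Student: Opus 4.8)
The plan is to exhibit an explicit parabolic weight $\alpha$ that works uniformly for all opers with the given formal data, by exploiting the self-intersection formula \eqref{self-intersection-number} to control the degree of any invariant subbundle. First I would observe that since $(E,\nabla,L)$ is a $\operatorname{GL}(2,\mathbb{C})$-oper, the line subbundle $L$ is nowhere invariant by $\nabla$ (the strictness condition), so $L$ cannot be the subbundle $F$ appearing in the $\alpha$-stability inequality \eqref{alpha-stability}: that inequality only needs to be checked for \emph{invariant} line subbundles $F\subset E$. The key geometric input is that the section $\sigma$ of $\mathbb{P}(E)$ associated with $L$ is transverse to the Riccati foliation $\mathbb{P}(\nabla)$, so by \eqref{self-intersection-number} we have $\sigma\cdot\sigma=2-2g-\deg(D)<0$ by hypothesis. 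By \autoref{P1-bundle-facts}, $\sigma$ is then the \emph{unique} minimal section of $Q=\mathbb{P}(E)$ and the only one with negative self-intersection; hence any invariant line subbundle $F$ (which gives a section $\sigma_F\ne\sigma$, as $\sigma$ is not invariant) satisfies $\sigma_F\cdot\sigma_F\geq -\,(2-2g-\deg(D))>0$, i.e.\ using $\sigma_F\cdot\sigma_F=\deg(E)-2\deg(F)$ we get
\begin{equation*}
\deg(E)-2\deg(F)\geq -(2-2g-\deg(D))=2g-2+\deg(D)>0.
\end{equation*}

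Next I would bound the parabolic correction terms. In the stability inequality \eqref{alpha-stability}, the two sums range over the poles $p_i$ with $s_i=2$, and each contributes $\pm\mu^{(i)}$ with $\mu^{(i)}=\alpha_2^{(i)}-\alpha_1^{(i)}\in(0,1)$; the number of such poles is at most $d=\#\{p_i\}$, so
\begin{equation*}
\Bigl|\sum_{F_{|m_ip_i}\not\subset l_1^{(i)}}\mu^{(i)}-\sum_{F_{|m_ip_i}\subset l_1^{(i)}}\mu^{(i)}\Bigr|\;\leq\;\sum_{i=1}^d\mu^{(i)}.
\end{equation*}
Therefore it suffices to choose the parabolic weight $\alpha$ (adapted to $\Lambda$, i.e.\ compatible with the canonical parabolic structure described in the paragraph ``Parabolic structures for connections without projectively apparent singularity'') so that $\sum_{i=1}^d\mu^{(i)}< 2g-2+\deg(D)$, which is possible because $2g-2+\deg(D)\geq 1$ is a fixed positive integer while the $\mu^{(i)}$ can be taken arbitrarily small and the weights still satisfy the required genericity/ordering constraints $0<\alpha_1^{(i)}<\alpha_2^{(i)}<1$ and $\alpha_j^{(i)}\neq\alpha_{j'}^{(i')}$. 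With such a choice,
\begin{equation*}
\deg(E)-2\deg(F)+\sum_{F_{|m_ip_i}\not\subset l_1^{(i)}}\mu^{(i)}-\sum_{F_{|m_ip_i}\subset l_1^{(i)}}\mu^{(i)}\;\geq\;(2g-2+\deg(D))-\sum_{i=1}^d\mu^{(i)}\;>\;0
\end{equation*}
for every invariant line subbundle $F$, which is exactly $\alpha$-stability. One subtlety to address is that $(E,\nabla,L)$ being an oper does not a priori exclude ``projectively apparent singularities'' at simple poles; but since $L$ is nowhere invariant, there is no line subbundle equal to $L$ causing trouble, and I would note that the canonical parabolic structure used to define the admissible weights $\alpha$ is the one attached to the fixed ordering in the Hukuhara--Turrittin normal form, which is well defined whenever we have fixed $\Lambda$.

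The main obstacle I anticipate is purely bookkeeping rather than conceptual: one must make sure the chosen weight $\alpha$ is simultaneously (i) small enough in the $\mu^{(i)}$ to beat the integer $2g-2+\deg(D)$, (ii) compatible with the canonical parabolic structure at each pole (so that at ramified poles where $s_i=1$ there is no weight to choose, and at unramified/regular poles the ordering $\alpha_1^{(i)}<\alpha_2^{(i)}$ respects the filtration associated with $\lambda^+$), and (iii) ``generic'' in the sense that all the $\alpha_j^{(i)}$ are pairwise distinct, as required in the definition of a parabolic weight. Since these are all open or small-perturbation conditions and $2g-2+\deg(D)$ is bounded below by $1$, such an $\alpha$ exists; the only real care is to check that the genericity perturbations can be made small enough not to violate the strict inequality, which is immediate. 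Finally, I would record that this $\alpha$ depends only on $(C,D,\Lambda)$ and not on the particular oper, as the statement requires, since the bound $2g-2+\deg(D)$ and the combinatorics of the poles depend only on those data.
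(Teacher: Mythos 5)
Your proposal is correct and follows essentially the same route as the paper: both use the self-intersection formula $\sigma\cdot\sigma=2-2g-\deg(D)<0$ together with the uniqueness of the negative self-intersection section to get a uniform positive lower bound on $\deg(E)-2\deg(F)$ for every invariant line subbundle $F$, and then choose the parabolic weights with $\sum_i\mu^{(i)}$ small enough. Your write-up merely makes explicit the bookkeeping on the $\mu^{(i)}$ that the paper summarizes with ``we readily see''.
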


\begin{proof}
	Let  $(E,\nabla,\mathcal{L})$ be a parabolic meromorphic connections on $C$ with polar divisor $D$ and formal data $\Lambda$. Note that the form of a parabolic weight $\alpha=(\alpha_j^{(i)})^{1\leq i\leq d}_{1\leq j\leq s_i}$ associated to this parabolic connection, i.e. the values of $d$ and the $s_i$, only depends on $\Lambda$.
	
	If $(E,\nabla)$ is irreducible (for example, if it admits a ramified irregular singularity), then it is stable with respect to any parabolic weight.
	
	Suppose $(E,\nabla)$ is reducible, and let $F$ be a line subbundle of $E$, invariant by $\nabla$. By assumption, there exists a line subbundle $L$ such that $(E,\nabla,L)$ is a $\operatorname{GL}(2,\mathbb{C})$-oper on $C$. Denote by $\sigma$ the associated section on the $\mathbb{P}^1$-bundle $\mathbb{P}(E)$.
	Our assumptions on $g$ and $\deg(D)$, together with the transversality of $\sigma$ with respect to the foliation $\mathbb{P}(\nabla)$ imply by equation \eqref{self-intersection-number} that $\sigma$ has negative self-intersection number. Further, we know that it is the unique section with negative self-intersection number.	Because $F$ is invariant by the connection, it is distinct from $L$. Hence  $\deg(E)-2\deg(F)\geq -\sigma\cdot\sigma>0$ (cf. \autoref{P1-bundle-facts}). Since the self-intersection number is an integer, we necessarily have $\deg(E)-2\deg(F)\geq1$. But this depend neither on the parabolic connection $(E,\nabla,\mathcal{L})$, nor on the invariant subbundle $F$. As a consequence, we readily see that it is possible to chose a parabolic weight $\alpha$ adapted to $\Lambda$ in such a way that the condition (\ref{alpha-stability}) is always fulfilled, hence the conclusion.
\end{proof}

\paragraph{Connections without projectively apparent singularity.} We denote by
\[\mathcal{M}^{\alpha,\circ}_{C,D,\Lambda}\] the subset of parabolic connections with no projectively apparent singularity. It is an open subset of $\mathcal{M}^{\alpha}_{C,D,\Lambda}$.

As we have observed previously, a parabolic connection $(E,\nabla,\mathcal{L})$ without apparent singularity is entirely determined by the subjacent meromorphic connection $(E,\nabla)$ and the formal data $\Lambda$. Hence, the parabolic structure is superfluous, in a sense, and from now on we will refer to elements of this space as meromorphic connections (keeping in mind that the $\alpha$-stability is defined with respect to the parabolic structure).\\

We wish now to construct smooth moduli spaces of singular Riccati foliations containing the foliations subjacent to $\operatorname{PGL}(2,\mathbb{C})$-opers associated to elements in the moduli spaces $\mathcal{P}^\circ(\mathbb{S},\mathbb{M},(\lambda_{-1}^{(i)}))$ of equivalence classes of marked meromorphic projective structures on $S$, with $d$ poles of orders $(n_i)$ and (signed) residues $(\lambda_{-1}^{(i)})$.

\paragraph{Connections with a fixed trace.} We are now going to study the subspaces of $\mathcal{M}^{\alpha,\circ}_{C,D,\Lambda}$ consisting of connection with a specified trace, equal to $(\mathcal{O},d)$ or $(\mathcal{O}(p_i),\zeta)$ for some pole $p_i$. According to \autoref{sec:proj-struct-GL-opers}, we know that, depending on the parity of its polar divisor, a $\operatorname{PGL}(2,\mathbb{C})$-oper can be lifted to such a connection. Once the trace is fixed, a moduli space of Riccati foliations is obtained as the quotient by a discrete group.

Let us denote by $\Gamma$ the moduli space of holomorphic rank $1$ connections on $C$, and $\Gamma_1$ the moduli space of rank $1$ meromorphic connections with a single simple pole having residue $-1$.

Recall from \autoref{trace-free-principal-part} that the negative parts of the traces of the connection matrices are equal to the traces of the corresponding normal forms of \autoref{formal-normal-forms}. Chose a formal data $\Lambda$ such that an element of $\mathcal{M}^{\alpha,\circ}_{C,D,\Lambda}$ has holomorphic trace\footnote{Note that without changing the projective equivalence class of $\Lambda$ we can make such a choice. In other words, given a formal data $\bar{\Lambda}$, we can always find a $\Lambda$ such that its projectivization is $\bar{\Lambda}$ and its trace is such that $T_0$ is well-defined. The same is true in the case of $T_1$ bellow.}. Then, the trace map
\begin{align*}
T_0:\mathcal{M}^{\alpha,\circ}_{C,D,\Lambda}&\longrightarrow\Gamma\\
(E,\nabla)&\longmapsto (\det(E),\operatorname{tr}(\nabla))
\end{align*}
is well-defined. It is holomorphic.

The space $\Gamma$ has a group structure and acts holomorphically on both the source and the target of the trace map $T_0$ in an equivariant way: by $\cdot\otimes(L,\nabla)$ and $\cdot\otimes(L,\nabla)^{\otimes2}$, respectively. The action of $\Gamma$ on itself by $\cdot\otimes(L,\nabla)^{\otimes2}$ is transitive, so that $T_0$ is surjective.

\begin{lem}\label{lem:fix-trace}
	The trace map $T_0$ is a surjective holomorphic submersion. In particular, the subset \[\mathcal{M}^{\alpha,\circ,0}_{C,D,\Lambda}:=T_0^{-1}(\mathcal{O}_C,d)\] of trace-free connections is a smooth submanifold of $\mathcal{M}^{\alpha,\circ}_{C,D,\Lambda}$ of dimension \[6g-6+2\deg(D).\]
\end{lem}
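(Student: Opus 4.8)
The plan is to exploit the equivariance of $T_0$ with respect to the two actions of the group $\Gamma$ of holomorphic rank $1$ connections, combined with the fact that $\Gamma$ is itself a smooth complex manifold (a torsor under $H^1(C,\mathcal{O}_C)\times\operatorname{Hom}(\pi_1(C),\mathbb{C}^*)$, or equivalently the generalized Jacobian attached to the Riemann-Hilbert description), of complex dimension $g$. First I would record that the action of $\Gamma$ on itself by $(L,\nabla_L)\mapsto\cdot\otimes(L,\nabla_L)^{\otimes 2}$ is transitive: squaring is surjective on $\Gamma$ because $\Gamma$ is a divisible group (over $\mathbb{C}$ the Picard-type group and the character group are both divisible), so the orbit is all of $\Gamma$ and $T_0$ is surjective. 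Next, since $T_0$ is $\Gamma$-equivariant and the action on the target is transitive, to check that $T_0$ is a submersion it suffices to check surjectivity of the differential at a single point in each fibre; equivalently, it suffices to see that the orbit map $\Gamma\to\mathcal{M}^{\alpha,\circ}_{C,D,\Lambda}$, $(L,\nabla_L)\mapsto (E,\nabla)\otimes(L,\nabla_L)$, composed with $T_0$, has surjective differential — but that composite is exactly the squaring map $\Gamma\to\Gamma$, whose differential is multiplication by $2$ on the tangent space $H^1(C,\mathcal{O}_C)\oplus\mathbb{C}^g$ (or, more intrinsically, on $T_{(L,\nabla_L)}\Gamma$), hence an isomorphism. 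Therefore $dT_0$ is surjective everywhere and $T_0$ is a holomorphic submersion.

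Granting this, $\mathcal{M}^{\alpha,\circ,0}_{C,D,\Lambda}=T_0^{-1}(\mathcal{O}_C,d)$ is a smooth submanifold, being the preimage of a point under a submersion between complex manifolds (the regular value theorem in the holomorphic category; here one uses that $(\mathcal{O}_C,d)$ is a genuine point of $\Gamma$, and that the formal data $\Lambda$ has been chosen, as in the footnote, so that trace-free connections do occur, i.e. the fibre is non-empty). The codimension equals $\dim_{\mathbb{C}}\Gamma=g$, so
\begin{align*}
\dim\mathcal{M}^{\alpha,\circ,0}_{C,D,\Lambda}
&=\dim\mathcal{M}^{\alpha,\circ}_{C,D,\Lambda}-g\\
&=2\bigl(4(g-1)+1+\deg(D)\bigr)-g\\
&=8g-8+2+2\deg(D)-g\\
&=7g-6+2\deg(D).
\end{align*}
This does not match the claimed $6g-6+2\deg(D)$, so the codimension must actually be $2g$, not $g$: the correct statement is that $\Gamma$ has dimension $g$ as a complex manifold only if one counts the de Rham / Picard part, but the moduli space of rank $1$ \emph{connections} is an affine bundle over $\operatorname{Pic}^0(C)$ with fibre $H^0(C,\Omega^1_C)\cong\mathbb{C}^g$, hence $\dim_{\mathbb{C}}\Gamma=2g$. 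With codimension $2g$ one gets $8g-8+2+2\deg(D)-2g=6g-6+2\deg(D)$, as stated.

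So the argument I would actually write is: (1) identify $\Gamma$ with the moduli space of rank $1$ meromorphic connections that are holomorphic, note it is a smooth complex manifold of dimension $2g$ (an $H^0(C,\Omega^1_C)$-torsor over $\operatorname{Pic}^0(C)$, or via Riemann-Hilbert the torus $\operatorname{Hom}(\pi_1(C),\mathbb{C}^*)\cong(\mathbb{C}^*)^{2g}$ of dimension $2g$); (2) observe the squaring endomorphism $s:\Gamma\to\Gamma$, $(L,\nabla_L)\mapsto(L,\nabla_L)^{\otimes 2}$, is surjective with everywhere-invertible differential (multiplication by $2$ on each tangent space, since $\Gamma$ is a complex Lie group) — in particular $s$ is an étale surjection; (3) deduce from the equivariance $T_0\bigl((E,\nabla)\otimes(L,\nabla_L)\bigr)=T_0(E,\nabla)\otimes(L,\nabla_L)^{\otimes 2}$ and transitivity of the target action that $T_0$ is surjective and that its differential is surjective at every point (composing with the orbit map through any point of a given fibre realizes $s$, whose differential is onto); (4) conclude by the holomorphic regular value theorem that $\mathcal{M}^{\alpha,\circ,0}_{C,D,\Lambda}$ is a smooth submanifold of codimension $2g$ and compute the dimension using \autoref{Inaba-thm}. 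The main obstacle I anticipate is purely bookkeeping: one must be careful that ``moduli space of rank $1$ connections'' means connections (an affine space modelled on $H^0(\Omega^1_C)$ over each point of the Jacobian), giving dimension $2g$, rather than ``rank $1$ connections up to nothing'' or just line bundles, which would give the wrong $g$; getting this right is exactly what makes the dimension count come out to $6g-6+2\deg(D)$. A secondary point to handle cleanly is non-emptiness of the trace-free fibre, which is guaranteed by the choice of $\Lambda$ flagged in the footnote (one can always adjust $\Lambda$ within its projectivization class so that $(\mathcal{O}_C,d)$ lies in the image of $T_0$).
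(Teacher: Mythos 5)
Your proof is correct and follows the same overall strategy as the paper --- exploiting the equivariance of $T_0$ under the twist action of $\Gamma$ --- but the key submersion step is carried out differently. The paper argues by contradiction with Sard's theorem: since the $\Gamma$-action permutes the fibres of $T_0$ and acts transitively on the target, a single critical value would force \emph{every} value to be critical, which is impossible for a holomorphic map. You instead compute the differential directly: composing the orbit map $\Gamma\to\mathcal{M}^{\alpha,\circ}_{C,D,\Lambda}$ through any point of a given fibre with $T_0$ yields (up to translation) the squaring map of the abelian complex Lie group $\Gamma$, which is \'etale, so $dT_0$ is surjective everywhere. Your route is more constructive and avoids Sard; the paper's is shorter given that holomorphy has already been established. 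Both then conclude by the regular value theorem, and both reduce surjectivity of $T_0$ to divisibility of $\Gamma$.

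Two bookkeeping remarks. First, your proposal contains a false start: you initially assert $\dim_{\mathbb{C}}\Gamma=g$ (and the parenthetical ``torsor under $H^1(C,\mathcal{O}_C)\times\operatorname{Hom}(\pi_1(C),\mathbb{C}^*)$'' would even suggest $3g$), derive $7g-6+2\deg(D)$, and only then correct to the right description --- an $H^0(C,\Omega^1_C)$-affine bundle over $\operatorname{Pic}^0(C)$, or via Riemann--Hilbert $\operatorname{Hom}(\pi_1(C),\mathbb{C}^*)\cong(\mathbb{C}^*)^{2g}$ --- giving $\dim_{\mathbb{C}}\Gamma=2g$ and the stated $6g-6+2\deg(D)$. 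The final count agrees with the paper, which gets $\dim(\Gamma)=2(g-1)+2=2g$ from Inaba's rank-one formula; in a clean write-up you should state $2g$ from the outset. Second, your orbit-map argument tacitly uses that twisting by an element of $\Gamma$ preserves $\mathcal{M}^{\alpha,\circ}_{C,D,\Lambda}$ (same polar divisor, formal data $\Lambda$ and stability); this holds because the twist only adds a holomorphic scalar form to the connection matrices, and is asserted in the paper just before the lemma, but it deserves a sentence in your proof.
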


\begin{proof}
	If the trace map had a critical value in $\Gamma$, then all the elements of this critical value's orbit under the above action of $\Gamma$ on itself would be critical values as well, since the action of $\Gamma$ on $\mathcal{M}^{\alpha,\circ}_{C,D,\Lambda}$ induces isomorphisms between fibers. But the action of $\Gamma$ on itself is transitive (which, by the way, implies that $T_0$ is surjective) and $T_0$ is holomorphic, hence this would contradict Sard's theorem.
	
	The value of $\dim(\mathcal{M}^{\alpha,\circ,0}_{C,D,\Lambda})=\dim(\mathcal{M}^{\alpha,\circ}_{C,D,\Lambda})-\dim(\Gamma)$ is given by \cite[Thm. 3.1]{MR4545855} (Inaba's theorem applied in the rank $2$ case gives $\operatorname{dim}(\mathcal{M}^{\alpha,\circ}_{C,D,\Lambda})=2(4(g-1)+1+\deg(D))$ as in \autoref{Inaba-thm}, and in the rank $1$ case it gives $\dim(\Gamma)=2(g-1)+2$).
\end{proof}

A similar argument shows that, if $\Lambda$ is such that
\begin{align*}
T_1:\mathcal{M}^{\alpha,\circ}_{C,D,\Lambda}&\longrightarrow\Gamma_1\\
(E,\nabla)&\longmapsto (\det(E),\operatorname{tr}(\nabla))
\end{align*}
is well-defined, then $\mathcal{M}^{\alpha,\circ,1}_{C,D,\Lambda}:=T_1^{-1}(\mathcal{O}_C(p_i),\zeta)$ is a smooth submanifold of $\mathcal{M}^{\alpha,\circ}_{C,D,\Lambda}$ (the twist action of $\Gamma$ on $\Gamma_1$ is transitive). The dimensions are the same as before.

Note that the admissible $\Lambda$ in both cases must in particular satisfy the hypothesis of \autoref{lemma-genericity}, hence they are "generic".

\paragraph{A moduli space of singular Riccati foliations.} Let $\Gamma'$ be the subgroup of square roots of the rank $1$ trivial connection $(\mathcal{O}_C,d)$ in $\Gamma$. It is a discrete Lie group of cardinal $2^{2g}$, acting holomorphically on $\mathcal{M}^{\alpha,\circ,0}_{C,D,\Lambda}$ and $\mathcal{M}^{\alpha,\circ,1}_{C,D,\Lambda}$ by the twist operation. The complements $\mathcal{M}^{\alpha,\circ,0,*}_{C,D,\Lambda}$ and $\mathcal{M}^{\alpha,\circ,1,*}_{C,D,\Lambda}$ of fixed points (a fixed point is a point with a non-trivial stabilizer) are therefore invariant open subsets. In addition, the action of $\Gamma'$ on those spaces is proper for it is a finite group, so that the quotient spaces $\mathcal{M}^{\alpha,\circ,0,*}_{C,D,\Lambda}/\Gamma'$ and $\mathcal{M}^{\alpha,\circ,1,*}_{C,D,\Lambda}/\Gamma'$ have unique structures of smooth manifolds such that the quotient maps are local diffeomorphisms.

Let $(E,\nabla)$ and $(E',\nabla')$ be two elements in $\mathcal{M}^{\alpha,\circ,0,*}_{C,D,\Lambda}$ or $\mathcal{M}^{\alpha,\circ,1,*}_{C,D,\Lambda}$. Then,
\[\mathbb{P}(E,\nabla)=\mathbb{P}(E',\nabla') ~\Leftrightarrow~  (E,\nabla)=(E',\nabla')~~ \operatorname{mod} ~~\Gamma'.\]
For this reason, to an element in $\mathcal{M}^{\alpha,\circ,0,*}_{C,D,\Lambda}/\Gamma'$ or $\mathcal{M}^{\alpha,\circ,1,*}_{C,D,\Lambda}/\Gamma'$ corresponds a unique singular Riccati foliation $(Q,\mathcal{F})$ (with no apparent singularity) on $C$, with polar divisor $D$ and formal data $\bar{\Lambda}$. We identify those quotients to the corresponding spaces of singular Riccati equations and denote\footnote{Beware that the letter $\Lambda$ does not refer to the same element in both cases: the set of admissible formal data depends on the chosen trace.}
\[\mathfrak{Ricc}^{\alpha,\circ,0,*}_{C,D,\bar{\Lambda}}\leftrightarrow\mathcal{M}^{\alpha,\circ,0,*}_{C,D,\Lambda}/\Gamma'\text{~~ and ~~}\mathfrak{Ricc}^{\alpha,\circ,1,*}_{C,D,\bar{\Lambda}}\leftrightarrow\mathcal{M}^{\alpha,\circ,1,*}_{C,D,\Lambda}/\Gamma'.\]
This is well-defined if $\bar{\Lambda}$ is the projectivization of $\Lambda$. Indeed, $\Lambda$ is then uniquely determined by $\bar{\Lambda}$ and the negative part of the trace of a connection matrix.

The latter two moduli spaces contain, roughly speaking, foliations induced by $\operatorname{PGL}(2,\mathbb{C})$-opers on $C$. This is the meaning of the following lemma.

\begin{lem}\label{lem:opers-are-not-fixed}
	Assume that $2-2g-\deg(D)<0$. Let $(E,\nabla,\mathcal{L})\in\mathcal{M}^{\alpha,\circ,0}_{C,D,\Lambda}$ (resp. $\in\mathcal{M}^{\alpha,\circ,1}_{C,D,\Lambda}$) be a parabolic rank $2$ meromorphic connection admitting a subbundle $L\subset E$ such that $(E,\nabla,L)$ is a $\operatorname{GL}(2,\mathbb{C})$-oper over $C$. Then, $(E,\nabla,\mathcal{L})$ is not a fixed point for the action of $\Gamma'$ on $\mathcal{M}^{\alpha,\circ,0}_{C,D,\Lambda}$ (resp. $\mathcal{M}^{\alpha,\circ,1}_{C,D,\Lambda}$).
\end{lem}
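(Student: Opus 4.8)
The plan is to argue by contradiction using the obstruction we already built into the geometry of opers, namely the uniqueness of the section of negative self‑intersection. Suppose $(E,\nabla,\mathcal{L})$ carries a transverse line subbundle $L$ making $(E,\nabla,L)$ a $\operatorname{GL}(2,\mathbb{C})$-oper, and suppose it is fixed by some non‑trivial $(N,\nabla_N)\in\Gamma'$, i.e. there is an isomorphism $\psi:(E,\nabla)\xrightarrow{\ \sim\ }(E,\nabla)\otimes(N,\nabla_N)$ compatible with the parabolic structure. Since $(N,\nabla_N)$ is a holomorphic rank $1$ connection with $(N,\nabla_N)^{\otimes2}\simeq(\mathcal{O}_C,d)$, we have $\deg N=0$ and $N\not\simeq\mathcal{O}_C$ (a non‑trivial square root of the trivial connection has non‑trivial line bundle, as it corresponds to a non‑trivial element of $\operatorname{Hom}(\pi_1(C),\{\pm1\})$ under Riemann–Hilbert). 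Passing to projectivizations, $\psi$ induces a bundle automorphism $\bar\psi$ of $Q=\mathbb{P}(E)$ over $C$ preserving the Riccati foliation $\mathcal{F}=\mathbb{P}(\nabla)$.

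The key step is then to exploit uniqueness of the transverse section. Under the running hypothesis $2-2g-\deg(D)<0$, the self‑intersection formula \eqref{self-intersection-number} gives $\sigma\cdot\sigma<0$, and by \autoref{P1-bundle-facts} the section $\sigma$ associated with $L$ is the \emph{unique} section of $Q$ with negative self‑intersection. The automorphism $\bar\psi$ sends $\sigma$ to another section $\bar\psi(\sigma)$ with the same self‑intersection number, hence $\bar\psi(\sigma)=\sigma$; equivalently $\psi(L)=L\otimes N$ as subbundles of $E\otimes N$. But $\bar\psi$ also preserves $\mathcal{F}$, so it sends the (everywhere transverse, by the oper condition) section $\sigma$ to the transverse section $\sigma$ — this is automatic and not yet a contradiction. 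The contradiction comes from comparing line subbundles: on the one hand $\psi$ restricts to an isomorphism $L\xrightarrow{\sim}L\otimes N$, whence $N\simeq\mathcal{O}_C$; on the other hand $N\not\simeq\mathcal{O}_C$ because $(N,\nabla_N)$ is a non‑trivial element of $\Gamma'$. This is the desired contradiction, so no non‑trivial element of $\Gamma'$ can stabilize $(E,\nabla,\mathcal{L})$.

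A small amount of care is needed at two points, and I expect the main (though modest) obstacle to be checking that the fixed‑point condition really produces an honest vector‑bundle isomorphism $\psi$ of the asserted form, compatible with $\mathcal{L}$, rather than merely an isomorphism of projectivizations: this is where one uses that fixed points of the $\Gamma'$‑action on $\mathcal{M}^{\alpha,\circ,0}_{C,D,\Lambda}$ (resp. $\mathcal{M}^{\alpha,\circ,1}_{C,D,\Lambda}$) are, by definition, connections $(E,\nabla)$ with $(E,\nabla)\simeq(E,\nabla)\otimes(N,\nabla_N)$ for some $1\neq(N,\nabla_N)\in\Gamma'$, the isomorphism being one of parabolic connections. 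The second point is to note that the transverse subbundle $L$, although not part of the data of a point of $\mathcal{M}^{\alpha,\circ}_{C,D,\Lambda}$, is canonically recovered from $(\pi:Q\to C,\mathcal{F})$ as the unique section of negative self‑intersection (again by \autoref{P1-bundle-facts} together with \eqref{self-intersection-number}), so that $\psi$ necessarily carries $L$ to $L\otimes N$; this makes the argument insensitive to the choice of oper lift.

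For clarity I would organize the write‑up as: (1) recall that a fixed point yields $1\neq(N,\nabla_N)\in\Gamma'$ with $\deg N=0$, $N\not\simeq\mathcal{O}_C$, and a parabolic isomorphism $\psi:(E,\nabla)\simeq(E,\nabla)\otimes(N,\nabla_N)$; (2) invoke \eqref{self-intersection-number} under $2-2g-\deg(D)<0$ to get $\sigma\cdot\sigma<0$ and cite \autoref{P1-bundle-facts} for uniqueness of $\sigma$, hence $\psi(L)=L\otimes N$; (3) deduce $N\simeq\mathcal{O}_C$, contradicting $N\not\simeq\mathcal{O}_C$. Both the trace‑free case and the $T_1$‑case are handled identically since the argument only uses the self‑intersection of $\sigma$, which by \eqref{self-intersection-number} depends solely on $g$ and $\deg(D)$.
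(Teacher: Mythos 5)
Your argument is correct and is essentially the paper's own proof: both reduce to the uniqueness of the section of negative self-intersection (equivalently, of the maximal line subbundle) forced by $\sigma\cdot\sigma=2-2g-\deg(D)<0$, conclude that any isomorphism $(E,\nabla)\simeq(E,\nabla)\otimes(N,\nabla_N)$ must carry $L$ to $L\otimes N$, hence $N\simeq\mathcal{O}_C$, and then use that a square root of $(\mathcal{O}_C,d)$ on the trivial bundle is the trivial connection. The only cosmetic difference is that you run the last step as a contradiction while the paper runs it directly; the content is identical.
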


\begin{proof}	

	We need to prove that any rank $1$ connection $(L_0,\nabla_{L_0})\in\Gamma'$ such that $(E,\nabla)\otimes (L_0,\nabla_{L_0})\simeq (E,\nabla)$ is necessarily trivial. First, note that $L_0$ must have degree $0$.
	
	Denote by $\sigma$ the section of $\mathbb{P}(E)$ associated to $L$. From the equation \eqref{self-intersection-number} and our assumptions, we glean the inequality $\sigma\cdot\sigma<0$. Recall from \autoref{P1-bundle-facts} that $\sigma$ is the only section with negative self-intersection number. Line subbundles $L\otimes L_0$ and $L$ (of $E\otimes L_0$ and $E$, respectively) have same degree and are both maximal. By uniqueness of the maximal line subbundle, if $E\otimes L_0$ is isomorphic to $E$ such an isomorphism must send $L\otimes L_0$ to $L$, which in turn are isomorphic. Thus, $L_0=\mathcal{O}_C$ and it follows that $\nabla_{L_0}$ must be trivial.
\end{proof}

Let $(\pi:Q\rightarrow C,\mathcal{F},\sigma)$ be a $\operatorname{PGL}(2,\mathbb{C})$-oper on $C$ with polar divisor $D$ minimal, formal data $\bar{\Lambda}$ and without apparent singularity. Suppose that $2-2g-\deg(D)<0$. Assume moreover that $\deg(D)\equiv 0~ \operatorname{mod} ~2$ (resp. $\deg(D)\equiv 1~ \operatorname{mod} ~2$). Then, by \autoref{lem-stability} and \autoref{lem:opers-are-not-fixed} there exists a parabolic weight $\alpha$ such that by the construction above, $(\pi:Q\rightarrow C,\mathcal{F},\sigma)$ corresponds to a unique element in $\mathfrak{Ricc}^{\alpha,\circ,0,*}_{C,D,\bar{\Lambda}}$ (resp. $\mathfrak{Ricc}^{\alpha,\circ,1,*}_{C,D,\bar{\Lambda}}$).

\begin{rmk}\label{rmk:injectivity}
	Note that no two such opers $(\pi:Q\rightarrow C,\mathcal{F},\sigma)$ and $(\pi':Q'\rightarrow C,\mathcal{F}',\sigma')$ are mapped to the same element, because then $(\pi:Q\rightarrow C,\mathcal{F})=(\pi':Q'\rightarrow C,\mathcal{F}')$ and moreover $\sigma=\sigma'$ because by assumption, $\sigma$ is the only section of $(\pi:Q\rightarrow C,\mathcal{F})$ with negative self-intersection.
\end{rmk}

\subsection{The irregular Riemann-Hilbert correspondence}	

Let $(E,\nabla)$ be a meromorphic rank $2$ connection on a genus $g$ curve $C$, with polar divisor $D=\sum_{i=1}^dm_ip_i$ and polar locus $\Sigma=\{p_i:1\leq i\leq d\}$. Choose a base point $p_0$ in $C\smallsetminus\Sigma$. A presentation fundamental group $\pi(C\smallsetminus\Sigma,p_0)$ of the punctured curve is
\[\pi(C\smallsetminus\Sigma,p_0)=\braket{\alpha_i,\beta_i,\gamma_j,~ 1\leq i \leq g,~ 1\leq j\leq d ~|~ [\alpha_1,\beta_1]\cdots [\alpha_g,\beta_g]=\gamma_1\cdots\gamma_d},\]
where $\{\alpha_i,\beta_i\}$ is a canonical system of generators of $\pi(C,p_0)$ not going trough $\Sigma$, and where the $\gamma_i$ are defined as follows. For each $i$, choose a small contractible open neighborhood $U_i$ of $p_i$ in $C\smallsetminus\Sigma$, and a path $\delta_i$ connecting $p_0$ to $p_i$, which lies in $C\smallsetminus \Sigma$ except for its endpoints. Then, $\gamma_i$ is defined by traveling from $p_0$ to $U_i$ along $\delta_i$, encircling $p_i$ in the counterclockwise direction by a small loop in $U_i$, and then returning to $x$ along $\delta_i$.

The (global) \emph{monodromy representation} of $(E,\nabla)$ is defined (up to conjugacy) as follows. Let $\Omega_0$ be a connection matrix of $(E,\nabla)$ in a neighborhood of $p_0$, and let $Y$ be a fundamental solution of the system $dY+\Omega_0 Y=0$ around $p_0$. The analytic continuation of $Y$ along a loop $\gamma\in\pi_1(C\smallsetminus \Sigma)$ yields a new fundamental solution $Y^\gamma=YM^\gamma$ for some matrix $M^\gamma\in\operatorname{GL(2,\mathbb{C})}$. The monodromy representation of the system with respect to $Y$ is defined as usual, by
\begin{align*}
\pi_1(C\smallsetminus\Sigma,p_0)&\longrightarrow\operatorname{GL}(2,\mathbb{C})\\
\gamma&\longmapsto M^\gamma.
\end{align*}
Another choice of fundamental solution $\tilde{Y}$ leads to a monodromy representation that is conjugated to the above one in $\operatorname{GL}(2,\mathbb{C})$.

\paragraph{The generalized monodromy data.} In order to extend the classical Riemann-Hilbert correspondence to connections with irregular singularities, we must enrich the monodromy representation with some extra data. This includes the global monodromy representation, together with a decomposition of the local monodromies into: the formal local monodromies, %
the \emph{Stokes matrices} and the \emph{links} (or \emph{connection matrices}).
\paragraph{The wild character varieties.} Let us denote the set of generalized monodromy data by $\mathcal{R}(\mathbb{S},\mathbb{M})$, and it Riccati foliation counterpart by $\bar{\mathcal{R}}(\mathbb{S},\mathbb{M})$.
Different approaches exist in the literature to endow subsets of $\mathcal{R}(\mathbb{S},\mathbb{M})$, and $\bar{\mathcal{R}}(\mathbb{S},\mathbb{M})$
with a smooth structure: see \cite{MR2250948,MR2649335,MR3126570,boalch2015twisted,MR3383320,MR3802126,MR3932256,Allegretti_2020}.%

Our approach is to transport the smooth structure of the moduli spaces $\mathcal{M}^{\alpha,\circ}_{C,D,\Lambda}$ constructed by Inaba in \cite{MR4545855} to the corresponding subsets of the wild character varieties via the \emph{irregular Riemann-Hilbert correspondence}.

\begin{thm}[Irregular Riemann-Hilbert correspondence]
	The monodromy map
	\begin{align*}
	\mathcal{M}^{\alpha,\circ}_{C,D,\Lambda}&\longrightarrow\mathcal{R}(\mathbb{S},\mathbb{M})
	\end{align*}
	is injective. %
\end{thm}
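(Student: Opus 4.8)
The plan is to run the standard reconstruction that underlies any Riemann-Hilbert statement: recover the connection, up to isomorphism, from its generalized monodromy data. Concretely, I would fix two points $(E_1,\nabla_1,\mathcal L_1)$ and $(E_2,\nabla_2,\mathcal L_2)$ of $\mathcal M^{\alpha,\circ}_{C,D,\Lambda}$ with the same image in $\mathcal R(\mathbb S,\mathbb M)$ and produce an isomorphism of parabolic connections between them. Because neither connection has a projectively apparent singularity, each parabolic structure $\mathcal L_i$ is \emph{canonically} attached to $\nabla_i$ and $\Lambda$ --- at every pole $l_1^{(i)}$ is the $\lambda^{+}$-eigenspace of the negative part of the connection matrix, as recalled in the paragraph preceding \autoref{lem-stability}. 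Hence it is enough to build a holomorphic bundle isomorphism $g\colon E_1\to E_2$ with $g\circ\nabla_1=\nabla_2\circ g$; such a $g$ automatically carries $\mathcal L_1$ to $\mathcal L_2$. Note that $\alpha$-stability is irrelevant here: it is what makes $\mathcal M^{\alpha,\circ}_{C,D,\Lambda}$ a manifold, whereas injectivity of the monodromy map is a more elementary matter.

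The first step is to glue over the punctured curve. Write $C^{*}=C\smallsetminus\Sigma$ with $\Sigma=\{p_1,\dots,p_d\}$. Equality of the classes in $\mathcal R(\mathbb S,\mathbb M)$ implies in particular that the local systems underlying $(E_1,\nabla_1)|_{C^{*}}$ and $(E_2,\nabla_2)|_{C^{*}}$ have conjugate monodromy, so the Riemann-Hilbert correspondence on the non-compact curve $C^{*}$ (where every holomorphic bundle is trivial and a flat connection is determined by its monodromy) produces a flat bundle isomorphism $g^{*}\colon (E_1,\nabla_1)|_{C^{*}}\to(E_2,\nabla_2)|_{C^{*}}$; after the simultaneous conjugation of the base-point fundamental solutions $Y_1,Y_2$ built into $\mathcal R(\mathbb S,\mathbb M)$ we may arrange $g^{*}Y_1=Y_2$ near $p_0$. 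Everything then reduces to showing that $g^{*}$ extends across each pole $p_i$ to a holomorphic isomorphism intertwining the connections --- and this is precisely where the Stokes matrices and the link (connection) matrices are used.

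Next, for a fixed pole $p_i$, I would invoke the local analytic classification of irregular singularities: the Hukuhara-Turrittin reduction combined with the Stokes/multisummation theory of Birkhoff, Malgrange, Sibuya and Balser (see \cite[Thm. 6.8.1]{MR1084379} and \cite[Thm. 3.3.1]{MR3495546}). In the ramified case of \autoref{formal-normal-forms} one first pulls back along $x=z^{2}$ and works on the double cover, where the Stokes data carry the extra involutive symmetry, and then descends. Since $(E_1,\nabla_1)$ and $(E_2,\nabla_2)$ have the same formal normal form at $p_i$ (the common $\Lambda$, presented as in \autoref{formal-normal-forms}) and the same Stokes matrices relative to the system of singular directions recorded by the marked points of $(\mathbb S,\mathbb M)$ on the boundary circle over $p_i$, the local irregular Riemann-Hilbert correspondence gives an analytic gauge equivalence $h_i\colon(E_1,\nabla_1)\to(E_2,\nabla_2)$ on a neighbourhood of $p_i$, i.e. a holomorphic isomorphism of bundles-with-connection there. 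On the punctured neighbourhood, $g^{*}$ and $h_i$ differ by a flat automorphism of $(E_2,\nabla_2)$, that is, by an element of the centralizer of the local monodromy; equality of the link matrices --- which are themselves part of the monodromy data --- forces this element to be trivial, so $g^{*}=h_i$ near $p_i$ and in particular $g^{*}$ extends holomorphically across $p_i$. Doing this at every pole yields the global isomorphism $g$, and the theorem follows.

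The hard part will be exactly this last matching: proving that, once the formal data, the Stokes matrices and the connection matrices agree, the flat isomorphism $g^{*}$ has no growth at the punctures and hence extends as an isomorphism of \emph{holomorphic} bundles. This rests on the genuinely analytic content of the local theory --- the asymptotic existence and multisummability of the formal fundamental solutions, and the fact that a flat morphism which is formally meromorphic is meromorphic --- and on a careful bookkeeping of sectors, singular directions, the system of paths $(\delta_i)$ and the base-point conjugation, so that ``same generalized monodromy data'' becomes a literal equality rather than merely an equivalence; this is what the marked bordered surface $(\mathbb S,\mathbb M)$, with the Stokes directions appearing as boundary marked points, is built to encode. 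The ramified case adds the extra bookkeeping of descending the comparison from the $z$-disc to the $x$-disc. One may alternatively quote this global statement directly from Inaba's construction \cite{MR4545855}, where the injectivity in question is part of the irregular Riemann-Hilbert correspondence underlying his moduli space.
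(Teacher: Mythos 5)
The paper does not actually prove this statement: its ``proof'' consists entirely of pointers to the literature (\cite{lorayRH}, \cite[Thm. 19]{MR1128863}, \cite[App. A]{MR3126570}, and the books \cite{MR672182}, \cite{MR2363178}). Your proposal, by contrast, sketches the standard argument that underlies those references: glue the flat isomorphism over $C^{*}$ (classical Riemann--Hilbert on a non-compact curve) with local analytic gauge equivalences near each pole supplied by the Hukuhara--Turrittin/Stokes classification, and use the equality of formal data, Stokes matrices and link matrices to force the two to agree on overlaps so that the flat isomorphism extends holomorphically across the poles. This is the right skeleton, and you correctly dispose of the two points specific to this paper's setting: the parabolic structure is canonically determined by $(\nabla,\Lambda)$ in the absence of projectively apparent singularities, so an isomorphism of connections automatically respects it, and $\alpha$-stability plays no role in injectivity. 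You are also right that the flat automorphism measuring the discrepancy between $g^{*}$ and $h_i$ lies in the centralizer of the local monodromy and is killed only by matching the link matrices, not the monodromy alone --- that is the one place a careless version of this argument would break.

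That said, your write-up is itself a proof \emph{plan} rather than a proof: the analytic core (asymptotic existence and multisummability of formal solutions, the sectorial bookkeeping that turns ``same generalized monodromy data'' into literal equality of frames, the descent from the $z$-disc in the ramified case) is exactly the content you defer to Malgrange--Sibuya--Balser and to Inaba, which is the same deferral the paper makes wholesale. So the honest comparison is: you have reconstructed the architecture of the cited proofs, which is more than the paper offers, but neither you nor the paper supplies the hard local analysis; if this were to be written out in full, the extension step at each pole is where all the work lives.
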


We refer to \cite{lorayRH}.
See also \cite[Thm. 19]{MR1128863} and \cite[App. A]{MR3126570}. Other useful references include \cite{MR672182} and \cite{MR2363178}.\\

\begin{cor}[Irregular Riemann-Hilbert correspondence for Riccati foliations]
	The monodromy maps
	\begin{align*}
	\mathfrak{Ricc}^{\alpha,\circ,0,*}_{C,D,\bar{\Lambda}}\longrightarrow\bar{\mathcal{R}}(\mathbb{S},\mathbb{M})\text{~~ and ~~}\mathfrak{Ricc}^{\alpha,\circ,1,*}_{C,D,\bar{\Lambda}}\longrightarrow\bar{\mathcal{R}}(\mathbb{S},\mathbb{M})
	\end{align*}
	are injective.
\end{cor}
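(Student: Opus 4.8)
The plan is to deduce the corollary directly from the (just-stated) irregular Riemann--Hilbert correspondence for rank~$2$ connections, by projectivizing. First I would recall the concrete relationship established in Section~\ref{sec:proj-struct-GL-opers}: an element of $\mathfrak{Ricc}^{\alpha,\circ,0,*}_{C,D,\bar{\Lambda}}$ (resp. $\mathfrak{Ricc}^{\alpha,\circ,1,*}_{C,D,\bar{\Lambda}}$) is, by construction, the image under projectivization of an element of $\mathcal{M}^{\alpha,\circ,0,*}_{C,D,\Lambda}$ (resp. $\mathcal{M}^{\alpha,\circ,1,*}_{C,D,\Lambda}$) for the $\Lambda$ whose projectivization is $\bar{\Lambda}$ and whose trace negative part is the prescribed one; and conversely every such connection, being trace-fixed and without apparent singularity, determines its Riccati foliation uniquely up to the $\Gamma'$-action. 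Thus the monodromy map on $\mathfrak{Ricc}$ fits into a commutative square with the monodromy map $\mathcal{M}^{\alpha,\circ}_{C,D,\Lambda}\to\mathcal{R}(\mathbb{S},\mathbb{M})$ on top, the quotient-by-$\Gamma'$ and projectivization map $\mathcal{M}^{\alpha,\circ,0,*}_{C,D,\Lambda}/\Gamma'\xrightarrow{\sim}\mathfrak{Ricc}^{\alpha,\circ,0,*}_{C,D,\bar{\Lambda}}$ on the left, and the projectivization $\mathcal{R}(\mathbb{S},\mathbb{M})\to\bar{\mathcal{R}}(\mathbb{S},\mathbb{M})$ of generalized monodromy data (which sends a monodromy/Stokes/link datum of a rank~$2$ system to that of its projectivized Riccati foliation) on the right.

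The core step is then an injectivity argument: suppose two elements of $\mathfrak{Ricc}^{\alpha,\circ,0,*}_{C,D,\bar{\Lambda}}$ have the same generalized monodromy data in $\bar{\mathcal{R}}(\mathbb{S},\mathbb{M})$. Lift them to connections $(E,\nabla)$ and $(E',\nabla')$ in $\mathcal{M}^{\alpha,\circ,0,*}_{C,D,\Lambda}$ with the fixed trace $(\mathcal{O}_C,d)$. Their projectivized generalized monodromy data agree, so the full (rank~$2$) generalized monodromy data of $(E,\nabla)$ and $(E',\nabla')$ agree up to a twist by a rank~$1$ datum; since both traces are the trivial connection $(\mathcal{O}_C,d)$, the Fuchs relation and the fixed negative part of the trace force that twisting datum to correspond to a square root of $(\mathcal{O}_C,d)$, i.e. an element of $\Gamma'$ (this is exactly the lift-ambiguity computation from Section~\ref{sec:proj-struct-GL-opers}, read on the monodromy side, using that Riemann--Hilbert preserves the group structures). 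Applying the irregular Riemann--Hilbert correspondence for rank~$2$ connections then gives $(E,\nabla)\otimes(L_0,\nabla_{L_0})\simeq(E',\nabla')$ for some $(L_0,\nabla_{L_0})\in\Gamma'$, hence $(E,\nabla)\equiv(E',\nabla')\bmod\Gamma'$, and therefore the two Riccati foliations coincide in $\mathfrak{Ricc}^{\alpha,\circ,0,*}_{C,D,\bar{\Lambda}}$. The identical argument with $\Gamma_1$ in place of $\Gamma$ handles $\mathfrak{Ricc}^{\alpha,\circ,1,*}_{C,D,\bar{\Lambda}}$; note that fixing the single simple pole with residue $-1$ still leaves only the $\Gamma'$ twist ambiguity, since any allowed rank~$1$ twist preserving the polar divisor and trace negative part must be unramified holomorphic of degree~$0$, and a square root of the trivial connection.

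The main obstacle, and the step requiring the most care, is controlling precisely the twist ambiguity on the level of generalized monodromy data: one must check that two rank~$2$ generalized monodromy data with the same projectivization differ exactly by a rank~$1$ generalized monodromy datum, and that imposing the prescribed trace (formal data at the single pole in the odd case, or triviality of the determinant monodromy in the even case) pins this datum down to $\Gamma'$ rather than a larger group. This is the monodromy-side mirror of Proposition~\ref{prop:lift} and its surrounding discussion, and it uses the compatibility of the irregular Riemann--Hilbert correspondence with tensor products and with taking determinants/traces (the formal monodromy, Stokes, and link data of a tensor product being the tensor products of the respective data). Once this compatibility is granted, everything else is formal: commutativity of the square, the already-established bijection on the left edge, and the injectivity of the top arrow yield the claim. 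I would also remark that, since $2-2g-\deg(D)<0$ is in force (by \autoref{rmk:hypothesis} under the standing hypotheses), the section $\sigma$ of each such oper is the unique one with negative self-intersection, so that no additional identifications among Riccati foliations can occur beyond the $\Gamma'$-action, consistently with \autoref{rmk:injectivity}.
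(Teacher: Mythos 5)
Your argument is correct and is essentially the paper's intended deduction: the corollary is stated there without proof, as an immediate consequence of the rank-$2$ theorem together with the construction of $\mathfrak{Ricc}^{\alpha,\circ,0,*}_{C,D,\bar{\Lambda}}$ and $\mathfrak{Ricc}^{\alpha,\circ,1,*}_{C,D,\bar{\Lambda}}$ as quotients $\mathcal{M}^{\alpha,\circ,0,*}_{C,D,\Lambda}/\Gamma'$ and $\mathcal{M}^{\alpha,\circ,1,*}_{C,D,\Lambda}/\Gamma'$. Your careful handling of the twist ambiguity on the monodromy side (pinning the twisting datum down to $\Gamma'$ using the fixed trace and formal data, via compatibility of the correspondence with tensor products) supplies exactly the details the paper leaves implicit.
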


We denote by $\bar{\mathcal{R}}^*(\mathbb{S},\mathbb{M},(\lambda_{-1}^{(i)}))$ the image of this map once the curve, the pole order and the residues are fixed, and endow it with the smooth complex structure induced by the monodromy map and the smooth structure of the moduli spaces of singular Riccati foliations.

\subsection{Universal isomonodromic deformations}\label{sec:isomonodromic-deformations}

In \cite{MR2667785}, Heu constructed the universal isomonodromic deformations of trace-free meromorphic rank $2$ connections on curves (see also her thesis \cite{heu:tel-00358039}). Recall that a topologically trivial, analytic deformation \[(E_t\rightarrow C_t,\nabla_t)_{t\in T}~~(\text{topologically trivial means $T$ is contractible})\] of some initial trace-free rank $2$ connection $(E_0\rightarrow C_0,\nabla_0)$ is called an \emph{isomonodromic deformation} if it is induced by a flat, locally constant connection $(\mathcal{E}\rightarrow X,\nabla)$ over the total space of the analytic family of curves $X\rightarrow T$ whose fiber over $t$ is $C_t$.

Here, \emph{locally constant} means that locally around each point of $X$, after a convenient change of coordinate (a combination of a gauge transformation of $\mathcal{E}\rightarrow X$ and a change of coordinate in $X$), the connection matrix of $\nabla$ does not depend on the parameter $t\in T$.

\begin{rmk}
	Isomonodromic deformations where also studied by Inaba in his paper \cite{MR4563425}.
\end{rmk}

Heu's construction can be implemented \emph{mutatis mutandis} to get the universal isomonodromic deformations of an initial Riccati foliation $(Q_0,\mathcal{F}_0):=\mathbb{P}(E_0,\nabla_0)$ (see \cite[Sec. 5.2]{MR2667785}).

\paragraph{Explicit construction.} Let us briefly recall the explicit construction of the universal isomonodromic deformations; for the full details in the case of meromorphic trace-free rank $2$ connections, we refer to \cite[Sec. 3.3.2]{MR2667785}.

It is natural to start with the case of non-singular Riccati foliation before dealing with the singular regular case and eventually the general meromorphic case (including irregular singularities). So let $(Q_0,\mathcal{F}_0)$ be an initial holomorphic Riccati foliation on a curve $C_0$ on $S$. It turns out that it is sufficient to deform the complex structure of the curve $C_0$ to obtain the universal isomonodromic deformation of this initial Riccati foliation. This is a direct consequence of the classical Riemann-Hilbert correspondence (which amounts to the suspension of the monodromy representation). The construction goes as follows. Let us denote by \[X_\mathcal{T}\longrightarrow\mathcal{T}(S)\] the Teichmüller universal family of curves. Because the Teichmüller space $\mathcal{T}(S)$ is contractible, the inclusion $C_0\hookrightarrow X_\mathcal{T}$ induces an isomorphism of the fundamental groups $\pi_1(C_0,*)\simeq\pi_1(X_\mathcal{T},*)$. Hence, the suspension of the monodromy representation of $(Q_0,\mathcal{F}_0)$ provides with a connection $(\mathcal{Q}\rightarrow X_\mathcal{T},\mathcal{F})$ inducing the universal isomonodromic deformation of $(Q_0,\mathcal{F}_0)$.

In the regular singular cases, we could use the generalized Riemann-Hilbert correspondence. However, Heu adapted a construction of Malgrange using only the classical correspondence. Let $(Q_0,\mathcal{F}_0)$ be an initial singular Riccati foliation with polar divisor $D_0$, admitting $d$ poles (of orders at most $1$). Here again, it suffices to deform the complex structure of $C_0$. Let us denote by \[X_\mathcal{T}\longrightarrow\mathcal{T}(S,d)\] the Teichmüller universal family of marked curves, and $\mathcal{D_\mathcal{T}}$ the divisor on $X_\mathcal{T}$ corresponding to marked points. On one hand, the suspension of the monodromy of the initial Riccati foliation gives a Riccati foliation $(\mathcal{Q}^*\rightarrow X_\mathcal{T}^*,\mathcal{F}^*)$ over $X_\mathcal{T}^*=X_\mathcal{T}\smallsetminus \mathcal{D}_\mathcal{T}$ extending uniquely $(Q_0,\mathcal{F}_0)_{|X_0^*}$, where $X_0^*=X_0\smallsetminus D_0$. On the other hand, around each component $\mathcal{D}^i_\mathcal{T}$ of $\mathcal{D}_\mathcal{T}$, there exists a tubular neighborhood $\mathcal{U}^i_\mathcal{T}$ adapted to the fibers of the Teichmüller family of curves. It is possible to extend $(Q_0,\mathcal{F}_0)_{|U^i_0}$, where $U^i_0=\mathcal{U}^i_\mathcal{T}\cap C_0$, as a product over this tubular neighborhood. Each of the resulting Riccati foliations $(\mathcal{Q}^i,\mathcal{F}^i)$ have the same monodromy as $(\mathcal{Q}^*\rightarrow X_\mathcal{T}^*,\mathcal{F}^*)$ in restriction to their common domain of holomorphy, so that, according to the classical Riemann-Hilbert correspondence, there is a unique isomorphism gluing them into a connection $(\mathcal{Q},\mathcal{F})$ over $X_\mathcal{T}$ inducing the universal isomonodromic deformation of $(Q_0,\mathcal{F}_0)$.

In the general meromorphic case (including irregular singularities), the above gluing is not always unique (cf. Lemma 3.6 in \cite{MR2667785}). Hence, the deformation of the complex structure is no longer sufficient to obtain the universal isomonodromic deformation of an initial Riccati foliation $(Q_0,\mathcal{F}_0)$. The additional freedom in the construction corresponds to the irregular part of the formal invariants of the Riccati foliation, that has to be deformed as well. In Heu's work, they appear as $(m_i-1)$-jet of germs of coordinate changes around each pole $p_i$ of order $m_i>1$, but this is equivalent (cf. \autoref{jet-vs-irregular-formal-invariants} below). The universal cover $J$ of the space of such jets is then added to the parameter space to form a "thickened Teichmüller space", or a moduli space of "irregular curves" $T=J\times\mathcal{T}(S,d)$ (passing to the universal covering ensures this new parameter space remains contractible). Then, the universal isomonodromic deformation of $(Q_0,\mathcal{F}_0)$ is constructed on a family of curves \[(X,\mathcal{D})=(J\times X_\mathcal{T},J\times\mathcal{D}_\mathcal{T})\] over $T$ via a gluing construction similar to the previous one.
\begin{rmk}\label{jet-vs-irregular-formal-invariants}
	Consider a Riccati equation \eqref{Riccati} in the irregular unramified case. Its Hukuhara-Tirrittin formal normal form reads
	\begin{equation}\label{Riccati-irregular-unramified}
	dy+\lambda ydx=0,
	\end{equation}
	for some $\lambda=\frac{\lambda_{-m}}{x^m}+\cdots+\frac{\lambda_{-1}}{x}$ and $\lambda_i\in\mathbb{C}$.
	By an additional change of the $x$-coordinate, this equation can be put into the normal form \begin{equation}\label{eq:normal-form-without-irregular-formal-invariants}
	dy+\left[\frac{1}{x^m}+\frac{\lambda_{-1}}{x}\right]ydx=0
	\end{equation}  where $\lambda_{-1}$ is the residue of $\lambda dx$ and is invariant under a change of the $x$-coordinate (see \cite[proof of Prop. 2.3]{loray-book}). A similar computation holds in the irregular ramified case, but in the variable $z$.%

	More generally, consider a Riccati equation \eqref{Riccati} with no assumption on the minimality of $m$, and denote by $\omega$ the $1$-form defining the foliation $\mathcal{F}$. If $\varphi$ is a change of the $x$ coordinate centered at $0$ such that \[\varphi(x)=\operatorname{id}(x)\text{~~ mod ~~}x^m,\] then there is a holomorphic gauge transformation $g$ such that $g\cdot(\varphi^*\omega)=\omega$ \cite[Lem. 3.6]{MR2667785}.

	In particular, this tells us that the Hukuhara-Turrittin formal normal form \eqref{Riccati-irregular-unramified} can be put into the form \eqref{eq:normal-form-without-irregular-formal-invariants} using only a $(m-1)$-jet of coordinate change.%

\end{rmk}

\subsection{Construction of a family of singular Riccati foliations}\label{sec:construction-local-family}

Our approach is a (local) combination of the constructions of Inaba and Heu, in order to obtain a smooth universal family of singular Riccati foliations that (locally) contains the ones subjacent to $\operatorname{PGL}(2,\mathbb{C})$-opers induced by meromorphic projective structures without apparent singularity (cf. \autoref{family-of-opers}).

Thanks to the irregular Riemann-Hilbert correspondence, we know that the smooth moduli space \[\mathcal{M}(\mathbb{S},\mathbb{M},(\lambda_{-1}^{(i)})):=\bar{\mathcal{R}}^*(\mathbb{S},\mathbb{M},(\lambda_{-1}^{(i)}))\times T,\]
where $T=J\times\mathcal{T}(S,d)$, contains all singular Riccati foliations associated with elements in $\mathcal{P}^\circ(\mathbb{S},\mathbb{M},(\lambda_{-1}^{(i)}))$ (if $2-2g-\sum_{i=1}^dn_i<0$).

\paragraph{Construction with additional parameters.} Let $I\subset\bar{\mathcal{R}}^*(\mathbb{S},\mathbb{M},(\lambda_{-1}^{(i)}))$ be a small open ball. The universal family of \autoref{Inaba-thm} can be transported all the way down to $I$, while projectivizing the connections of the family, making it a holomorphic family of singular Riccati foliation on  $I$.

This provides a smooth family of initial singular Riccati foliations $(Q_0,\mathcal{F}_0)$. Those parameters can be added to Heu's construction of isomonodromic deformations in order to obtain a holomorphic family \[(Q_t\rightarrow C_t,\mathcal{F}_t)_{t\in I\times J\times \mathcal{T}(S,d)}\] of singular Riccati foliations on the family of curves
\[(X,\mathcal{D})=(I\times J\times X_\mathcal{T},I\times J\times\mathcal{D}_\mathcal{T}).\]

\section{The monodromy map is a local~biholomorphism}

To an equivalence class of marked meromorphic projective structures with signed residues in $\mathcal{P}^\circ(\mathbb{S},\mathbb{M},(\lambda_{-1}^{(i)}))$ corresponds (cf. \eqref{marquing-covering-map}) a unique meromorphic projective structure without apparent singularity $P$ on some complex curve on the genus $g$ real surface $S$, with residues $(\pm\lambda_{-1}^{(i)})$. Let us denote by $(n_i)$ its pole orders (cf. \autoref{rmk:pole-orders}). To $P$ corresponds a unique $\operatorname{PGL}(2,\mathbb{C})$-oper $(\pi:Q\rightarrow C,\mathcal{F},\sigma)$ on $C$, with minimal polar divisor $\tilde{D}=\sum_{i=1}^dm_ip_i$, where  $m_i:=\lceil \frac{n_i}{2}\rceil$. Assume that if $g=0$, then $|\mathbb{M}|\geq3$, and that if $g=1$, then $|\mathbb{M}|\geq 1$ (cf. \autoref{rmk:hypothesis}). The constructions of the preceding sections provide us with a well-defined (thanks to \autoref{indigenousbundle-meromorphic}) injective map
\[e:\mathcal{P}^\circ(\mathbb{S},\mathbb{M},(\lambda_{-1}^{(i)}))\longrightarrow\mathcal{M}(\mathbb{S},\mathbb{M},(\lambda_{-1}^{(i)})).\]
The injectivity is a direct consequence of \autoref{rmk:injectivity} (which is, in fact, an analogous for meromorphic projective structures of the Poincaré theorem \ref{Poincare-thm}) and of the injectivity of the Riemann-Hilbert map. Moreover, this injection is holomorphic.%

\subsection{Factorization of the monodromy map}

\begin{defi}
	The composition of the above map with the first projection is denoted by
	\begin{align}
	\operatorname{Mon}_{S,(n_i),(\lambda_{-1}^{(i)})}:\mathcal{P}^\circ(\mathbb{S},\mathbb{M},(\lambda_{-1}^{(i)}))\longrightarrow \bar{\mathcal{R}}^*(\mathbb{S},\mathbb{M},(\lambda_{-1}^{(i)}))
	\end{align}
	and is called the \emph{monodromy map}.
\end{defi}

This map is holomorphic. The monodromy of a projective structure is the monodromy of the underlying connection.

\subsection{Ehresmann transversality}

Let $(Q\rightarrow X,\mathcal{F})$ be the universal isomonodromic deformation constructed by Heu (see \autoref{sec:isomonodromic-deformations}) on the analytic family of compact curves $f:X \rightarrow T$. Recall that $\mathcal{F}$ is a codimension one singular holomorphic Riccati foliation over $\mathcal{Q}$, with polar divisor $\mathcal{D}$ on $X$.

\begin{lem}[Local "$\mathcal{C}^\infty$ product of bundles structure"]\label{localproductstructure}
	Let $t_0\in T$. There exists an open neighborhood $U$ of $t_0$ in $T$ such that $\mathcal{F}$ has a "$\mathcal{C}^\infty$ product structure" over $f^{-1}(U)$.
\end{lem}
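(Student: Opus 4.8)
The plan is to exploit the very definition of \emph{locally constant} for the universal isomonodromic deformation: around any point of the total space $X$, after a combined gauge transformation of $\mathcal{Q}\to X$ and a change of coordinate on $X$, the Pfaffian equation of $\mathcal{F}$ becomes independent of the parameter $t\in T$. First I would shrink $T$ to a contractible (say polydisc) neighborhood $U$ of $t_0$, so that the family of curves $f^{-1}(U)\to U$ is $\mathcal{C}^\infty$-trivial over $U$: there is a diffeomorphism $f^{-1}(U)\simeq C_{t_0}\times U$ commuting with the projection to $U$. This is the standard Ehresmann fibration theorem applied to the proper submersion $f$ (away from $\mathcal{D}$ one may, if one prefers, first cut out small polydisc tubular neighborhoods of the components of $\mathcal{D}$ and trivialize separately, then glue). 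I would carry this trivialization out compatibly with the divisor $\mathcal{D}$, i.e. so that each component $\mathcal{D}_i\cap f^{-1}(U)$ corresponds to $\{p_i\}\times U$ for a fixed point $p_i\in C_{t_0}$, which is possible because the $\mathcal{D}_i$ are themselves sections of $f$.

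Next I would upgrade this to a product structure of the $\mathbb{P}^1$-bundle $\mathcal{Q}\to X$ together with the foliation $\mathcal{F}$. The point is that, by the local-constancy property recalled in \autoref{sec:isomonodromic-deformations}, we may cover $C_{t_0}$ by finitely many trivializing charts $V_\alpha$ of $\mathcal{Q}$ over which, after an appropriate gauge transformation $g_\alpha$ of the bundle and a coordinate change $\varphi_\alpha$ on the base, the Riccati $1$-form defining $\mathcal{F}$ does not depend on $t$. Over each such chart this gives an honest product: $\big(\mathcal{Q},\mathcal{F}\big)|_{\pi^{-1}(V_\alpha)\times U'_\alpha}\simeq \big(\mathcal{Q}_{t_0},\mathcal{F}_{t_0}\big)|_{\pi^{-1}(V_\alpha)}\times U'_\alpha$ as foliated $\mathbb{P}^1$-bundles, where $U'_\alpha$ is a possibly smaller neighborhood of $t_0$. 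The remaining task is to patch these local product structures into a single global $\mathcal{C}^\infty$ product structure over a common neighborhood $U\subset\bigcap_\alpha U'_\alpha$ of $t_0$. I would do this with a smooth partition of unity $(\chi_\alpha)$ subordinate to the $V_\alpha$ on the curve $C_{t_0}$: interpolate between the local trivializing diffeomorphisms using the $\chi_\alpha$ to obtain a global fibre-preserving $\mathcal{C}^\infty$ diffeomorphism $\Phi:\mathcal{Q}|_{f^{-1}(U)}\to \mathcal{Q}_{t_0}\times U$ covering the Ehresmann trivialization of the curves, and which carries $\mathcal{F}$ to (the pullback of) $\mathcal{F}_{t_0}$. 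Here one only needs the diffeomorphism to intertwine the foliations, not the gauge transformations, so the obstruction to gluing the $g_\alpha$ holomorphically is irrelevant — smooth interpolation always works. Near the components of $\mathcal{D}$ one uses the same argument in the blown-up / tubular picture; since the foliation there is of product type by construction of the isomonodromic deformation, no extra care is needed beyond keeping $\mathcal{D}$ fixed.

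The main obstacle is purely bookkeeping: one must ensure that the local products are compatible enough on overlaps $V_\alpha\cap V_\beta$ that the partition-of-unity interpolation produces a genuine diffeomorphism (not merely a smooth map) carrying $\mathcal{F}$ to $\mathcal{F}_{t_0}$. The standard fix is to note that on overlaps the two local trivializations differ by a $t$-dependent automorphism of the restricted foliated $\mathbb{P}^1$-bundle equal to the identity at $t=t_0$; such automorphisms form a (local) group and, after further shrinking $U$, lie in a contractible neighborhood of the identity, so that the interpolation stays inside the automorphisms and hence remains invertible. I would also record explicitly that transversality of $\mathcal{F}$ to the fibres of $\pi$, and generic transversality to the relevant sections, are preserved because they are open conditions and hold on the central fibre; this is what makes the resulting "product of bundles structure" the correct notion for the transversality argument that follows. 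I expect this lemma to occupy about a page, with the partition-of-unity gluing being the only nontrivial step and everything else a direct appeal to Ehresmann's theorem and Heu's construction.
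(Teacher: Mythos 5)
Your overall strategy (trivialize the curve family \`a la Ehresmann compatibly with $\mathcal{D}$, then use the local constancy of Heu's construction near the poles to upgrade to a foliated trivialization of $\mathcal{Q}$) is the right one and matches the paper's. But the step you yourself flag as the crux --- gluing the local product trivializations by applying a partition of unity directly to the trivializing \emph{diffeomorphisms} --- is a genuine gap, and the proposed repair does not close it. The set of fibre-preserving diffeomorphisms carrying $\mathcal{F}$ to the product foliation is not convex: a pointwise combination $\sum_\alpha\chi_\alpha\Phi_\alpha$ of maps each conjugating $\mathcal{F}$ to $\Pi^*\mathcal{F}_{|C_0}$ has no reason to conjugate $\mathcal{F}$ to anything, and ``the transition automorphisms lie in a contractible neighborhood of the identity'' does not imply that their affine combinations remain automorphisms (already for rotations near the identity this fails). ``Smooth interpolation always works'' is precisely the false step.

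The fix --- and what the paper actually does --- is to interpolate at the infinitesimal level rather than at the level of maps. On the base, one glues with a partition of unity the local \emph{lifts to $X$ of the radial vector field on $U$} (in the distinguished coordinates near each $p_{i,0}$, and in constant-rank coordinates elsewhere); the set of such lifts over a fixed base vector field is an affine space, so the convex combination is again a lift, and its flow gives $\Theta$. One then lifts this single vector field \emph{horizontally with respect to $\mathcal{F}$}, viewing $\mathcal{F}$ as an Ehresmann connection on $\pi:\mathcal{Q}\rightarrow X$: away from $\mathcal{D}$ the horizontal lift exists and is unique by transversality of $\mathcal{F}$ to the fibres of $\pi$, and near $\mathcal{D}^i$ the product coordinates furnish a lift tangent to $\mathcal{F}$ that agrees with the horizontal one where both are defined. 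The flow of a vector field tangent to $\mathcal{F}$ automatically preserves $\mathcal{F}$, so the resulting $\Phi$ conjugates $\mathcal{F}$ to the product foliation with no compatibility condition to check on overlaps. Your write-up would be correct if you replaced the interpolation of the $\Phi_\alpha$ by this two-stage vector-field argument; as it stands, the gluing step fails.
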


Let us denote $C_0:=f^{-1}(t_0)$. By a \emph{$\mathcal{C}^\infty$ product structure}, we mean that there exist $\mathcal{C}^\infty$-diffeomorphisms $\Phi$ and $\Theta$ making the diagram

\[\xymatrix{
	\mathcal{Q}_{|f^{-1}(U)} \ar[d]^{} \ar[r]^{\Phi} & U\times\mathcal{Q}_{|C_0} \ar[d]^{}\\
	f^{-1}(U) \ar[r]^{\Theta} & U\times C_0
}\]
commute, and conjugating the foliation $\mathcal{F}_{|f^{-1}(U)}$ to the product Riccati foliation $\Pi^*\mathcal{F}_{|C_0}$ where $\Pi:U\times \mathcal{Q}_{|C_0}\rightarrow \mathcal{Q}_{|C_0}$ is the projection to the second factor.

\begin{proof}[Proof of \autoref{localproductstructure}]
	
	Let $t_0\in T$ and denote by $C_0=f^{-1}(t_0)$ and $p_{i,0}$ the unique point in $\mathcal{D}^i\cap C_0$, where $\mathcal{D}^i$ is an irreducible component of $\mathcal{D}$. By construction, the foliation $\mathcal{F}$ has a \emph{holomorphic} product structure along $\mathcal{D}^i$, locally over an open neighborhood $V_{p_{i,0}}\subset X$ containing $p_{i,0}$. This is the local constancy property (see \cite{MR2667785}). We are going to extend this product structure in a $\mathcal{C}^\infty$-smooth way over a $f$-saturated neighborhood of $C_0$.
	
	The analytic family $f$ is in particular a $\mathcal{C}^\infty$-submersion with compact connected fibers. Thus, Ehresmann's theorem \cite[Sec. 1]{MR0042768} implies that it is the projection map of a locally trivial bundle of class~$\mathcal{C}^\infty$. However, in order to show the lemma we would like to get local trivializations adapted to the meromorphic connection $\mathcal{F}$ and its polar divisor $\mathcal{D}$.

	We start by covering $C_0\subset X$ with an open cover $(V_j)_j$ containing the open neighborhood $V_{p_{i,0}}$ of all $p_{i,0}$ as above, and such that on all $V_j$ (except maybe for the $V_{p_{i,0}}$) the constant rank theorem for the submersion $f$ holds (meaning there are coordinate charts in which $f$ is a projection) and such that the only $V_j$ containing $p_{i,0}$ is $V_{p_{i,0}}$. Then, by compacity of $C_0$, we extract a finite subcover of $(V_j)_j$ which we denote in the same way. Note that this subcover still contains all of the $V_{p_{i,0}}$.
	
	There exists an open neighborhood of $C_0$ in $X$ of the form $V:=f^{-1}(U)\subset(\cup_j V_j)$, with $U$ an open neighborhood of $t_0$. We denote again $V_j:=V_j\cap V$. On each $V_j$ we can find (using the coordinates provided by the constant rank theorem and the coordinates on $V_{p_{i,0}}$ in which the $1$-form defining $\mathcal{F}$ does not depend on the parameter $t$) a $\mathcal{C}^\infty$-foliation which is the pull-back of the radial foliation on $U$ centered at $t_0$. Those foliations are induced by $\mathcal{C}^\infty$-vector fields. We use partitions of unity to glue them and form a vector field $v$ on $V$. Thanks to the compacity of the fibers of $f$, the flow of $v$ exists over all $V$ and gives a $\mathcal{C}^\infty$ trivialization $f^{-1}(U)\simeq~U\times C_0$.

	Using this particular local trivialization, we use again an argument similar to the proof of Ehresmann's theorem in order to obtain $\Phi$. We see the foliation $\mathcal{F}$ as an Ehresmann connection on $\pi:\mathcal{Q}\rightarrow X$. There exist lifts of the previous vector field, horizontal with respect to the connection $\mathcal{F}$. Away from the polar divisor $\mathcal{D}$ of the connection, this is a classical fact. On a neighborhood of $\mathcal{D}^i$, it is the result of our choice of coordinates on the $V_{p_{i,0}}$.

	The compacity of the fibers of $\pi$ ensures the existence of the flow. It gives an isomorphism $\Phi$ satisfying the claimed properties.
\end{proof}

\subsection{Proof of the main theorem}

We are now able to prove ou main theorem.

\begin{thm}
	\label{thm:local-injectivity}
	Assume that if $g=0$, then $|\mathbb{M}|\geq3$, and that if $g=1$, then $|\mathbb{M}|\geq 1$. Then, the monodromy map \[\operatorname{Mon}_{S,(n_i),(\lambda_{-1}^{(i)})}:\mathcal{P}^\circ(\mathbb{S},\mathbb{M},(\lambda_{-1}^{(i)}))\longrightarrow \bar{\mathcal{R}}^*(\mathbb{S},\mathbb{M},(\lambda_{-1}^{(i)}))\] is a local biholomorphism.
\end{thm}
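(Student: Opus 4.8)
The plan is to prove local injectivity of $\operatorname{Mon}_{S,(n_i),(\lambda_{-1}^{(i)})}$ by an Ehresmann--Thurston-type argument using the isomonodromic flow, and then deduce that the map is a local biholomorphism by combining local injectivity with the already-established holomorphy of the monodromy map, via the open mapping theorem (or rather the inverse function theorem applied after a dimension count). First I would fix a point $\tau_0 \in \mathcal{P}^\circ(\mathbb{S},\mathbb{M},(\lambda_{-1}^{(i)}))$ and invoke \autoref{family-of-opers} to get, on a neighborhood $U$ of $\tau_0$, a holomorphic family of $\operatorname{PGL}(2,\mathbb{C})$-opers $(\Pi_{|C_\tau},\mathcal{F}_{|Q_\tau},\mathfrak{S}_{|C_\tau})$ realizing the tautological family of projective structures. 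Projectivizing the universal family coming from Inaba's moduli space $\mathcal{M}^{\alpha,\circ}_{C,D,\Lambda}$ (via \autoref{Inaba-thm}, \autoref{lem-stability}, \autoref{lem:opers-are-not-fixed}) together with Heu's universal isomonodromic deformation (Section~\ref{sec:isomonodromic-deformations}, Section~\ref{sec:construction-local-family}) yields a smooth family $(Q_t \to C_t, \mathcal{F}_t)_{t \in I \times J \times \mathcal{T}(S,d)}$ of singular Riccati foliations into which the family of opers from \autoref{family-of-opers} embeds holomorphically; concretely, $e$ factors through this embedding.

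Next I would establish local injectivity. Suppose $\tau_1, \tau_2 \in U$ are two marked meromorphic projective structures (close to $\tau_0$) with the same generalized monodromy data, i.e. $\operatorname{Mon}(\tau_1) = \operatorname{Mon}(\tau_2)$. Their associated opers $(\pi_j : Q_j \to C_j, \mathcal{F}_j, \sigma_j)$, $j=1,2$, then have the same monodromy, the same formal data (since the residues $(\lambda_{-1}^{(i)})$ are fixed and the irregular formal part is part of the generalized monodromy data carried in $\bar{\mathcal{R}}^*$), and the same polar divisor orders. By the irregular Riemann--Hilbert correspondence for Riccati foliations, the pair $(\pi_1,\mathcal{F}_1)$ and $(\pi_2,\mathcal{F}_2)$ lie in the same isomonodromic leaf; so after applying the $\mathcal{C}^\infty$ product structure of \autoref{localproductstructure} to Heu's family over a neighborhood of the relevant point in $T$, the foliations $\mathcal{F}_1$ and $\mathcal{F}_2$ become identified, and the two opers differ only by their transverse sections $\sigma_1, \sigma_2$ inside one fixed Riccati foliation $(\pi : Q \to C, \mathcal{F})$ on a fixed curve. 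But by \autoref{rmk:injectivity} (together with the self-intersection formula \eqref{self-intersection-number} and the hypothesis on $(g,|\mathbb{M}|)$ excluding the special cases via \autoref{rmk:hypothesis}), $\sigma$ is the unique section with negative self-intersection, hence $\sigma_1 = \sigma_2$; therefore the opers coincide, and by \autoref{indigenousbundle-meromorphic} the projective structures $\tau_1$ and $\tau_2$ are equivalent. Here one must be slightly careful: the isomonodromic leaf may a priori only parametrize structures with varying complex structure, so the precise statement is that the leaf through $\tau_1$ in the deformation family hits $\tau_2$, and the ``retraction'' of two nearby opers with identical monodromy onto each other along the flow is what forces equality; this is the content of the Ehresmann transversality lemma applied fiberwise.

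Finally I would conclude. Local injectivity plus holomorphy of $\operatorname{Mon}_{S,(n_i),(\lambda_{-1}^{(i)})}$ gives, by the open mapping theorem for holomorphic maps between complex manifolds, that the map is open; combined with injectivity it is a homeomorphism onto its image, and a holomorphic homeomorphism between complex manifolds of the same dimension is a biholomorphism (invoke purity of branch locus / the fact that an injective holomorphic map is an immersion, or simply that the source and target have equal dimension by \autoref{prop:fix-residues} and Inaba's dimension formula in \autoref{Inaba-thm} passed through the trace and $\Gamma'$-quotient computations of \autoref{lem:fix-trace}, so that an open injective holomorphic map is a local biholomorphism). The main obstacle I anticipate is the second step: making rigorous the identification of two opers in the same isomonodromic leaf. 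The $\mathcal{C}^\infty$ product structure of \autoref{localproductstructure} is only smooth, not holomorphic, so one must argue that the isomonodromic flow, which is holomorphic on the parameter space $T$, transports $\tau_1$'s oper to $\tau_2$'s oper within the image of $e$; this requires knowing that the embedded family of opers from \autoref{family-of-opers} is transverse to the isomonodromic foliation in the ambient smooth family, or equivalently that the only way two opers over (possibly different) curves can share all generalized monodromy data is via the isomonodromic deformation --- precisely the rigidity encoded in \autoref{rmk:injectivity} together with injectivity of Riemann--Hilbert. Handling the dependence on the deformed complex structure and the irregular-curve parameters $J$ simultaneously, while keeping track of markings, is where the bookkeeping will be heaviest.
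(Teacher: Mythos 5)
Your overall architecture matches the paper's: construct the local family of opers from \autoref{family-of-opers}, embed it via $e$ into the universal family of singular Riccati foliations built from Inaba's moduli space and Heu's isomonodromic deformation, prove local injectivity using the $\mathcal{C}^\infty$ product structure of \autoref{localproductstructure}, and conclude by holomorphy, the dimension count (\autoref{prop:fix-residues}, \autoref{lem:fix-trace}) and invariance of domain. The last step and the setup are fine.

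However, the key injectivity step has a genuine gap, and it is exactly the one you flag as ``the main obstacle'' without resolving it. You reduce to the claim that the two opers ``differ only by their transverse sections $\sigma_1,\sigma_2$ inside one fixed Riccati foliation on a fixed curve'' and then invoke \autoref{rmk:injectivity} (uniqueness of the section of negative self-intersection) to get $\sigma_1=\sigma_2$. This does not work: the identification $H$ furnished by \autoref{localproductstructure} is only a $\mathcal{C}^\infty$ isomorphism of foliated bundles, so $C_1$ and $C_2$ are identified only as smooth surfaces (their complex structures correspond to a priori distinct points $t_1\neq t_2$ of $T$ -- this is precisely the situation of \autoref{fig:transversality}), and $\tilde{\sigma}_2:=H(\sigma_2)$ is merely a smooth section of $Q_1$, not a holomorphic one. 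The uniqueness of the minimal section is a statement about holomorphic line subbundles and simply does not apply to $\tilde{\sigma}_2$; \autoref{rmk:injectivity} compares two holomorphic sections of one and the same holomorphic Riccati foliation, which is not the configuration here. The paper's resolution is different and is the missing idea: after shrinking $W$ so that $\tilde{\sigma}_2$ lies in a tubular neighborhood of $\sigma_1$ adapted to $\mathcal{F}_1$, the holonomy of $\mathcal{F}_1$ gives a diffeomorphism $\operatorname{hol}:\tilde{\sigma}_2\to\sigma_1$, and the composite $f=\pi_1\circ\operatorname{hol}\circ H\circ\sigma_2:C_2\to C_1$ is an isomorphism of the (marked) projective structures, because holonomy along the leaves preserves the projective structure induced on transversals. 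One never concludes $\sigma_1=\sigma_2$ or $C_1=C_2$; one produces an equivalence of marked structures, which is all that is needed for $x_1=x_2$ in the moduli space. Without this holonomy-transport argument your proof of local injectivity is incomplete.
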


\begin{proof}
	We are going to show that the monodromy map is locally injective.  Since the holomorphy of the monodromy map have already been established, %
	the conclusion will follow immediately by invariance of domain. The fact that the source and the range of the monodromy map have equal dimensions follows from \autoref{prop:fix-residues} and \autoref{lem:fix-trace}.

	Pick a point $x_0\in\mathcal{P}^\circ(\mathbb{S},\mathbb{M},(\lambda_{-1}^{(i)}))$, and denote by $y_0\in\mathcal{M}(\mathbb{S},\mathbb{M},(\lambda_{-1}^{(i)}))=\bar{\mathcal{R}}^*(\mathbb{S},\mathbb{M},(\lambda_{-1}^{(i)}))\times T$ its image by $e$. Let $V=I\times U$ be an open neighborhood of $y_0$, such that $I\subset\bar{\mathcal{R}}^*(\mathbb{S},\mathbb{M},(\lambda_{-1}^{(i)}))$ is an open ball sufficiently small to carry the universal analytic family of singular Riccati foliation constructed in \autoref{sec:construction-local-family}, and such that $U\in T$ is sufficiently small to get, by local constancy of isomonodromic deformations and Ehresmann transversality as in \autoref{localproductstructure}, a $\mathcal{C}^\infty$ product structure along isomonodromic deformations of elements in $V$.

	Since the map $e$ is continuous, there exists an open neighborhood $W$ of $x_0$ such that $e(W)\subset V$. Up to shrinking $W$ to a smaller open neighborhood of $x_0$, we know that it carries a family of meromorphic $\operatorname{PGL}(2,\mathbb{C})$-opers in the sense of \autoref{family-of-opers}. This is, in particular, a family of singular Riccati foliations. By injectivity of $e$ and the universal property of the analytic family of singular Riccati foliations over $V$, it induces an injection at the level of the families.

	We are going to show that the restriction of the monodromy map to $W$ is one-to-one. Pick two points $x_1,x_2\in W$, and suppose that their images by the monodromy map are equal. Denote $z:=\operatorname{Mon}_{S,(n_i),(\lambda_{-1}^{(i)})}(x_1)=\operatorname{Mon}_{S,(n_i),(\lambda_{-1}^{(i)})}(x_2)$, $y_1=e(x_1)$, $y_2=e(x_2)$ and finally $t_0$, $t_1$ and $t_2$ the projection of $y_0$, $y_1$ and $y_2$, respectively. As depicted on \autoref{fig:transversality}, the injectivity of the monodromy map at $x_0$ arise from the transversality of the image of $W$ by $e$ with respect to the fibers of the projection to the first factor, i.e. with respect to the monodromy map of singular Riccati foliations.
	\begin{figure}[H]
		\centering
		\def\svgwidth{1\textwidth}
		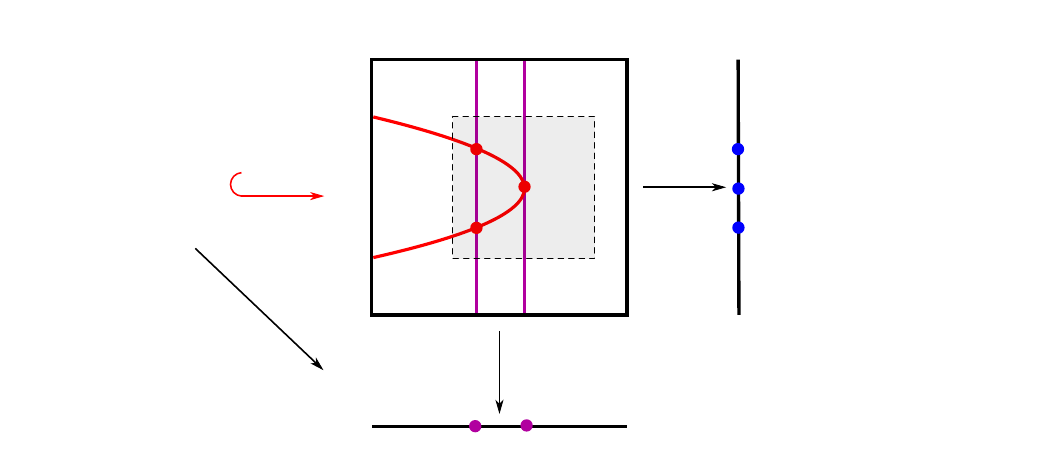
		\caption{This situation where $y_1$ and $y_2$ are distinct is impossible: the image of $\mathcal{P}^\circ(\mathbb{S},\mathbb{M},(\lambda_{-1}^{(i)}))$ by $e$ is transverse to the fibers of the projection onto $\bar{\mathcal{R}}^*(\mathbb{S},\mathbb{M},(\lambda_{-1}^{(i)}))$.}
		\label{fig:transversality}
	\end{figure}
	Let $(\pi_i:Q_i\rightarrow C_i,\mathcal{F}_i,\sigma_i)$ be the opers corresponding to $x_i$, for each $i=1,2$, in the family of \autoref{family-of-opers}. They are isomonodromic deformations of one another. Recall that isomonodromic deformations are induced by a codimension one foliation, whose holonomy locally gives a $\mathcal{C}^\infty$-retraction of the Riccati foliations of the family on, say, $(\pi_1:Q_1\rightarrow C_1,\mathcal{F}_1)$. This is the content of \autoref{localproductstructure}, depicted on \autoref{fig:product-structure}.
	\begin{figure}[H]
		\centering
		\def\svgwidth{1\textwidth}
\begingroup%
  \makeatletter%
  \providecommand\color[2][]{%
    \errmessage{(Inkscape) Color is used for the text in Inkscape, but the package 'color.sty' is not loaded}%
    \renewcommand\color[2][]{}%
  }%
  \providecommand\transparent[1]{%
    \errmessage{(Inkscape) Transparency is used (non-zero) for the text in Inkscape, but the package 'transparent.sty' is not loaded}%
    \renewcommand\transparent[1]{}%
  }%
  \providecommand\rotatebox[2]{#2}%
  \newcommand*\fsize{\dimexpr\f@size pt\relax}%
  \newcommand*\lineheight[1]{\fontsize{\fsize}{#1\fsize}\selectfont}%
  \ifx\svgwidth\undefined%
    \setlength{\unitlength}{547.7814778bp}%
    \ifx\svgscale\undefined%
      \relax%
    \else%
      \setlength{\unitlength}{\unitlength * \real{\svgscale}}%
    \fi%
  \else%
    \setlength{\unitlength}{\svgwidth}%
  \fi%
  \global\let\svgwidth\undefined%
  \global\let\svgscale\undefined%
  \makeatother%
  \begin{picture}(1,0.60070075)%
    \lineheight{1}%
    \setlength\tabcolsep{0pt}%
    \put(0,0){\includegraphics[width=\unitlength,page=1]{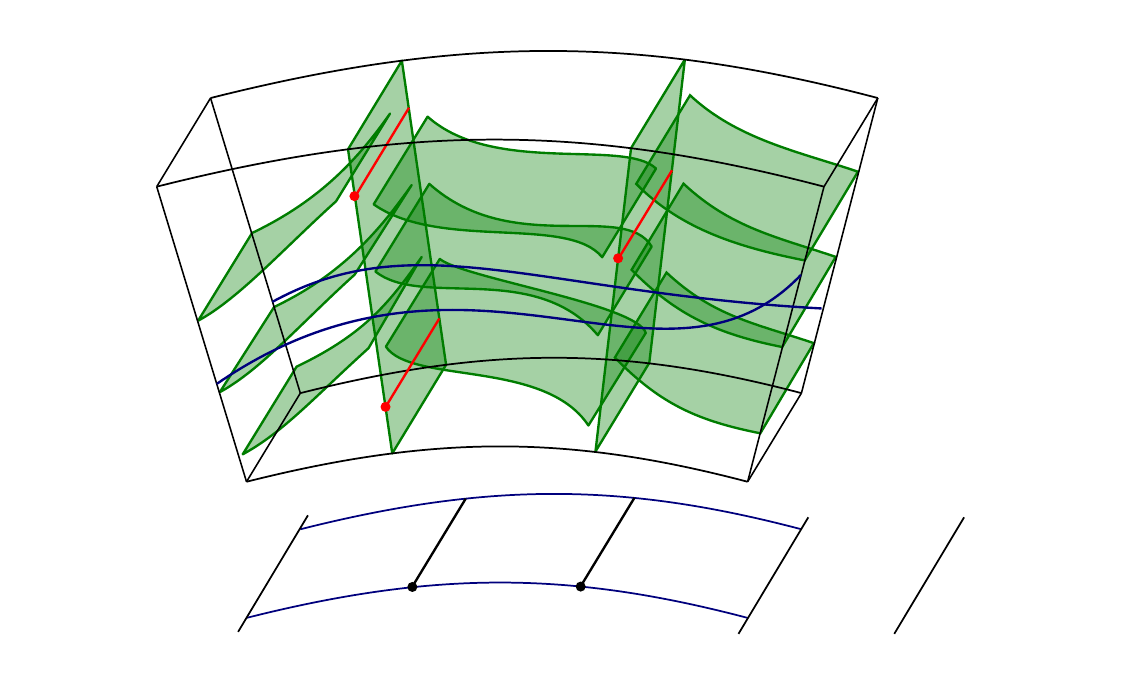}}%
    \put(0.74991271,0.36629977){\color[rgb]{0,0,0.49019608}\makebox(0,0)[lt]{\lineheight{1.25}\smash{\begin{tabular}[t]{l}$\sigma_1$\end{tabular}}}}%
    \put(0.43197648,0.18877463){\color[rgb]{0,0,0}\rotatebox{-90}{\makebox(0,0)[lt]{\lineheight{1.25}\smash{\begin{tabular}[t]{l}$\longrightarrow$\end{tabular}}}}}%
    \put(0.71545241,0.08872803){\color[rgb]{0,0,0}\makebox(0,0)[lt]{\lineheight{1.25}\smash{\begin{tabular}[t]{l}$\longrightarrow$\end{tabular}}}}%
    \put(0.72951153,0.31176732){\color[rgb]{0,0,0.49019608}\makebox(0,0)[lt]{\lineheight{1.25}\smash{\begin{tabular}[t]{l}$\sigma_2$\end{tabular}}}}%
    \put(0.80917303,0.03696733){\color[rgb]{0,0,0}\makebox(0,0)[lt]{\lineheight{1.25}\smash{\begin{tabular}[t]{l}$\{z\}\times U$\end{tabular}}}}%
    \put(0.67043812,0.0382416){\color[rgb]{0,0,0}\makebox(0,0)[lt]{\lineheight{1.25}\smash{\begin{tabular}[t]{l}$C_1$\end{tabular}}}}%
    \put(0.70828995,0.14737777){\color[rgb]{0,0,0}\makebox(0,0)[lt]{\lineheight{1.25}\smash{\begin{tabular}[t]{l}$C_2$\end{tabular}}}}%
  \end{picture}%
\endgroup%

		\caption{The local product structure.}
		\label{fig:product-structure}
	\end{figure}
	Hence, the two foliated bundles $(\pi_i:Q_i\rightarrow C_i,\mathcal{F}_i)$, $i=1,2$ are isomorphic in the real differentiable sense. Denote by $\tilde{\sigma}_2$ the image in $Q_1$ of $\sigma_2$ by this isomorphism $H$. Up to shrinking $W$ again, we can assume that $\tilde{\sigma}_2$ is close to $\sigma_1$ in the sense that it is contained in a tubular neighborhood of the latter section, adapted to the foliation (cf. \autoref{fig:tubular-neighborhood}). This is a consequence of the continuity of the map of the family of meromorphic opers into the universal family of singular Riccati equations.

	\begin{figure}[H]
		\centering
		\def\svgwidth{0.6\textwidth}
\begingroup%
  \makeatletter%
  \providecommand\color[2][]{%
    \errmessage{(Inkscape) Color is used for the text in Inkscape, but the package 'color.sty' is not loaded}%
    \renewcommand\color[2][]{}%
  }%
  \providecommand\transparent[1]{%
    \errmessage{(Inkscape) Transparency is used (non-zero) for the text in Inkscape, but the package 'transparent.sty' is not loaded}%
    \renewcommand\transparent[1]{}%
  }%
  \providecommand\rotatebox[2]{#2}%
  \newcommand*\fsize{\dimexpr\f@size pt\relax}%
  \newcommand*\lineheight[1]{\fontsize{\fsize}{#1\fsize}\selectfont}%
  \ifx\svgwidth\undefined%
    \setlength{\unitlength}{491.08856441bp}%
    \ifx\svgscale\undefined%
      \relax%
    \else%
      \setlength{\unitlength}{\unitlength * \real{\svgscale}}%
    \fi%
  \else%
    \setlength{\unitlength}{\svgwidth}%
  \fi%
  \global\let\svgwidth\undefined%
  \global\let\svgscale\undefined%
  \makeatother%
  \begin{picture}(1,0.65273116)%
    \lineheight{1}%
    \setlength\tabcolsep{0pt}%
    \put(0,0){\includegraphics[width=\unitlength,page=1]{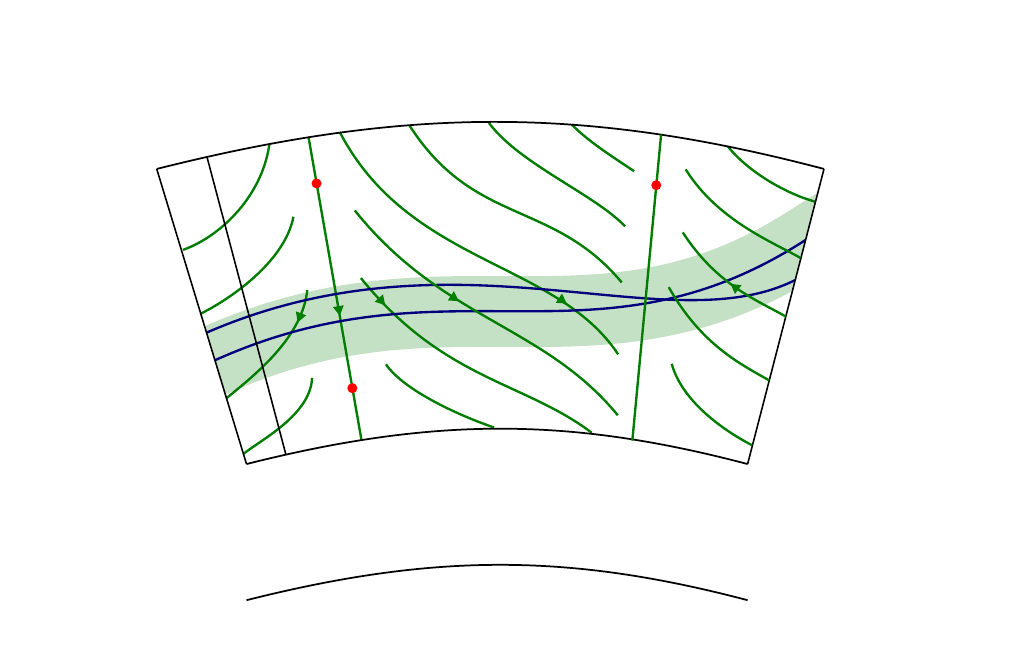}}%
    \put(0.75848321,0.08571242){\color[rgb]{0,0,0}\makebox(0,0)[lt]{\lineheight{1.25}\smash{\begin{tabular}[t]{l}$S$\end{tabular}}}}%
    \put(0.78666075,0.35053564){\color[rgb]{0,0,0.49019608}\makebox(0,0)[lt]{\lineheight{1.25}\smash{\begin{tabular}[t]{l}$\tilde{\sigma}_2$\end{tabular}}}}%
    \put(0.4818453,0.21601786){\color[rgb]{0,0,0}\rotatebox{-90}{\makebox(0,0)[lt]{\lineheight{1.25}\smash{\begin{tabular}[t]{l}$\longrightarrow$\end{tabular}}}}}%
    \put(0.80520098,0.41834183){\color[rgb]{0,0,0.49019608}\makebox(0,0)[lt]{\lineheight{1.25}\smash{\begin{tabular}[t]{l}$\sigma_1$\end{tabular}}}}%
  \end{picture}%
\endgroup%

		\caption{A tubular neighborhood of $\sigma_1$, containing $\tilde{\sigma}_2$.}
		\label{fig:tubular-neighborhood}
	\end{figure}
	
	The holonomy of the foliation $\mathcal{F}_1$ defines a $\mathcal{C}^\infty$-diffeomorphism \[\operatorname{hol}:\tilde{\sigma}_2\longrightarrow\sigma_1.\]

	The $\mathcal{C}^\infty$-diffeomorphism \[f:C_2\longrightarrow C_1\] defined by $f=\pi_1\circ\operatorname{hol}\circ H\circ\sigma_2$ is an isomorphism of projective structures. In conclusion, $x_1=x_1$.
\end{proof}

\clearpage
\bibliography{bibli.bib}
\bibliographystyle{alphaurl}

\end{document}